\documentclass[10pt]{article}
\pdfoutput=1
\usepackage{amsmath,amsthm,amssymb}
\usepackage[T1]{fontenc}
\usepackage[utf8x]{inputenc}
\usepackage[dvipdf]{graphicx}

\usepackage{color}
\usepackage{graphics}
\usepackage{epstopdf}
\epstopdfsetup{outdir=./}
\usepackage{enumerate}
\usepackage{enumitem}
\usepackage[normalem]{ulem}
\usepackage{amsmath}
\usepackage{cancel}
\write18{RoughHeston}
\definecolor{db}{RGB}{0, 0, 130}
\usepackage[colorlinks=true,citecolor=red,linkcolor=db,urlcolor=blue,pdfstartview=FitH]{hyperref}

\usepackage[numbers]{natbib}

\definecolor{rp}{rgb}{0.25, 0, 0.75}
\definecolor{dg}{rgb}{0, 0.5, 0}

\newcommand{\R}{\mathbb{R}}

\newcommand{\N}{\mathbb{N}}
\newcommand{\EE}{\mathbb{E}}

\newcommand{\ens}{\eta_n(s)}
\newcommand{\ent}{\eta_n(t)}
\newcommand{\enu}{\eta_n(u)}
\newcommand{\enr}{\eta_n(r)}

\makeatletter
\newcommand{\customlabel}[2]{%
   \protected@write \@auxout {}{\string\newlabel {#1}{{#2}{\thepage}{#2}{#1}{}}}%
   \hypertarget{#1}{#2\hspace{-0.13cm}}
}
\makeatother

\textheight = 23 cm
\textwidth = 16 cm
\footskip = 0,5 cm
\topmargin = 0 cm	
\headheight = 0 cm
\headsep =0 cm
\oddsidemargin= 0 cm
\evensidemargin = 0 cm
\marginparwidth = 0 cm
\marginparsep = 0 cm \topskip = 0 cm

\newtheorem{theorem}{Theorem}[section]

\newtheorem{definition}{Definition}[section]

\newtheorem{corollary}[definition]{Corollary}

\newtheorem{assumption}[theorem]{Assumption}
\newtheorem{lemma}[definition]{Lemma}

\newtheorem{proposition}[definition]{Proposition}
\newtheorem{remark}[definition]{Remark}

\def\E{\mathbb{E}}

\def\Om{\Omega}
\def\Fc{\mathcal{F}}
\def\F{\mathbb{F}}
\def\P{\mathbb{P}}
\def\Xb{\overline{X}}
\def\eps{\varepsilon}

\def\xbf{\mathbf{x}}
\def\zbf{\mathbf{z}}

\def\Lc{\mathcal{L}}

\author{Alexandre Richard\footnote{Universit\'e Paris-Saclay, CentraleSup\'elec, MICS and CNRS FR-3487, France. \texttt{alexandre.richard@centralesupelec.fr}.}
\and Xiaolu Tan\footnote{Department of Mathematics, The Chinese University of Hong Kong. \texttt{xiaolu.tan@cuhk.edu.hk}, research supported by CUHK startup grant and CUHK Faculty of Science Direct Grant 2020-2021.}
\and Fan Yang\footnote{Department of Mathematics, The Chinese University of Hong Kong. \texttt{fyang@math.cuhk.edu.hk}.}
}


\title{On the discrete-time simulation of the rough Heston model}

\begin{document}

\maketitle

\begin{abstract}
	We study Euler-type discrete-time schemes for the rough Heston model,  
	which can be described by a stochastic Volterra equation (with non-Lipschtiz coefficient functions), or by an equivalent integrated variance formulation.
	Using weak convergence techniques, we prove that the limits of the discrete-time schemes are solution to some modified Volterra equations. 
	Such modified equations are then proved to share the same unique solution as the initial equations, which implies the convergence of the discrete-time schemes.
	Numerical examples are also provided in order to evaluate different options' prices under the rough Heston model.

	\medskip

	\noindent \textbf{Key words:} Rough Heston model, stochastic Volterra equations,  Euler scheme,  Monte-Carlo method.

	\medskip

	\noindent\textbf{MSC2020 subject classification:}  60H20, 45D05, 91G60.

\end{abstract}

\section{Introduction}

	The modelling of rough volatilities is an important subject in mathematical finance,
	especially since the paper \cite{gatheral2018volatility} which brought statistical evidence of such behaviour in the financial markets (see also the pioneering works \cite{comte1998long, alos2007short, fukasawa2011asymptotic, comte2012affine} in this direction).
	Rough volatility models have the advantage to better exhibit the roughness of the volatility time series, to reproduce the shape and the dynamic of the implied volatility surface, etc.
	Many of the rough volatility models consist in replacing the Brownian motions in the classical models by fractional Brownian motions, which leads to some SDEs or more general stochastic system driven by fractional Brownian motions, see e.g. \cite{comte1998long,  comte2012affine,  bayer2016pricing, akahori2017probability, gatheral2018volatility, araneda2020fractional} among many others.
	Another important way to model the rough volatility process is to use a stochastic Volterra equation, 
	such as the  rough Heston model introduced by El Euch and Rosenbaum \cite{el2019characteristic}:
	\begin{equation}\label{eq:RoughHeston_intro}
		S_t = S_0+\int_0^t S_s \sqrt{V_s} ~d W^1_s ,
		~~~~
		V_t=V_0+\int_0^t K(t-s) \Big( (\theta -\lambda V_s)~ds+ \nu\, \sqrt{V_s}~dW^2_s \Big) ,
	\end{equation}
	where $(W^1, W^2)$ are two correlated Brownian motions with some correlation constant $\rho \in (-1, 1)$, and $K(t) := C t^{H-\frac{1}{2}}$ is the kernel function with some Hurst parameter $H\in (0,\frac{1}{2})$.
	Above, $S$ is the risky asset price process under the risk neutral probability, and $V$ represents the volatility process.
	Further, by considering the integrated processes: 
	$$
		X_t := \int_0^t V_s~ds,
		~~~
		M^1_t := \int_0^t \sqrt{V_s} dW^1_s,
		~~~
		M^2_t := \int_0^t \sqrt{V_s} dW^2_s,
	$$
	the rough Heston model \eqref{eq:RoughHeston_intro} is shown to be equivalent (see e.g. \citet{abi2021weak}) to the following system
	\begin{equation}\label{eq:HyperRough_intro}
			S_t = S_0+\int_0^t  S_s~d M^1_{s},
			~~~~
			X_t = V_0 t+\int_0^t K(t-s) \big(\theta s -\lambda X_s+\nu M^2_s \big)~ds,
	\end{equation}
	where $(M^1,M^2)$ are two continuous martingales with quadratic variation $\langle M^1 \rangle =\langle M^2 \rangle= X$ and quadratic covariation $\langle M^1, M^2 \rangle = \rho X$.
	Based on the above formulations in \eqref{eq:RoughHeston_intro} and \eqref{eq:HyperRough_intro},
	 the super-rough Heston model \cite{dandapani2021quadratic} and the hyper-rough Heston model \cite{abi2021weak,jusselin2020no} have also been developed recently.

	\vspace{0.5em}

	In the rough volatility literature, an important topic is to find a good approximation method (e.g. in order to evaluate option prices), whenever a closed formula is not available.
	For different models, some approximation and asymptotic methods have been introduced and studied, see e.g. \cite{alos2007short, fukasawa2011asymptotic, forde2017asymptotics, horvath2017functional, guennoun2018asymptotic,  abi2019multifactor, friz2020short, horvath2020volatility,  forde2021small}, etc.
	In the meantime, for affine models such as \eqref{eq:RoughHeston_intro} and \eqref{eq:HyperRough_intro}, one can in fact 
	obtain the marginal distributions of $(S_t, V_t)$ or $(S_t, X_t)$ at any time $t \ge 0$,
	by computing their characteristic function via Riccati-type systems, see e.g. \cite{alos2014closed, abi2021weak, abi2019affine, el2019characteristic}.
	This allows in particular to compute efficiently the European call/put options prices under the affine rough volatility models.
	Nevertheless, as the path distribution of the process $(S,V)$ or $(S,X)$ is still unknown, one cannot compute prices of path-dependent options this way.
	In this case, a natural and simple solution would be the Monte-Carlo method based on a discrete-time scheme.

	\vspace{0.5em}
	
	The main objective of the paper is to study the Euler-type discrete-time scheme for the rough Heston model in both formulations \eqref{eq:RoughHeston_intro} and \eqref{eq:HyperRough_intro}, and to provide a convergence result.
	We will stay in a more general rough Heston setting, i.e. the kernel function $K(t)$ is not necessarily of the form $C t^{H-\frac{1}{2}}$.
	Throughout the paper, we would like to call \eqref{eq:RoughHeston_intro} the rough Heston model in the stochastic Volterra equation formulation, and \eqref{eq:HyperRough_intro} the rough Heston model in the integrated variance formulation (or simply integrated-rough Heston model).

	\vspace{0.5em}

	Notice that Equation \eqref{eq:RoughHeston_intro} satisfied by $V$ is a standard stochastic Volterra equation (but with non-Lipschitz coefficient).
	For stochastic Volterra equations with Lipschitz coefficient equations, the discrete-time schemes such as Euler scheme and/or Milstein scheme have been studied in \cite{zhang2008euler, richard2020discrete,  AlfonsiKebaier,10.1093/imanum/drab047}, where (sharp) strong convergence rates have been obtained.
	Nevertheless, because of the square root term $\sqrt{V_s}$, the coefficient function in  \eqref{eq:RoughHeston_intro} is non-Lipschitz. Hence the techniques and results in the aforementioned papers cannot be applied to obtain a convergence result.

	\vspace{0.5em}

	We will apply weak convergence techniques to provide a convergence proof of the discrete-time scheme for both Equations \eqref{eq:RoughHeston_intro} and \eqref{eq:HyperRough_intro}.
	The idea is very classical in the literature on SDEs, see e.g. \cite{karatzas1998brownian,  jacod2013limit}.
	First, one shows that the sequence of discrete time numerical solutions is tight, then that any limit of the sequence is solution to the continuous-time equation.
	Next, it is enough to show that the limit continuous-time equation has a unique weak solution, 
	so that the numerical solution converges weakly to the unique solution of the limit equation.
	In the context of the rough Heston model, such weak convergence techniques have already been used in \cite{abi2021weak, abi2019weak, abi2019affine}, in particular to show the existence of weak solutions of the related equations.
	
	\vspace{0.5em}
	
	However, for the analysis of the discrete-time numerical solution, it is not straightforward to apply their techniques and results.
	First, they usually consider sequences of continuous-time processes $(V^n)_{n \ge 1}$ and $(X^n)_{n \ge 1}$ which are solutions of equations with smoother coefficients, where generalized Gr\"onwall lemma applies under conditions on the kernel $K$.
	For the discrete-time numerical solution, because of discretization of the kernel function $K$, 
	it is not trivial to formulate explicit conditions on $K$ to have the discrete-time generalized Gr\"onwall lemma.
	We therefore need to develop different techniques to estimate the (uniform) moment estimates to obtain the tightness of discrete-time solutions.
	Next, their approximating processes are already positive (resp. non-decreasing), so that the limit process $V$ (resp. $X$) is automatically positive (resp. non-decreasing).
	In the discrete-time setting, the numerical solution $V^n$ may not always be positive, we hence need  to take its positive part $(V^n)_+$ before taking the square root, i.e. $\sqrt{(V^n)_+}$.
	As for the numerical solution $X^n$, we need to replace $X^n_t$ by $\Xb^n_t := \max_{0 \le s \le t} X^n_s$ to make it non-decreasing, so that it can be the quadratic variation of some martingales.
	Consequently, it turns out that the limit of the numerical solutions is solution to some modified equations.
	We will then need to show that the limit process $V$ is positive and $X$ is non-decreasing, 
	and the modified equation shares the same unique weak solution as the initial equations.
	For this, we will adapt the ideas from \cite{abi2019markovian} to our context.
	This allows us to obtain weak convergence results of the discrete-time numerical solutions.
	Finally, we also provide some numerical simulation examples to evaluate (path-dependent) options' prices in the rough Heston model.

	\vspace{0.5em}

	The rest of the paper is organised as follows. 
	In Section \ref{sec:results}, we state the two equivalent formulations of the rough Heston model with more details, 
	and present the corresponding discrete-time schemes as well as the convergence results. 
	In Section \ref{sec:NumericalExamples}, we provide some numerical examples in order to evaluate the option prices in the rough Heston model. 
	Proofs of the main (weak) convergence results in Theorems \ref{thm:CvgRoughHeston} and \ref{thm:CvgIntegVariance} are provided in Section \ref{sec:proofs}.

\section{Discrete-time simulation of the rough Heston model}
\label{sec:results}

	We will first restate the two equivalent formulations \eqref{eq:RoughHeston_intro} and \eqref{eq:HyperRough_intro} of the rough Heston model with more precise definitions.
	Based on the two formulations, we introduce the corresponding Euler-type schemes, which are defined on a discrete grid.
	Let us consider a sequence $(\pi_n)_{n \ge 1}$ of discrete-time grid on $[0,T]$, 
	with $\pi_n=\{0=t_0^n <t_1^n <t_2^n <...<t_n^n=T\}$ for each $n \ge 1$.
	For each $n \ge 1$, we define $\ens := t^n_k$ for $s \in [t^n_k, t^n_{k+1})$, $k=0, \cdots, n-1$, and $\eta_n(T) := T$.
	
	\vspace{0.5em}

	To provide the convergence results, we will make some assumptions on the kernel function $K: [0,T]  \longrightarrow \R$ used in the model.
	For a (measurable) kernel function $K: [0,T] \longrightarrow \R$, let us recall that the resolvent of the first kind of $K$ is a finite measure $L$ on $[0,T]$ such that 
	\begin{equation} \label{eq:def_resolvent}
		(K*L) (t) 
		:= \int_{[0,t]} K(t-s) L(ds) = 1,
		~~\mbox{for all}~t \in (0,T].
	\end{equation}
	Notice that we are in a one-dimensional context, so that the above definition is much simpler than the general one 
	(see  \cite{gripenberg1990volterra}).

	\begin{assumption} \label{assum:main}
		The function $K \in L^2([0,T])$ is nonnegative, not identically $0$, non-increasing and continuous. Its resolvent of the first kind $L$ is nonnegative and such that $s \longmapsto L([s, s+t])$ is non-increasing for all $t \ge 0$.
		Moreover, there exist constants $C>0$ and $H >0$ such that, for all $0 \le t \le T$, and $n \ge 1$, $\delta\in(0, T-t]$, 
		one has	
		\begin{equation} \label{eq:A2}
			\int_t^{t + \delta}  \big| K(t + \delta - \ens) \big|^2~ds
			~\leq~
			C \delta^{2 H },
		\end{equation}
		and
		\begin{equation} \label{eq:A3}
			\int_0^{t} \big| K(t+\delta-\ens)-K(t-\ens) \big|^2~ds
			~\leq~
			C \delta^{2 H}.
		\end{equation}
	\end{assumption}
	
	\begin{remark}
		$\mathrm{(i)}$ Let $K(t) := C t^{H-\frac{1}{2}}$ with Hurst constant  $H \in (0, \frac{1}{2})$ and some constant $C > 0$.
		Then  \eqref{eq:A2} and \eqref{eq:A3} can be checked by direct computation, while the resolvent of $K$ is $L(dt) = C_{H}\, t^{-(H+\frac{1}{2})}\, dt$ for some $C_{H}>0$, and thus satisfies Assumption \ref{assum:main}.
		
%
%
%

		\vspace{0.5em}
		
		\noindent $\mathrm{(ii)}$ Let $K(t) := K_1(t) K_2(t)$, where $K_1(t) = C \exp (-\beta t)$ and $K_2(t) = t^{H-\frac12}$ for some constants $C > 0$, $\beta > 0$ and $H \in (0, \frac12)$.
		One can check by direct computation that $K$ satisfies \eqref{eq:A2} and \eqref{eq:A3}.
		Moreover,  the resolvent of $K$ is explicitely given in \cite[Table 1]{abi2019affine} and satisfies Assumption \ref{assum:main}.
		
		\vspace{0.5em}
	
		\noindent $\mathrm{(iii)}$ More generally, if $K$ is a  completely monotone function (as defined in \cite[Section 5.2]{gripenberg1990volterra}) and is not identically $0$, then by \cite[Theorem 5.5.4]{gripenberg1990volterra} it admits a resolvent of the first kind $L$ which is nonnegative and is such that $s \longmapsto L([s, s+t])$ is non-increasing for all $t \ge 0$. So if $K$ also verifies \eqref{eq:A2} and \eqref{eq:A3}, it will satisfy Assumption \ref{assum:main}. This is for instance the case of $K_{1}$ and $K_{2}$ above, as well as for instance $K_{3}(t) = \log(1+\frac{1}{t+1})$. In addition, any linear combination and multiplication of completely monotone function is still a completely monotone function. Hence Assumption \ref{assum:main} covers a wide range of kernels. 

	\end{remark}

\subsection{The rough Heston model in two equivalent formulations}

	Let $\rho \in [-1, 1]$ be a constant, $S_0$, $V_0$, $\theta$, $\lambda$ and $\nu$ be all strictly positive constants.
	The first formulation of the rough Heston model is given by
	\begin{equation} \label{eq:RoughHeston}
	\begin{split}
		S_t&=S_0+\int_0^t S_s\, \sqrt{V_s}~d \big( \rho\, W_s+\sqrt{1-\rho^2}\, W_s^{\bot} \big),\\
		V_t&=V_0+\int_0^t K(t-s) \Big( \big( \theta-\lambda V_s \big)~ds + \nu \sqrt{V_s}~dW_s \Big),
	\end{split}
	\end{equation}
	where $(W, W^{\bot})$ are two independent Brownian motions.
	Namely, $S$ represents the risky asset price under the risk-neutral probability, $\sqrt{V_t}$ is the volatility at time $t \ge 0$.
	We give immediately a precise definition of weak solution to \eqref{eq:RoughHeston}.

	\begin{definition}\label{def:WeakLpSolutionV}
		We say that Equation \eqref{eq:RoughHeston} has a weak solution if there exists a complete filtered probability space $(\Omega,\mathcal{F},(\mathcal{F}_t)_{t\in [0,T]},\mathbb{P})$ equipped with two independent Brownian motion $W,~W^{\bot}$, and a pair of $\mathbb{R}_+$-valued adapted continuous processes $(V,S)$ such that \eqref{eq:RoughHeston} is satisfied a.s. for all $t \ge 0$.
	\end{definition}

	\begin{remark} \label{rem:uniqueness_RoughHeston}
		Under Assumption \ref{assum:main}, the existence and uniqueness of the weak solution to \eqref{eq:RoughHeston} can be found in e.g. \cite{abi2019affine}.
		Indeed, let $n\longrightarrow \infty$ in \eqref{eq:A2}-\eqref{eq:A3}, it follows by Fatou's lemma that
		$$
			\int_t^{t+\delta} \big| K(t+\delta -s) \big|^2 ds + \int_0^t \big| K(t+\delta -s) - K(t-s) \big|^2 ds \le 2 C \delta^{2H},
		$$
		which corresponds to the conditions required in \cite{abi2019affine}.
		
	\end{remark}

	The process $(\sqrt{V_t})_{t \ge 0}$ is called the volatility process. 
	Let us consider the integrated variance process $X$ given by 
	$$
		X_t=\int_0^t V_s ~ds,~~t \ge 0.
	$$
	As observed in \cite{abi2021weak}, one can reformulate \eqref{eq:RoughHeston} into an equivalent system on $(X, S)$.
	Namely, by applying the stochastic Fubini theorem (see e.g. \cite[p.175]{RevuzYor}),
	the processes $(S, X)$ satisfy the following stochastic Volterra equation
	\begin{align}\label{eq:IntegVarHeston}
	\begin{split}
		S_t&=S_0+\int_0^t S_s~d \big(\rho M_s+\sqrt{1-\rho^2}M_s^{\bot} \big),\\
		X_t&=V_0 t +\int_0^t K(t-s)\big(\theta s -\lambda X_s+\nu M_s\big)~ds,
	\end{split}
	\end{align}
	where $M_s,~M^{\bot}_s$ are two orthogonal continuous martingales with quadratic variation $\langle M \rangle  = \langle M^{\bot}\rangle =X$, and initial condition $M_0=M^{\bot}_0 = 0$.
	We will call it the integrated variance formulation (or simply integrated-rough Heston model).
	Following \cite{abi2021weak}, let us introduce the definition of weak solution of Equation \eqref{eq:IntegVarHeston}.

	\begin{definition}\label{def:WeakSolX}
		We say that Equation \eqref{eq:IntegVarHeston} has a weak solution if there exists a filtered probability space $(\Omega,\mathcal{F},(\mathcal{F}_t)_{t\geq 0},\mathbb{P})$ supporting a pair of orthogonal continuous martingales $(M,M^{\bot})$,
		a non-decreasing, non-negative, continuous and adapted process $X$ and a non-negative continuous and adapted process $S$, 
		such that \eqref{eq:IntegVarHeston} holds a.s.
	\end{definition}

	\begin{remark} \label{eq:uniqueness_VarHeston}
		Under Assumption \ref{assum:main}, the existence and uniqueness of the weak solution to Equation \eqref{eq:IntegVarHeston} is proved in \cite{abi2021weak}.
		In fact, the context in \cite{abi2021weak} covers the case with $L^1$ kernel functions.
		For technical reasons, in particular the equivalence results in Section \ref{subsec:equivalence}, we will stay in the context of $L^2$ kernel functions $K$.
	\end{remark}

	\begin{remark} \label{rem:log_transform}
		Let us define $Y_t := \log(S_t)$, then it is clear that one has
		$$
			Y_t = Y_0 - \int_0^t \frac12 V_s ds + \int_0^t \sqrt{V_s}~d \big( \rho\, W_s+\sqrt{1-\rho^2}\, W_s^{\bot} \big)
			~\mbox{in the formulation \eqref{eq:RoughHeston}},
		$$
		and
		$$
			Y_t = Y_0 - \frac12 X_t +  \rho M_t + \sqrt{1-\rho^2}M^{\bot}_t
			~\mbox{in the formulation \eqref{eq:IntegVarHeston}}.
		$$
	\end{remark}

\subsection{The discrete-time schemes and their convergence}
\label{subsec:Schemes}

	Recall that $(\pi_n)_{n \ge 1}$ is a sequence of discrete-time grids on $[0,T]$, 
	with $\pi_n=\{0=t_0^n <t_1^n <t_2^n <...<t_n^n=T\}$ for each $n \ge 1$.
	Let us denote $\delta_n := |\pi_n| := \max\limits_{0\leq k\leq n-1}\Delta t^n_{k+1}$, 
	with $\Delta t^n_{k+1} := t^n_{k+1} - t^n_k$ and $\ens:=t_k^n$, 
	for $s\in [t_k^n,t_{k+1}^n)$, $k = 0, \cdots, n-1$.
	We will simulate $(S, V)$ of \eqref{eq:RoughHeston} and $(S, X)$ of \eqref{eq:IntegVarHeston} on the discrete-time grid $\pi_n$.
	More precisely, in view of Remark \ref{rem:log_transform}, we would like to simulate the process $(Y, V)$ in place of $(S, V)$ in \eqref{eq:RoughHeston}, 
	and to simulate $(Y, X)$ in place of $(S, X)$ in \eqref{eq:IntegVarHeston}.
	As observed in the Black-Scholes model, the simulation of $Y$ permits to avoid the time discretization of the process $S$ in the dynamics of $S$, and one can expect a better performance for its simulation.

	\vspace{0.5em}

	Let us first give the Euler-type scheme for \eqref{eq:RoughHeston}.
	Notice that the process $V$ is $\R_+$-valued in the continuous-time setting, but it could become negative in a discrete-time simulation.
	For this reason, we use $(V_t)_+ := \max(V_t, 0)$ in the square root term $\sqrt{(V_t)_+}$ to define the discrete-time scheme.
	For the discrete grid $\pi_n$, let us write $t^n_k$ as $t_k$ for simplicity,
	and denote by $(S^n, V^n) = (S^n_{t_k}, V^n_{t_k})_{k=0,1, \cdots, n}$ the corresponding numerical solution, which is given as follows: 
	$S^n_{t_k} := \exp(Y^n_{t_k})$, $k=0, 1, \cdots, n$, and
	\begin{equation}\label{eq:SchemeRoughHeston}
	\begin{split}
		Y_{t_k}^n 
		&= 
		Y_{0} 
		+
		\sum\limits_{i=0}^{k-1} \Big( 
			- \frac{1}{2}(V^n_{t_i})_+\, \Delta t^n_{i+1}
			+ \rho\sqrt{(V^n_{t_i})_+} \big(W_{t_{i+1}}-W_{t_i} \big)
			+ \sqrt{1-\rho^2}\sqrt{(V^n_{t_i})_+} \big( W^{\bot}_{t_{i+1}}-W^{\bot}_{t_i} \big) \Big),\\
		V^n_{t_k}
		&=
		V_0
		+
		\sum_{i=0}^{k-1}\Big( 
			K(t_k-t_i)\big(\theta -\lambda (V^n_{t_i})_+\big) \Delta t^n_{i+1} 
			+
			K(t_k-t_i)\nu \sqrt{(V^n_{t_i})_+} \big( W_{t_{i+1}}-W_{t_i} \big)\Big).\\
	\end{split}
	\end{equation}

	For the process $(S^n, V^n) = (S^n_{t_k}, V^n_{t_k})_{k=0,1, \cdots, n}$ defined on the discrete-time grid $\pi_n$,
	one can use linear interpolation to obtain a continuous time process (with continuous paths), which is still denoted by $(S^n, V^n) = (S^n_t, V^n_t)_{t \in [0,T]}$.
	We can provide a weak convergence result of the numerical solution.
	
	\begin{theorem}\label{thm:CvgRoughHeston}
		\noindent $(i)$ For all $p \ge 2$, there exists a constant $C_p  > 0 $ such that
		$$
			\EE \big[ \big|V^n_t \big|^p \big] \le C_p,
			~~~
			\EE \big[ \big|V_t^n-V_s^n \big|^p \big]
			~\leq~
			C_p (t-s)^{p (H \wedge 1)},
			~~\mbox{for all}~ 0\leq s\leq t\leq T ~\mbox{and}~ n\geq 1.
		$$

		\noindent $(ii)$ The processes $(S^n,V^n)$ defined by \eqref{eq:SchemeRoughHeston} converge weakly to $(S,V)$ in $C([0,T],\R)\times C([0,T],\R)$ as $n \longrightarrow \infty$, where $(S,V)$ is the  unique weak solution to the first formulation \eqref{eq:RoughHeston} of the rough Heston model.
	\end{theorem}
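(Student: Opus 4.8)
The plan is to follow the classical weak-convergence roadmap: (1) establish uniform moment and modulus-of-continuity bounds for the continuous-time interpolated scheme $(S^n, V^n)$, which simultaneously gives part $(i)$ and, via Kolmogorov's tightness criterion, tightness of the laws of $(S^n, V^n)$ in $C([0,T],\R)^2$; (2) extract a weakly convergent subsequence and identify any limit point as a weak solution of a (possibly modified) continuous-time equation; (3) invoke the uniqueness of the weak solution to \eqref{eq:RoughHeston} (Remark \ref{rem:uniqueness_RoughHeston}) to conclude that the whole sequence converges to $(S,V)$.

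For step (1), I would first prove the moment bound $\EE[|V^n_t|^p] \le C_p$ uniformly in $n$ and $t$. The natural approach is to write $V^n_{\eta_n(t)}$ via \eqref{eq:SchemeRoughHeston} as $V_0$ plus a drift term controlled by $\int_0^t K(t-\eta_n(s))(\theta + \lambda (V^n_{\eta_n(s)})_+)\, ds$ and a stochastic term $\int_0^t K(t-\eta_n(s))\nu \sqrt{(V^n_{\eta_n(s)})_+}\, dW_s$; applying the Burkholder–Davis–Gundy inequality to the stochastic integral and Jensen/Hölder to the drift, using $K \in L^2([0,T])$ and \eqref{eq:A2}, one gets an inequality of the form $\EE[|V^n_t|^p] \le C_p + C_p \int_0^t \phi(t,s)\, \EE[|V^n_{\eta_n(s)}|^{p}]\, ds$ for a suitable kernel; since the paper explicitly warns that the discrete-time generalized Grönwall lemma is delicate, I expect this to require a careful iteration argument rather than an off-the-shelf Grönwall inequality — this is the main obstacle. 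Once the moment bound is in hand, the increment estimate $\EE[|V^n_t - V^n_s|^p] \le C_p (t-s)^{p(H\wedge 1)}$ follows by estimating the difference of the two representations, splitting into the ``new mass'' term on $[s,t]$ (controlled by \eqref{eq:A2}) and the ``kernel shift'' term on $[0,s]$ (controlled by \eqref{eq:A3}), together with BDG and the uniform moment bound. For $Y^n$ (hence $S^n = e^{Y^n}$), the analogous bounds are easier since its coefficients are just powers of $(V^n)_+$, which are now under control; the exponential is handled by noting that $Y^n$ has sub-Gaussian increments given the $V^n$-bounds, or by working with $Y^n$ directly and transferring at the end.

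For step (2), fix a weak limit point: by Skorokhod's representation theorem, pass to a common probability space on which $(S^n, V^n) \to (S^\infty, V^\infty)$ a.s.\ uniformly on $[0,T]$, together with the driving Brownian motions $(W, W^\bot)$. One then passes to the limit in \eqref{eq:SchemeRoughHeston}: the Riemann-sum drift converges to $\int_0^t K(t-s)(\theta - \lambda V^\infty_s)\, ds$ by dominated convergence (using continuity of $K$ and the uniform bounds), and the martingale term $\sum K(t_k - t_i)\nu \sqrt{(V^n_{t_i})_+}(W_{t_{i+1}} - W_{t_i})$ converges to $\int_0^t K(t-s)\nu\sqrt{(V^\infty_s)_+}\, dW_s$ by standard stability of stochastic integrals under uniform convergence of integrands of the form $\sqrt{(V^n_{\eta_n(s)})_+}$; the key subtlety is that we only recover $\sqrt{(V^\infty_s)_+}$, not $\sqrt{V^\infty_s}$, so at this stage $V^\infty$ solves the \emph{modified} Volterra equation with $(V^\infty)_+$ in the square root. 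Analogously $Y^\infty$, hence $S^\infty = e^{Y^\infty}$, satisfies the modified version of \eqref{eq:RoughHeston}.

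Finally, in step (3) I would show that the modified equation forces $V^\infty \ge 0$, so $(V^\infty)_+ = V^\infty$ and the modified equation coincides with \eqref{eq:RoughHeston}. The argument (adapting \cite{abi2019markovian}, as the introduction indicates) uses the nonnegative resolvent $L$ of the first kind of $K$: convolving the $V^\infty$-equation with $L(ds)$ and exploiting that $s \mapsto L([s,s+t])$ is non-increasing lets one write $V^\infty_t$ as $V_0$ plus nonnegative terms plus a stochastic term whose sign can be controlled, yielding $V^\infty_t \ge 0$ a.s.\ for all $t$ (and by continuity, pathwise). With positivity established, $(S^\infty, V^\infty)$ is a genuine weak solution of \eqref{eq:RoughHeston}; by the uniqueness in Remark \ref{rem:uniqueness_RoughHeston}, its law is that of $(S,V)$. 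Since every subsequential limit has the same law, the full sequence $(S^n, V^n)$ converges weakly to $(S,V)$, proving $(ii)$.
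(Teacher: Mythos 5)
Your roadmap coincides with the paper's strategy (uniform moments and Kolmogorov tightness, identification of any limit as a solution of the modified equation \eqref{eq:RoughHeston_variation} with $(V)_+$ under the square root, positivity via the resolvent of the first kind, then uniqueness to get convergence of the whole sequence), but two steps are left genuinely open. The first is exactly the one you flag as ``the main obstacle'': the uniform bound $\sup_n\sup_t \EE[|V^n_t|^p]\le C_p$. The paper does not iterate a convolution inequality in the usual generalized-Gr\"onwall way; instead it proves Lemma \ref{lem:Gronwall_var}: split $[0,T]$ into $m$ coarse subintervals $[t_k,t_{k+1}]$ with $t_k=kT/m$, use \eqref{eq:A2}--\eqref{eq:A3} to make the constant $C_2$ multiplying $\bar f_n(t_{k+1})$ (coming from $\int_{t_k}^t K(t-\eta_n(r))^2dr$) strictly less than $1$ for $m$ large but fixed, absorb it, and then iterate the resulting affine recursion over the finitely many coarse intervals. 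Since this is applied to $f_n(t)=\EE[|\hat V^n_t|^p]$ with the increment estimate \eqref{eq:incrementsV}, your part $(i)$ is not complete without supplying this (or an equivalent) argument; saying ``a careful iteration argument'' names the difficulty but does not resolve it.

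The second gap is in your limit identification. After Skorokhod representation the driving Brownian motions are themselves $n$-dependent copies, and for each fixed $t$ the quantity $\int_0^t K(t-\eta_n(s))\nu\sqrt{(V^n_{\eta_n(s)})_+}\,dW^n_s$ is a stochastic \emph{convolution} with a kernel that is singular at $s=t$ (e.g.\ $K(t)=Ct^{H-1/2}$), not a single semimartingale integral with a fixed integrator; ``standard stability of stochastic integrals under uniform convergence of integrands'' does not apply as stated, since both integrand and integrator vary and the kernel is unbounded. The paper avoids this by keeping the pair $(\hat V^n,\hat Z^n)$ (scheme plus its driving semimartingale) tight, characterizing any limit through the local martingale problem of Definition \ref{def:MartPb} (Lemma \ref{conv} and Proposition \ref{prop:MartingaleProb}), and invoking \cite[Theorem 2.2]{kurtz1991weak} only for the convolution identity $\hat V_t=\hat V_0+\int_0^t\hat K(t-s)\,d\hat Z_s$; the equivalence with \eqref{eq:RoughHeston_variation} is then Remark \ref{rem:MartingalePb}. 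Your direct route can be repaired (e.g.\ by a Kurtz--Protter-type convergence theorem for stochastic convolutions, or by proving $L^2$-convergence at each fixed $t$ with joint control of integrand and integrator), but as written this step would not go through. Your step (3) matches Proposition \ref{Prop:TwoModifiedModels}(i) in spirit, though the paper's actual mechanism is a contradiction argument with the stopping times $\tau_n=\inf\{t: V_t<-\tfrac1n\}$ and the shifted kernel $\Delta_hK*L$, rather than simply convolving the equation with $L$. Finally, note that uniform moment bounds for $S^n$ itself are not available (the paper says as much), so the transfer to $S^n$ must go through weak convergence of $Y^n$ and the continuous mapping theorem, which is the second of the two options you mention.
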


	We now consider the discrete-time simulation problem of Equation \eqref{eq:IntegVarHeston}.
	Notice that the process $X$ in the continuous-time setting is a non-decreasing process, which is the quadratic variation process of the martingales $M$ and $M^{\bot}$.
	In discrete-time simulation, $X$ would not be non-decreasing, and for this reason, we will consider its running maximum $\Xb_t := \max_{0 \le s \le t} X_s$ to define the quadratic variation process.
	For each discrete time grid $\pi_n$, let us define $(S^n, X^n) = (S^n_{t_k}, X^n_{t_k})_{k =0,1, \cdots, n}$ as follows:
	$S^n_{t_k} := \exp(Y^n_{t_k})$, $k=0, 1, \cdots, n$, and
	\begin{equation}\label{eq:schemeIntegRough}          
	\begin{split}
		Y^n_{t_k} & =  Y_0 -\frac{1}{2}\overline{X}^n_{t_k} + \rho M^n_{t_k}+\sqrt{1-\rho^2}M^{n\bot}_{t_k}, \\
		X_{t_k}^{n}&=V_0t_k+\sum_{i=0}^{k-1}K(t_k-t_i)\Big(\theta t_i-\lambda \overline{X}^n_{t_i}+\nu M^{n}_{t_i}\Big) \Delta t^n_i,\\
		M^n_{t_k}& = \sum_{i=1}^k \sqrt{\overline{X}^n_{t_i}-\overline{X}^n_{t_{i-1}}}Z_i,
		~~~~
		M^{n\bot}_{t_k} = \sum_{i=1}^k \sqrt{\overline{X}^n_{t_i}-\overline{X}^n_{t_{i-1}}}Z_i^{\bot},
	\end{split}
	\end{equation}
	where $\overline{X}^n_{t_i} := \underset{ 0\leq j\leq i }{\max} X^n_{t_j}$, and $(Z_i,~Z_i^{\bot})_{i \ge 1}$ is a sequence of  i.i.d. random variables with standard Gaussian distribution $\mathcal{N}(0,1)$.

	\vspace{0,5em}
	
	Similarly, one can interpolate the process $(S^n, X^n) = (S^n_{t_k}, X^n_{t_k})_{k =0,1, \cdots, n}$ from the discrete-time grid $\pi_n$
	to obtain a continuous time process (with continuous paths) $(S^n, X^n) = (S^n_t, X^n_t)_{t \in [0,T]}$.

	\begin{theorem}\label{thm:CvgIntegVariance}
		\noindent $(i)$ For each $p \ge 2$, there exists a constant $C_p > 0$  such that
		$$
			\EE \big[ \big| X^n_t \big|^p \big] + \EE \big[ \big| \Xb^n_t \big|^p \big] \le C_p,
			~~~
			\EE \big[ \big|X^n_t-X^n_s \big|^p \big] 
			~\leq~
			C_p  (t-s)^{p(H \wedge 1)},
			~~\mbox{for all}~ 0\leq s\leq t\leq T ~\mbox{and}~ n\geq 1.
		$$

		\noindent $(ii)$ The processes $(S^n,X^n)$ defined by \eqref{eq:schemeIntegRough} converge weakly to $(S,X)$ in $C([0,T],\R)\times C([0,T],\R)$ as $n \longrightarrow \infty$, 
		where $(S,X)$ is the unique weak solution to the integrated variance formulation \eqref{eq:IntegVarHeston} of the rough Heston model.
	\end{theorem}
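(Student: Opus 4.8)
The plan is to follow the classical tightness-plus-identification-plus-uniqueness route, now for the scheme \eqref{eq:schemeIntegRough}. First I would establish the moment estimates in part $(i)$. The key difficulty compared with the Volterra-equation scheme \eqref{eq:SchemeRoughHeston} is that $X^n$ depends on its own running maximum $\Xb^n$ in a nonlinear-but-monotone way, and on the martingale increments $M^n_{t_i}$ whose increments have conditional variance $\Xb^n_{t_i}-\Xb^n_{t_{i-1}}$. I would bound $\EE[|M^n_{t_k}|^p]$ by the Burkholder--Davis--Gundy inequality in terms of $\EE[|\Xb^n_{t_k}|^{p/2}]$, then plug this into the recursion for $X^n_{t_k}$. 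Using that $K$ is non-increasing and nonnegative (Assumption \ref{assum:main}), that $\Xb^n$ is non-decreasing, and that $0\le \Xb^n_{t_i}\le \max_{j\le i}\big(V_0 t_j + \sum \cdots\big)$, I would set up a discrete Gr\"onwall-type argument on $u^n_k := \EE[\sup_{j\le k}|X^n_{t_j}|^p]$; the point is that $\Xb^n_{t_i}\le \sup_{j\le i}|X^n_{t_j}|$, so the running maximum does not worsen the estimate. One has to be slightly careful with the kernel sum $\sum_{i=0}^{k-1}K(t_k-t_i)\Delta t^n_i$, which is controlled by $\|K\|_{L^1}$ up to the discretization, and with $\|K\|_{L^2}$ for the stochastic term; both are finite since $K\in L^2([0,T])$ and $K$ non-increasing. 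For the time-regularity estimate $\EE[|X^n_t-X^n_s|^p]\le C_p(t-s)^{p(H\wedge 1)}$ I would split the increment into the drift part $V_0(t-s)$, the "memory shift" part $\int_0^s(K(t-\enu)-K(s-\enu))(\cdots)du$ and the "new mass" part $\int_s^t K(t-\enu)(\cdots)du$, and apply \eqref{eq:A2}--\eqref{eq:A3} together with the just-obtained uniform moment bound on the integrand; the martingale term is handled by BDG and then by the same two kernel estimates. The interpolation from the grid to continuous time only costs a factor depending on $\delta_n$ and does not affect the exponents, as in Theorem \ref{thm:CvgRoughHeston}.

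Second, I would prove tightness of $(S^n,X^n)$ in $C([0,T],\R)^2$. The bound $\EE[|X^n_t-X^n_s|^p]\le C_p(t-s)^{p(H\wedge 1)}$ with $p$ large enough gives Kolmogorov--Chentsov tightness for $X^n$ (hence for $\Xb^n$, since $x\mapsto \sup_{[0,\cdot]}x$ is continuous on $C([0,T])$); together with the uniform moment bounds this also yields tightness of $M^n$ and $M^{n\bot}$ (their brackets converge along subsequences to continuous non-decreasing limits, and one uses Aldous' criterion or again a fourth-moment increment bound coming from BDG and the kernel estimates). For $Y^n = Y_0 - \tfrac12\Xb^n + \rho M^n + \sqrt{1-\rho^2}M^{n\bot}$, tightness follows from that of the three summands, and $S^n = \exp(Y^n)$ is then tight by the continuous mapping theorem combined with a uniform exponential-moment bound $\sup_n\EE[\sup_{t\le T}|S^n_t|^p]<\infty$ (which one gets from the fact that $Y^n$ is, up to the $-\tfrac12\Xb^n$ compensator, a martingale with bracket $\Xb^n$, so $S^n$ is a nonnegative local martingale; a standard localization/Fatou argument with the uniform moment bound on $\Xb^n$ closes it).

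Third, the identification step: along any weakly convergent subsequence, with limit denoted $(S,X,M,M^{\bot})$ (after possibly invoking Skorokhod representation to work with a.s. convergence), I would pass to the limit in each term of \eqref{eq:schemeIntegRough}. The Riemann-sum $\sum_{i=0}^{k-1}K(t_k-t_i)(\theta t_i - \lambda\Xb^n_{t_i}+\nu M^n_{t_i})\Delta t^n_i$ converges to $\int_0^t K(t-s)(\theta s - \lambda \Xb_s + \nu M_s)\,ds$ by dominated convergence, using the continuity of $K$ and the uniform moment bounds to justify uniform integrability; here $\Xb$ is the limit of $\Xb^n$. The discrete martingale $M^n$ converges to a continuous martingale $M$ with $\langle M\rangle = \Xb$ and $\langle M,M^{\bot}\rangle = 0$ (the bracket identification uses that $(M^n_{t_k})^2 - \Xb^n_{t_k}$ and $M^n_{t_k}M^{n\bot}_{t_k}$ are discrete martingales and a limit theorem for martingales, e.g. \cite{jacod2013limit}). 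So the limit satisfies
\begin{align*}
	S_t &= S_0 + \int_0^t S_s\,d\big(\rho M_s + \sqrt{1-\rho^2}M^{\bot}_s\big),\\
	X_t &= V_0 t + \int_0^t K(t-s)\big(\theta s - \lambda \Xb_s + \nu M_s\big)\,ds,
\end{align*}
with $\langle M\rangle=\langle M^{\bot}\rangle = \Xb$, $\langle M,M^{\bot}\rangle=0$ — that is, a \emph{modified} version of \eqref{eq:IntegVarHeston} in which the running maximum $\Xb$ appears in the $-\lambda\Xb_s$ term and as the bracket, instead of $X$ itself. The final and most delicate step is to show this modified equation in fact forces $X=\Xb$, i.e. that the limit $X$ is automatically non-decreasing, so that the modified system coincides with \eqref{eq:IntegVarHeston}; then the uniqueness of the weak solution to \eqref{eq:IntegVarHeston} (Remark \ref{eq:uniqueness_VarHeston}, from \cite{abi2021weak}) identifies the limit uniquely and upgrades subsequential convergence to full weak convergence, with $S^n\to S$ following along. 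I expect this monotonicity-of-the-limit argument to be the main obstacle: following the idea indicated in the introduction (adapting \cite{abi2019markovian}), one convolves the $X$-equation with the resolvent of the first kind $L$ of $K$, uses that $L$ is nonnegative and $s\mapsto L([s,s+t])$ is non-increasing (Assumption \ref{assum:main}) to rewrite $\Xb$ (equivalently, increments of $X$) as a sum of manifestly nonnegative terms — a "$d(K*L)=1$" trick turning $X_t$ into $V_0(L*1)^{-1}$-type expressions plus an integral against $L$ of the non-decreasing process $\Xb$ and of the bracket $\langle M\rangle=\Xb$ — from which one deduces $X_t - X_s\ge 0$ for $s\le t$, hence $X=\Xb$. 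Once non-negativity and monotonicity of $X$ are in hand, the quadratic-variation constraint makes $(S,X,M,M^{\bot})$ a genuine weak solution of \eqref{eq:IntegVarHeston}, and uniqueness concludes the proof of part $(ii)$.
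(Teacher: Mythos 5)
Your overall strategy coincides with the paper's (moment and increment bounds via BDG, H\"older and a Gr\"onwall argument using \eqref{eq:A2}--\eqref{eq:A3}; Kolmogorov tightness; identification of any weak limit as a solution of the modified system \eqref{eq:IntegVarHeston_variation} in which $\Xb$ replaces $X$; monotonicity of the limit; uniqueness from \cite{abi2021weak}). However, the step you yourself single out as the main obstacle --- showing that the limit $X$ is non-decreasing, hence $X=\Xb$ --- is left as a heuristic, and the mechanism you sketch would not go through as stated. Convolving the $X$-equation with the resolvent $L$ does not exhibit the increments of $X$ as ``a sum of manifestly nonnegative terms'': the drift $\theta s-\lambda\Xb_s$ has no sign, and $\Xb$ is not tied pathwise to $X$ in a way the convolution alone can exploit. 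What the paper actually does (Proposition \ref{Prop:TwoModifiedModels}(ii)) is: apply the stochastic Fubini theorem to show $X_t=\int_0^t(\cdots)\,ds$ is absolutely continuous; set $V_t:=\frac{dX_t}{dt}$, which solves a Volterra equation whose diffusion coefficient is $\nu\sqrt{\frac{d\Xb_t}{dt}}$; observe the key structural fact that $V_t<0$ forces $\frac{d\Xb_t}{dt}=0$, so wherever the density is negative the martingale part vanishes and the drift is $\ge\theta>0$; and then run the contradiction argument with the stopping time $\tau_n=\inf\{t:\,V_t<-\frac1n\}$, the shifted kernel $\Delta_hK$, and the facts that $\Delta_hK*L$ is nonnegative, non-decreasing and bounded by $1$ (this is where the hypotheses on $L$ in Assumption \ref{assum:main} enter). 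Without this reduction to the density and the stopping-time argument, the proposal does not prove that the modified equation coincides with \eqref{eq:IntegVarHeston}, which is precisely what allows the uniqueness result of \cite{abi2021weak} to identify the limit.

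Two further remarks. First, your claimed uniform bound $\sup_n\E\big[\sup_{t\le T}|S^n_t|^p\big]<\infty$ via ``nonnegative local martingale plus Fatou'' is not justified: $p$-th moments ($p>1$) of the exponential local martingale $S^n$ would require exponential moments of $\Xb^n_T$, which do not follow from the polynomial bounds of part $(i)$; the paper explicitly leaves uniform moment estimates on $S^n$ open. Fortunately this is not needed, since weak convergence of $S^n=\exp(Y^n)$ follows from that of $Y^n$ by the continuous mapping theorem. Second, $\Xb^n$ (the interpolation of the running maximum over grid points) is not the pathwise running maximum of $X^n$, so ``continuity of the sup map'' does not directly give the identification of its limit; the paper introduces $\Xt^n_t=\sup_{s\le t}X^n_s$, obtains its limit $\sup_{s\le \cdot}X_s$ by continuous mapping, and shows $\sup_{t\le T}|\Xt^n_t-\Xb^n_t|\Rightarrow 0$ from the increment estimates --- a small but necessary supplement, since the monotonicity step needs $\langle M\rangle=\Xb=\sup_{s\le\cdot}X_s$ in the limiting system.
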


	\begin{remark}
	$\mathrm{(i)}$ By the weak convergence of $(S^n, X^n)$ to $(S, X)$, one has the convergence $\E [ f(S^n, X^n)] \longrightarrow \E[ f(S, X)]$ for any bounded continuous payoff function $f$ on $(S,X)$.
	Moreover, since $\sup_{n \ge 1} \E[|\Xb^n_T|^p] < \infty$ for any $p \ge 1$, 
	it follows that $\Lc(X^n)$ converges to $\Lc(X)$ under the $p$-Wasserstein distance, for any $p \ge 1$.
	Consequently, one has the convergence $\E [ f(X^n)] \longrightarrow \E[f(X)]$ for any continuous function $f: C([0,T], \R) \longrightarrow \R$ with polynomial growth.
	
	Unfortunately, we do not have a uniform moment estimation on $S^n$ for all $n \ge 1$.
	For the process $S$ in \eqref{eq:RoughHeston}, the moment explosion of $S_t$ has been studied in \cite{gerhold2019moment}.
	This is possible since one can find a Volterra type equation to compute the characteristic function of the marginal distribution of $\log(S)$. 
	It is nevertheless not clear how to elaborate the same technique on the discrete time Euler scheme solution $S^n$.
	We would like to leave this for future research.
	
	\vspace{0.5em}
	
	\noindent $\mathrm{(ii)}$ Our technique does not allow us to obtain a strong convergence rate.
	At the same time, as the processes $V$ and $X$ are not semimartingales, 
	it is not clear how to adapt the error analysis techniques for classical Heston models, such as in \cite{berkaoui2008euler, alfonsi2015affine}, to this rough Heston model context. 
	Moreover, for Equations \eqref{eq:RoughHeston_intro} and \eqref{eq:HyperRough_intro}, because of the singular kernels and square-root-type coefficients, the strong existence and uniqueness of the solution is still an open question.
	It is therefore not surprising that a strong convergence rate is left as an open question.
	\end{remark}

\section{Numerical examples}
\label{sec:NumericalExamples}

	In this section, we provide some numerical examples to evaluate option prices in the rough Heston model with interest rate $r=0$,
	by using the Monte-Carlo method based on the schemes \eqref{eq:SchemeRoughHeston} and \eqref{eq:schemeIntegRough}. 
	Namely, for an option with payoff function $f(S, X)$, one aims to estimate its price given by 
	$$
		\E[f(S, X)].
	$$
	We use the uniform discrete-time grid $\pi_n = \{ 0 = t_0 < \cdots < t_n = T\}$ with $t_k := k \Delta t$, $\Delta t=\frac{T}{n}$ for Schemes \eqref{eq:SchemeRoughHeston} and \eqref{eq:schemeIntegRough}. 
	We use $M$ i.i.d. copies of simulations $(S^{n,m}, V^{n,m})_{m=1, \cdots, M}$ or $(S^{n,m}, X^{n,m})_{m=1, \cdots, M}$ to estimate the option price $\E[ f(S,X) ]$ by the mean value :
	$$
		\overline U_M ~:=~ \frac{1}{M} \sum \limits_{m=1}^M f \big(S^{n,m}_., X^{n,m}_{\cdot} \big).
	$$
	Notice that from the simulations $V^{n,m}$, one can use $X^{n,m}_{t_k} := \sum_{i=0}^k V^{n,m}_{t_i}$ to compute $X^{n,m}$.
	We also compute the empirical standard deviation of the simulations, which (divided by $\sqrt{M}$) can serve as the statistical error, i.e.
	$$
		\Sigma_M ~:=~ \frac{1}{\sqrt{M}} \bigg( \frac1M \sum \limits_{m=1}^M f \big(S^{n,m}_., X^{n,m}_{\cdot} \big)^2 - \Big( \frac1M \sum \limits_{m=1}^M f \big(S^{n,m}_., X^{n,m}_{\cdot} \big) \Big)^2 \bigg).
	$$
	We then use the following interval as confidence interval of the estimation:
	$$
		\big[ \overline U_M - 2 \Sigma_M,  \overline U_M - 2 \Sigma_M \big].
	$$
	
	For the rough Heston model, we choose  the following parameters: $\lambda=0.3$, $\nu=0.3$, $V_0=0.02$, $\theta=0.02$, 
	$\rho=-0.7$, $S_0=1$,
	and the kernel function $K(t) := \Gamma(H+\frac12)^{-1}  t^{H-\frac12}$ with $H=0.1$.
	We will use different discretization parameters $n$ in our simulations. 
	For each example, we set the number of i.i.d. copies $M = 10^5$, and display the mean value, the statistical error as well as the computational time in the tables.

	\vspace{0.5em}
	
	Notice also that in \eqref{eq:schemeIntegRough}, one can compute 
	$\int_0^{t_k} K(t_k - s) \theta s ds$ explicitly instead of approximating it by $\sum_{i=0}^{k-1} K(t_k - t_i) \theta t_i \Delta t^n_i$.
	This will be taken into account in our simulation.

\subsection{Pricing options on the risky asset}

	Let us first consider the European Call options,  Asian options and Lookback options with the following payoff:
	$$
		(S_T - K)_+,
		~~~
		(A_T - K)_+,
		~~~
		(M_T - K)_+,
	$$
	where $K =1$, $A_T := \int_0^T S_t dt$ and $M_T := \max_{0 \le t \le T} S_t$.
	For the Monte-Carlo method, we simply replace $(S_T, A_T, M_T)$ by $(S^{n,m}_T, A^{n,m}_T, M^{n,m}_T)$ in the payoff function to compute the estimations,
	where $A^{n,m}_T := \frac{T}{n}\sum^n_{k=1}S^{n,m}_{t_k}$, $M^{n,m}_T := \max\limits_{0\leq k\leq n} S^{n,m}_{t_k}$.

	\vspace{0.5em}

	For the European call option, one can compute a reference value for $\E[ (S_T- K)_+]$.
	Indeed, as described in \cite{el2019characteristic}, one can compute the characteristic function of $\log(S_T)$ by solving a fractional Riccati equation, and then use the inverse Fourier transform method to compute $\E[ (S_T- K)_+]$.
	With the above parameters of the rough Heston model, we obtain $0.056832$ as reference value for the European call option.
	
	\vspace{0.5em}

	The numerical results are reported in Figures \ref{fig:EuropCall}, \ref{fig:Asian} and \ref{fig:Lookback} (see also respectively Tables \ref{tab:EuropCall}, \ref{tab:Asian} and \ref{tab:Lookback} for the data).
	We can observe the convergence of both schemes as $n$ increases.
	For European call option and Asian option, it seems that the scheme \eqref{eq:schemeIntegRough} based on integrated variance formulation \eqref{eq:IntegVarHeston}  has a better performance.
	But for the lookback option, the scheme \eqref{eq:SchemeRoughHeston} based on the stochastic Volterra equation \eqref{eq:RoughHeston} seems to perform better.

\begin{figure}[htb]        
 \center{\includegraphics[width=10cm]  {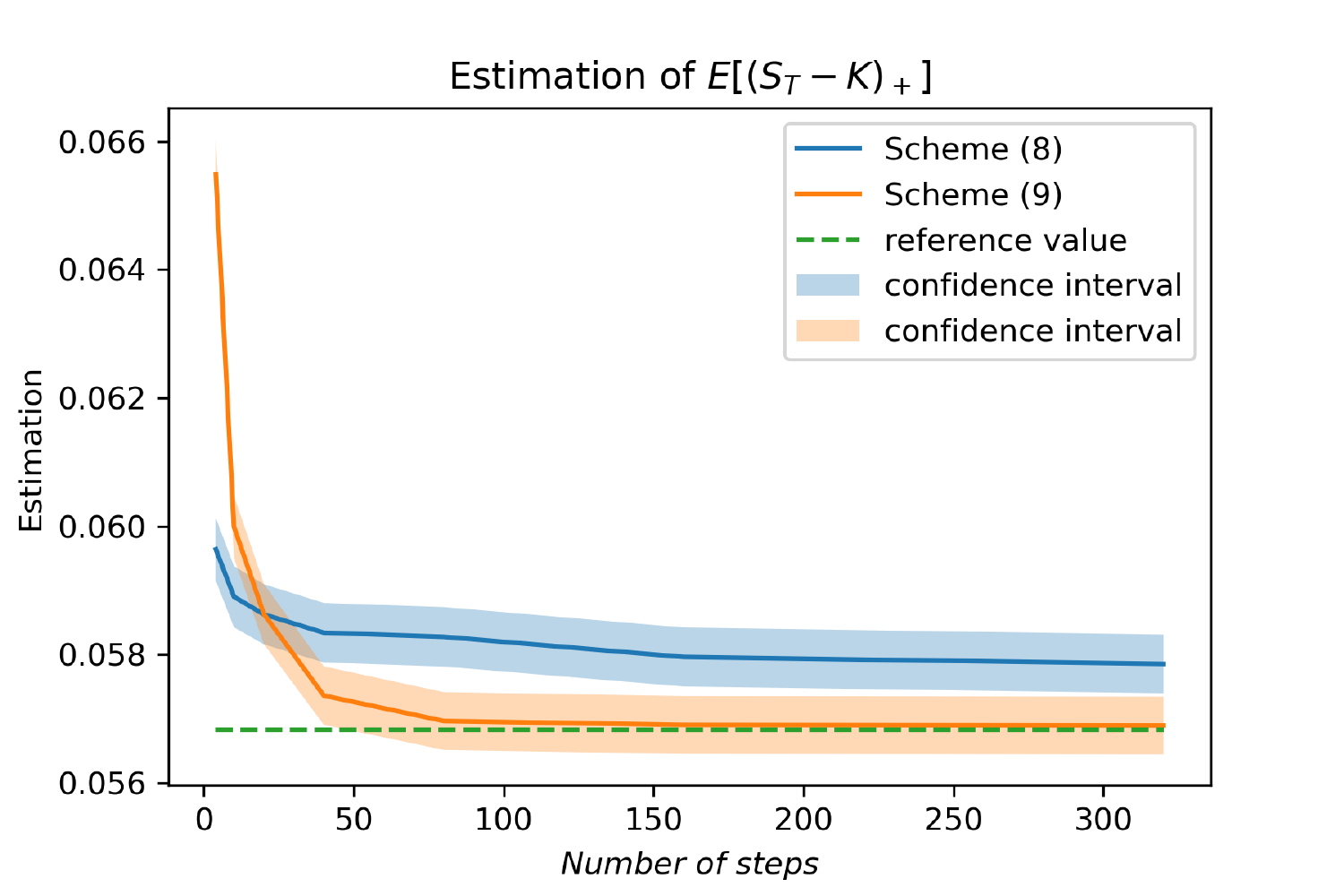}}        
 \caption{\label{fig:EuropCall} Estimation of European call option price}      
 \end{figure}

\begin{figure}[htb]        
 \center{\includegraphics[width=10cm]  {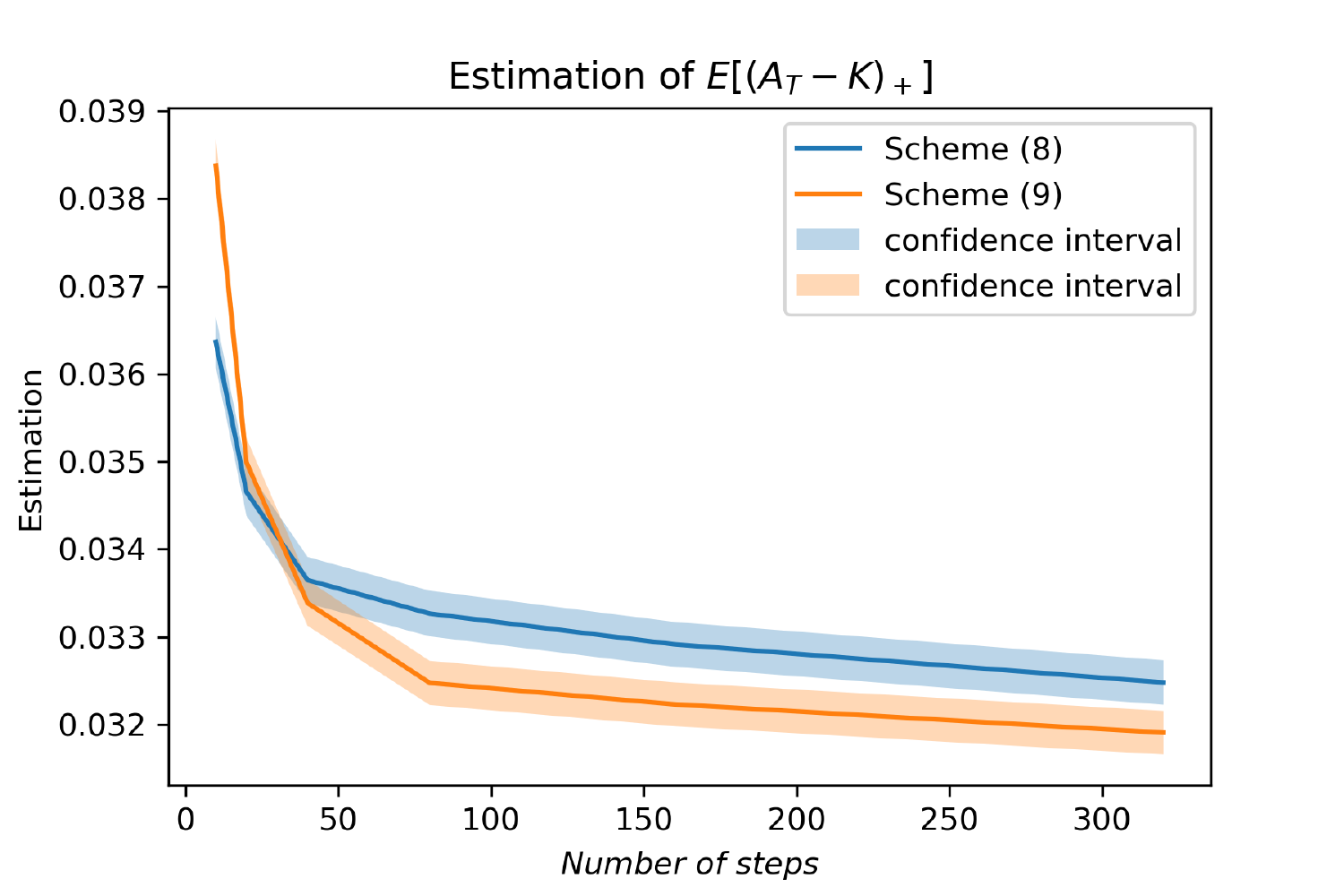}}        
 \caption{\label{fig:Asian} Estimation of Asian option price}      
 \end{figure}

\begin{figure}[htb]        
 \center{\includegraphics[width=10cm]  {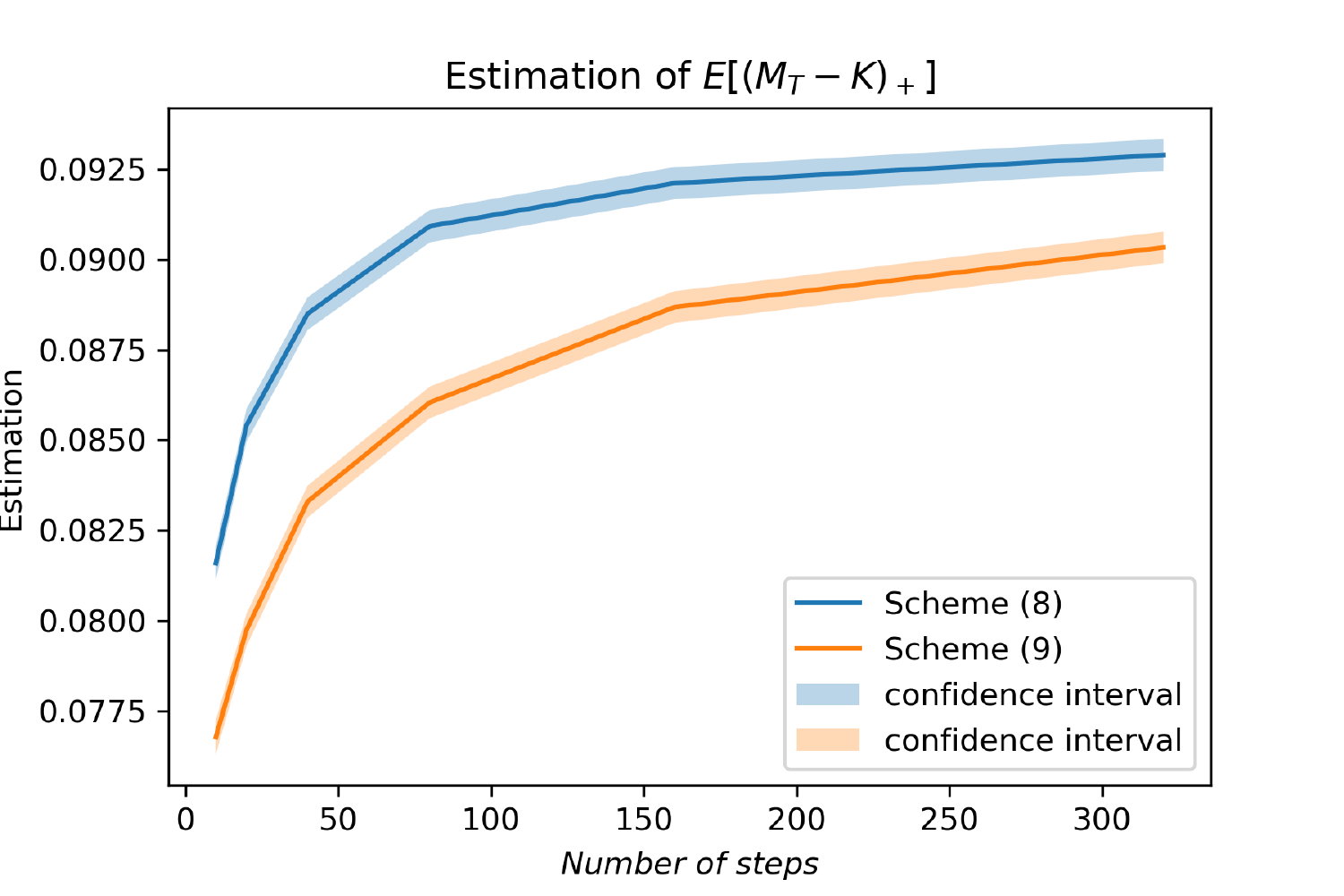}}        
 \caption{\label{fig:Lookback} Estimation of lookback  option price}      
 \end{figure}

\newpage

\subsection{Pricing options on the variance process}

	We next consider the options on the variance process $X$, including the variance option with payoff $X_T$, and the call option on the variance with payoff $(X_T - V_0)_+$.
	Under the Rough Heston model with interest rate $r=0$, the price of the options are given respectively by $\E[X_T]$ and $\E[(X_T - V_0)_+]$.
	For both options, one can compute the reference value by deterministic methods.
	
	\vspace{0.5em}
	
	Indeed, for the variance swap option, one can deduce from the dynamic
	$$
		X_t ~=~ V_0 T+\int_0^t K(t-s) \big( \theta s-\lambda X_s+W_{X_s} \big)~ds
	$$
	that
	$$
		\EE[X_t] ~=~ V_0 t + \int_0^t K(t-s) \big(\theta s-\lambda \EE[X_s] \big)~ds,
	$$
	which is a linear Volterra ODE.
	With the above parameters of the rough Heston model, one obtains $0.028295$ as the reference value for the variance swap option price $\EE[X_{T}]$.
	
	\vspace{0.5em}
	
	For the call option on the variance, one can use the results in \cite{abi2021weak} to compute the characteristic function of $X_T$ by solving the corresponding Riccati ODE,
	and then use the inverse Fourier transform method to compute the value of $\EE[(X_T-V_0)_+]$.
	With the above parameters of the rough Heston model, we obtain 
	$0.013517$ as reference value for $\EE[(X_T-V_0)_+]$.

	\vspace{0.5em}
	
	The discrete-time scheme in \eqref{eq:schemeIntegRough} allows simulating directly the value of $X^n_T$.
	Nevertheless, the discrete-time scheme in \eqref{eq:SchemeRoughHeston} provides simulations of $(V^n_{t_k})_{k=0, \cdots, n}$, 
	we then set $X^n_T := \sum_{k=0}^n V^n_{t_k}{\cdot \Delta t}$ for the Monte-Carlo estimation.
	
	\vspace{0.5em}

	The numerical results are reported  in Figures \ref{fig:VarianceSwap} and \ref{fig:VarianceCall} (see respectively Tables \ref{tab:VarianceSwap} and \ref{tab:VarianceCall} for the data).
	We can observe the convergence of both scheme as the number of time steps $n$ increases. 
	At the same time, for the example on variance swap, the performance of Scheme \eqref{eq:schemeIntegRough}  based on the integrated variance formulation seems slightly better.

\begin{figure}[htb]        
 \center{\includegraphics[width=10cm]  {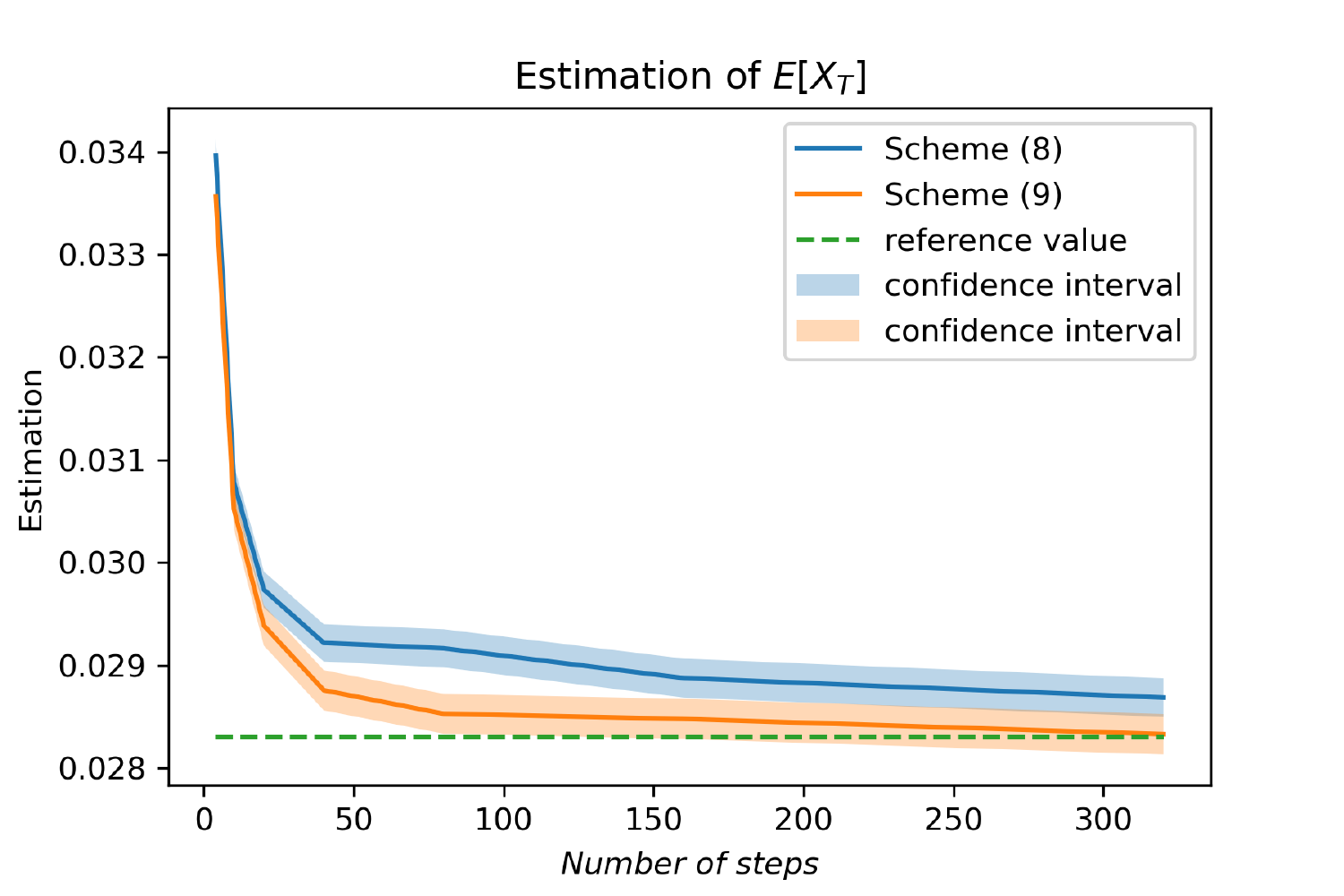}}        
 \caption{\label{fig:VarianceSwap}Estimation of variance swap price}      
 \end{figure}
 
 \begin{figure}[htb]        
 \center{\includegraphics[width=10cm]  {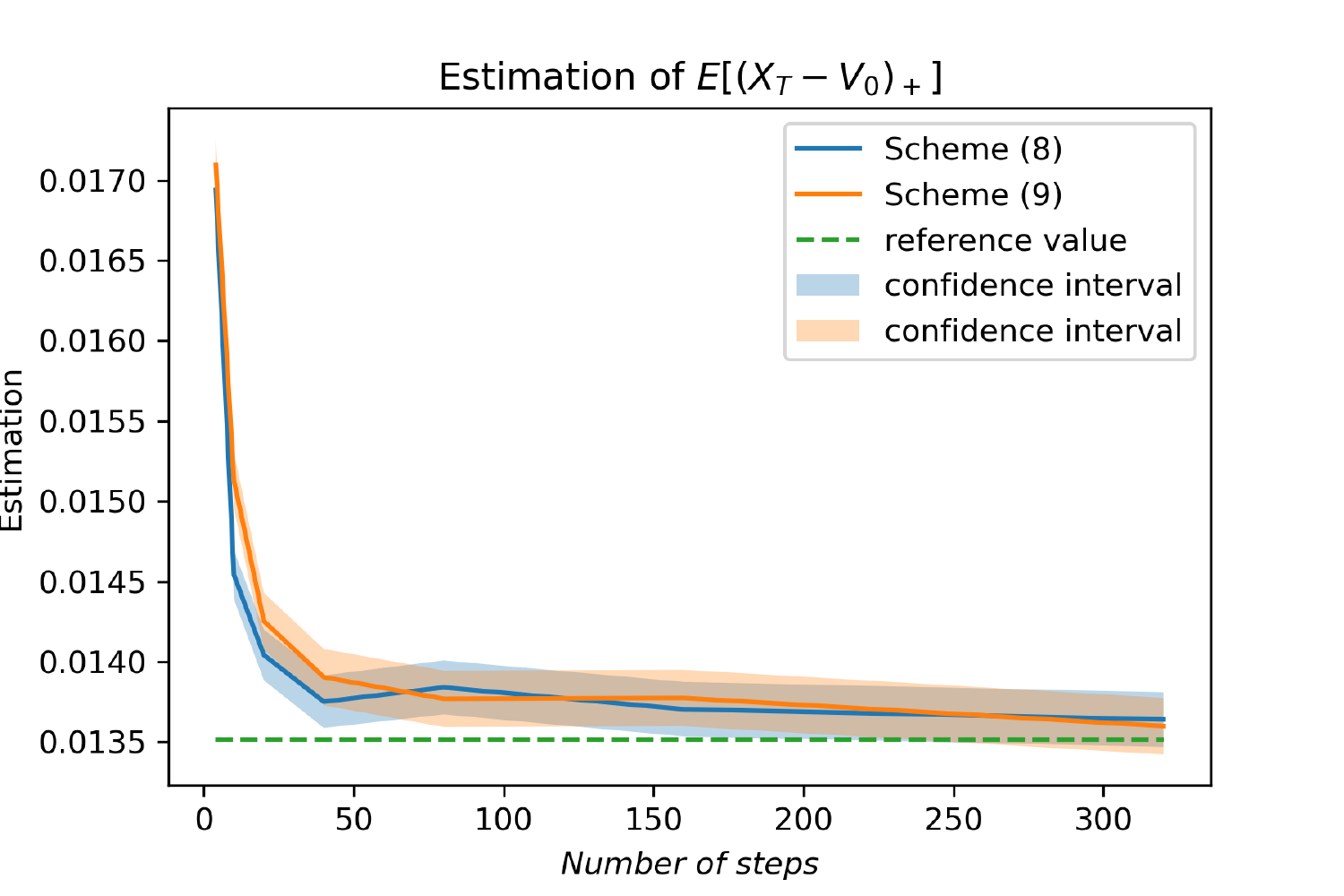}}        
 \caption{\label{fig:VarianceCall} Estimation of variance call price}      
 \end{figure}
 
\newpage

\section{Proofs of Theorems \ref{thm:CvgRoughHeston} and \ref{thm:CvgIntegVariance}}
\label{sec:proofs}

	Recall that, throughout the paper, Assumption \ref{assum:main} holds true.

\subsection{Tightness of solutions to Scheme \eqref{eq:SchemeRoughHeston}}

	Let us first rewrite the system \eqref{eq:RoughHeston} and the corresponding discrete-time scheme \eqref{eq:SchemeRoughHeston} in two-dimensional equations.
	
	\vspace{0.5em}
	
	Let $(S, V)$ be the weak solution of \eqref{eq:RoughHeston}, recall that $Y_t := \log(S_t)$, $v_+ := \max(v, 0)$ for all $v \in \R$.
	We denote
	\begin{equation*}
		\hat{V_t} := \begin{pmatrix}
			Y_t \\ V_t
		\end{pmatrix},~~
		~\hat{K}(t-s)\overset{\triangle }{=}\begin{pmatrix}
			1 & 0 \\
			0 & K(t-s)
		\end{pmatrix},~\hat{W_s}:=\begin{pmatrix}
			W_s^{\bot} \\ W_s
		\end{pmatrix},
	\end{equation*}
	and define functions $b: \R^2 \longrightarrow \R^2$, $\sigma: \R^2 \longrightarrow {\R^{2\times 2}}$ and $a: \R^2 \longrightarrow {\R^{2\times 2}}$ by
	\begin{equation*}
		b\begin{pmatrix}
			y\\ v
		\end{pmatrix} 
		:=\begin{pmatrix}
			-\frac{1}{2} v\\ \theta -\lambda v_+
		\end{pmatrix},
		~~
		\sigma\begin{pmatrix}
			y \\ v
		\end{pmatrix}
		:=  \sqrt{v_+} \begin{pmatrix}
			\sqrt{1-\rho^2} & \rho \\ 0 &\nu
		\end{pmatrix},
		~\mbox{and}~
		a(x) = \sigma \sigma^{\top}(x).
	\end{equation*}
	Notice that $V$ is  $\R_{+}$-valued, then it is easy to check that $\hat{V} = (Y, V)$ satisfies 
	\begin{equation}\label{def:generalRoughHeston}
		\hat{V}_t
		=
		\hat{V}_0 + \int_0^t \hat{K} (t,s)~d\hat{Z}_s,
		~~\mbox{with}~
		\hat{Z_t}:=\int_0^t b(\hat{V_s})~ds+\int_0^t \sigma(\hat{V_s})~d\hat{W_s}. 
	\end{equation}

	Next, for $s,t \in [0, T]$, and $\xbf \in C([0,T],\R^2)$, we define the following new coefficients:
        \begin{equation}\label{def:operators}
	\begin{split}
		b^n(t, \xbf) & := b( \xbf(\ent)),
		~~~
		\sigma^n(t, \xbf ) := \sigma( \xbf(\ent)),
		~~~
		a^n(t, \xbf) :=\big(\sigma^n (\sigma^n)^{\top} \big) (t, \xbf),\\
		K^n(t,s)&:=K(t-\ens),
		~~~
		\hat{K}^{n}(t,s) :=\hat{K}(t-\ens).\\
	\end{split}
	\end{equation}
        	Let $(S^n, V^n)$ be the numerical solution to  Scheme  \eqref{eq:SchemeRoughHeston}.
	Then we can write $\hat{V}^{n} := (Y^n, V^n)$ as solution to the stochastic integral equation
         \begin{equation} \label{eq:schemeRoughHeston_SDE}
             \hat{V}^{n}_t = \hat{V}_0+\int_0^t \hat{K}^{n}(t,s)~d\hat{Z}^n_s,
             ~~\mbox{with}~
             \hat{Z}_s^n = \int_0^s b^n(s,\hat{V}^{n})~ds+\int_0^s \sigma^n(s,\hat{V}^n)~d\hat{W}_s.
         \end{equation}
         
	We next define the infinitesimal generators $\Lc^n$ and $\Lc$ as follows:
	for all $f\in C^2_{b}(\mathbb{R}^2)$ and $(\xbf, \zbf)\in C([0,T],\R^2)\times C([0,T],\R^2)$ and $t \in [0,T]$, let
	\begin{equation}
		\begin{split}\label{def:AnA}
			\Lc^n_t f(\xbf, \zbf) 
			&~=~
			b^n(t, \xbf) \cdot \nabla f( \zbf_t) + \frac{1}{2}  \text{Tr} \big( a^n(t, \xbf) \nabla^2 f(\zbf_t) \big),\\
			\Lc_t f(\xbf, \zbf) 
			&~=~
			b(\xbf_t)  \cdot \nabla f(\zbf_t)+\frac{1}{2}  \text{Tr} \big( a(\xbf_t) \nabla^2 f(\zbf_t) \big).
		\end{split}
	\end{equation}
	It is easy to check that for some constant $C_f > 0$ depending on $f$, one has
	\begin{equation}\label{NormEstimate}
		\big| \Lc^n_t f( \xbf, \zbf) \big| \leq C_f \big(1+| \xbf_{\ent})| \big),
		~~
		\big| \Lc_t f(\xbf, \zbf) \big| \leq C_f \big(1+| \xbf_t| \big),
		~~\mbox{for all}~ \xbf, \zbf \in C([0,T], \R^2).
	\end{equation}

	We next provide a technical lemma.

	\begin{lemma} \label{lem:Gronwall_var}
		Let $(f_n)_{n \ge 1}$ be a sequence of non-negative functions on $[0,T]$ such that $\sup_{n \ge 1} f_n(0) < \infty$.
		Assume that for some constants $q \ge 2$ and $C_0 > 0$, one has
		\begin{align} \label{eq:Gronwall_ineq}
			f_n (t) 
			~\le~&
			C_0 \big(1 + f_n(s) \big) 
			~+~
			C_0 \Big( \int_s^t  \!\! K(t - \eta_n(r))^2 \big( 1+ f_n(\eta_n(r))^{2/q} \big) dr \Big)^{q/2} \\
			&+
			C_0 \Big( \int_0^s \!\! \big|  K(t - \eta_n(r)) -  K(s - \eta_n(r)) \big|^2  \big( 1+ f_n(\eta_n(r))^{2/q} \big) dr \Big)^{q/2},
			~\mbox{for all}~ n \ge 1, ~s \le t. \nonumber
		\end{align}		
		Then 
		$$
			\sup_{n \ge 1}  \sup_{0 \le t \le T} f_n(t)  ~<~ \infty.
		$$
	\end{lemma}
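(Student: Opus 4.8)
The plan is to establish the bound by a discrete-time Grönwall-type iteration, controlling $g_n(t) := \sup_{0 \le u \le t} f_n(u)$ rather than $f_n(t)$ directly, and to exploit Assumption~\ref{assum:main} — specifically \eqref{eq:A2} and \eqref{eq:A3} — to absorb the kernel integrals. First I would set $M := \sup_{n \ge 1} f_n(0) \vee 1 < \infty$ and, fixing $n$, argue that $g_n$ is finite on $[0,T]$ by a bootstrapping argument on a sufficiently fine subdivision: on a short interval $[0,\tau]$ the integrals on the right-hand side of \eqref{eq:Gronwall_ineq} involve only $f_n(\eta_n(r))$ for $r$ in that interval (together with the term $C_0(1+f_n(s))$), and since $f_n$ restricted to the grid is a fixed finite collection of numbers, $g_n$ is automatically finite for each fixed $n$; the content is the \emph{uniformity} in $n$.

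The key step is a clean estimate of the two kernel integrals. Using the elementary inequality $(\int_s^t \phi(r)\,\psi(r)\,dr)^{q/2} \le (\int_s^t \phi(r)\,dr)^{q/2-1}\int_s^t \phi(r)\,\psi(r)^{q/2}\,dr$ for $\phi \ge 0$ and $q \ge 2$ (Jensen/Hölder with the normalized measure $\phi(r)\,dr$), applied with $\phi(r) = K(t-\eta_n(r))^2$ and $\psi(r) = 1 + f_n(\eta_n(r))^{2/q}$, one gets
\begin{equation*}
	\Big( \int_s^t K(t-\eta_n(r))^2 \big(1 + f_n(\eta_n(r))^{2/q}\big)\,dr \Big)^{q/2}
	~\le~
	C\, \delta_n^{(q-2)H} \int_s^t K(t-\eta_n(r))^2 \big(1 + f_n(\eta_n(r))\big)\,dr,
\end{equation*}
where I used \eqref{eq:A2} (with $t$ there replaced by $s$ and $\delta$ by $t-s \le T$, and crudely $\delta_n \le T$) to bound $\int_s^t K(t-\eta_n(r))^2\,dr \le C(t-s)^{2H} \le C T^{2H}$, and $(1+x^{2/q})^{q/2} \le 2^{q/2-1}(1 + x)$. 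The same treatment applied to the third term of \eqref{eq:Gronwall_ineq}, this time invoking \eqref{eq:A3}, yields an analogous bound with $|K(t-\eta_n(r)) - K(s-\eta_n(r))|^2$ in place of $K(t-\eta_n(r))^2$. Collecting terms, there is a constant $C_1 > 0$ (depending on $C_0, C, T, q$ but not on $n$) such that
\begin{equation*}
	f_n(t) ~\le~ C_1\big(1 + g_n(t)\big) + C_1 \int_0^t \Phi_n(t,s,r)\,\big(1 + g_n(r)\big)\,dr,
\end{equation*}
where $\Phi_n(t,s,r)$ combines the two squared-kernel weights. The subtle point is that the coefficient $C_1(1+g_n(t))$ sits on the right-hand side with $g_n$ evaluated at the \emph{same} time, so a naive Grönwall fails; this is the main obstacle.

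To overcome it I would not keep the $C_0(1 + f_n(s))$ term as is, but rather iterate the inequality once: substitute the bound for $f_n(s)$ back into itself. Since the problematic term is $C_0(1 + f_n(s))$ with $s \le t$ free, one should instead take the supremum over $s \le t$ carefully — note that in \eqref{eq:Gronwall_ineq} the right-hand side is a bound for $f_n(t)$ valid for \emph{every} $s \le t$, so in particular one may choose $s$. Choosing $s = t$ makes the middle integral vanish and the last integral becomes $\int_0^t |K(t - \eta_n(r)) - K(t - \eta_n(r))|^2(\cdots) = 0$, leaving only $f_n(t) \le C_0(1 + f_n(t))$, which is vacuous. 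So instead I would partition $[0,T]$ into finitely many subintervals of length $h$ chosen so that $C_1 \int$-type kernel contributions over an interval of length $h$ are at most $\tfrac12$; on the first subinterval $[0,h]$, taking $s = 0$ in \eqref{eq:Gronwall_ineq} gives $f_n(t) \le C_0(1+M) + (\text{kernel integrals over } [0,t] \text{ of } 1 + f_n(\eta_n(r))^{2/q})$, and by the estimates above plus the choice of $h$ one absorbs $\tfrac12 g_n(h)$ into the left, yielding $g_n(h) \le 2C_0(1+M) + C$. One then proceeds inductively: on $[kh, (k+1)h]$, taking $s \in [(k-1)h, kh]$ appropriately (or $s = kh$), the $C_0(1+f_n(s))$ term is bounded by $C_0(1 + g_n(kh))$ which is already controlled by the induction hypothesis, the middle integral is over a short interval and contributes $\le \tfrac12 g_n((k+1)h)$ plus lower-order terms, and the long integral $\int_0^{kh}|K(t-\eta_n(r)) - K(s-\eta_n(r))|^2(\cdots)\,dr$ is handled by \eqref{eq:A3} with $\delta = t - s \le 2h \le 2T$, contributing $C(1 + g_n(kh))$. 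Since $T/h$ is a fixed finite number independent of $n$, the induction terminates after finitely many steps with a bound $\sup_{0 \le t \le T} f_n(t) = g_n(T) \le C^{T/h}(1+M) =: C_\infty$ uniform in $n$, which is the claim. The only delicate bookkeeping is to make sure that at each inductive step $s$ can be chosen in the \emph{previous} block so that $t - s$ stays of order $h$ (making the middle integral small) while $f_n(s) \le g_n(kh)$ is already bounded — which is exactly why the hypothesis is stated for all pairs $s \le t$ rather than just $s = \eta_n(t)$.
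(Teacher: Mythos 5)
Your proposal is correct and follows essentially the same route as the paper's proof: partition $[0,T]$ into finitely many blocks, take $s$ to be the left endpoint of the current block in \eqref{eq:Gronwall_ineq}, use \eqref{eq:A2} to make the near-diagonal kernel term's coefficient small enough to absorb the running supremum, use \eqref{eq:A3} to bound the remaining term by a constant times the previous block's supremum, and iterate over the (fixed, $n$-independent) number of blocks. The intermediate Jensen step and the $\delta_n^{(q-2)H}$ prefactor (which should read $(t-s)^{(q-2)H}\le T^{(q-2)H}$) are an unnecessary detour and a minor slip, respectively, but they do not affect the argument.
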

	\begin{proof}
	Let $m \ge 1$ be a fixed integer, and denote $t_k := k T/m$.
	Let us define the nondecreasing function $\bar f_n(t) := \sup_{0 \le s \le t} f_n(s)$.
	We can then apply \eqref{eq:Gronwall_ineq} for $s = t_k$ and $t \in [t_k, t_{k+1}]$ to obtain that
	$$
		f_n(t) ~\le~ (C_0 + C_1 + C_2) +  \big( C_0 + C_1 \big)  \bar f_n(t_k)  + C_2 \bar f_n(t_{k+1}) , ~\mbox{for all}~ t\in [t_k, t_{k+1}],
	$$
	with some positive constants $C_1$, $C_2$ satisfying that, for each $k=0, \cdots, m$, and $t \in [t_k, t_{k+1}]$,
	$$
		C_1 \ge  C_0 \Big( \int_0^{t_k}  \big|  K(t- \eta_n(r)) -  K(t_k - \eta_n(r)) \big|^2 dr \Big)^{q/2},
		~~
		C_2  \ge C_0 \Big( \int_{t_k}^{t} K(t - \eta_n(r))^2 dr \Big)^{q/2}.
	$$
	Without loss of generality, we can assume that $C_0 \ge 1$, so that
	$$
		\bar f_n(t_{k+1}) ~\le~ (C_0 + C_1 + C_2) + \big( C_0 + C_1 \big)  \bar f_n(t_k) +  C_2 \bar f_n(t_{k+1}).
	$$
	Moreover, in view of \eqref{eq:A2}-\eqref{eq:A3}, one can choose a fixed $m \ge 1$ big enough
	so that $C_2 < 1$.
	Then
	$$
		\bar f_n(t_{k+1}) 
		~\le~
		 \frac{C_0 + C_1}{1- C_2}  \bar f_n(t_k) + \frac{C_0 + C_1+C_2}{1- C_2},
		~\mbox{for all}~k =0, \cdots, m-1,
	$$
	and, noticing that $C_0 + C_1 \neq 1- C_2$,
 	it follows that
	$$
		\bar f_n(T)  = \bar f_n(t_m) ~\le~   \Big( \sup_{n\ge 1} f_n(0) \Big) a^m +  \frac{a^m-1}{a-1} b,
		~\mbox{with constants}~
		a := \frac{C_0 + C_1}{1- C_1}, ~ b:= \frac{C_0 + C_1+C_2}{1- C_1}.
	$$
	Notice that the r.h.s. of the above inequality is independent of $n \ge 1$, which is enough to conclude the proof.
	\end{proof}

	\begin{proposition}\label{prop:schemeRoughHest}
		For each $p \ge 2$, there exist a constant $C_p > 0$ such that for all $0\leq s\leq t\leq T$ and $n\geq 1$, 
		\begin{align}
			\EE[|\hat{V}_t^n|^p]&\leq C_p, \label{eq:supEVp}\\
			\EE[|\hat{V}_t^n -\hat{V}_s^n|^p] &\leq C_p (t-s)^{pH}\label{eq:Xn},\\
			\EE[|\hat{Z}_t^n -\hat{Z}_s^n|^p] &\leq C_p (t-s)^{\frac{p}{2}}. \label{eq:Zn}
		\end{align}
	\end{proposition}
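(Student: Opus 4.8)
The three estimates are best proved in the order \eqref{eq:supEVp} $\to$ \eqref{eq:Zn} $\to$ \eqref{eq:Xn}, with \eqref{eq:supEVp} being the core. I would first observe that in $\hat V^n = (Y^n, V^n)$ only the component $V^n$ solves a genuine discretized Volterra equation, $V^n_t = V_0 + \int_0^t K(t-\eta_n(r))\, d\hat Z^{n,2}_r$, while $Y^n_t = Y_0 + \hat Z^{n,1}_t$ is a plain integrated process; so it suffices to control $V^n$, and the bounds for $Y^n$ (hence for $\hat V^n$) follow from those of $V^n$ and $\hat Z^n$ by the Burkholder--Davis--Gundy (BDG) inequality and Hölder's inequality, since $b$ has linear growth and $|\sigma|^2(\,\cdot\,) = c\,(\,\cdot\,)_+$.

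For \eqref{eq:supEVp} I would apply Lemma \ref{lem:Gronwall_var} to $f_n(t) := \EE[|V^n_t|^p]$ (with $q=p$). To check \eqref{eq:Gronwall_ineq}, for $s\le t$ write
$$
 V^n_t - V^n_s = \int_s^t K(t-\eta_n(r))\, d\hat Z^{n,2}_r + \int_0^s \big( K(t-\eta_n(r)) - K(s-\eta_n(r)) \big)\, d\hat Z^{n,2}_r ,
$$
and split each integral into its drift part (integrand $\theta - \lambda (V^n_{\eta_n(r)})_+$, against $dr$) and its martingale part (integrand $\nu\sqrt{(V^n_{\eta_n(r)})_+}$, against $dW_r$). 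For the martingale parts, BDG followed by Minkowski's integral inequality turns $\EE\big[(\int K(\cdot)^2 (V^n_{\eta_n(r)})_+\, dr)^{p/2}\big]$ into $\big(\int K(\cdot)^2 (1+\EE[|V^n_{\eta_n(r)}|^p]^{2/p})\, dr\big)^{p/2}$, which is exactly the form of the two integral terms in \eqref{eq:Gronwall_ineq} (using $x\le 1+x^2$). For the drift parts, since $|\theta-\lambda v_+|\le C(1+|v|)$ and $K$ is non-increasing with $\int_s^t K(t-\eta_n(r))\,dr \le \int_0^{t-s}K(u)\,du$, Jensen's inequality with respect to the finite measure $K(t-\eta_n(r))\mathbf 1_{[s,t]}\,dr$ (and its analogue on $[0,s]$) shows these terms carry extra powers of $t-s$ and are genuinely lower order, so they are dominated by the same quantities. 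Adding the contribution of the $V^n_s$-term, which is $C(1+f_n(s))$, yields \eqref{eq:Gronwall_ineq}, and Lemma \ref{lem:Gronwall_var} gives $\sup_{n\ge1}\sup_{t\le T}\EE[|V^n_t|^p]<\infty$. Bounding $\hat Z^{n,1}_t = \int_0^t(-\tfrac12 (V^n_{\eta_n(r)})_+)\,dr + \int_0^t \sqrt{(V^n_{\eta_n(r)})_+}\,(\cdots)\,d\hat W_r$ by Hölder and BDG then gives $\sup_{n,t}\EE[|Y^n_t|^p]<\infty$, hence \eqref{eq:supEVp}.

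With \eqref{eq:supEVp} in hand, \eqref{eq:Zn} follows directly: $\hat Z^n_t-\hat Z^n_s = \int_s^t b^n(r,\hat V^n)\,dr + \int_s^t \sigma^n(r,\hat V^n)\,d\hat W_r$, and Hölder on the drift together with BDG on the martingale part, using the linear growth of $b$ and of $|\sigma|^2$ and the uniform bound \eqref{eq:supEVp}, gives $\EE[|\hat Z^n_t-\hat Z^n_s|^p]\le C_p\big((t-s)^p + (t-s)^{p/2}\big)\le C_p(t-s)^{p/2}$. For \eqref{eq:Xn} I would reuse the decomposition of $V^n_t-V^n_s$ above, now with all moments $\EE[|V^n_{\eta_n(r)}|^p]$ uniformly bounded: the martingale part over $[s,t]$ is $\le C_p(\int_s^t K(t-\eta_n(r))^2\,dr)^{p/2}\le C_p(t-s)^{pH}$ by \eqref{eq:A2}, the drift part over $[s,t]$ is $\le C_p(t-s)^{pH+p/2}$ by Cauchy--Schwarz and \eqref{eq:A2}, and the two pieces over $[0,s]$ are $\le C_p(\int_0^s|K(t-\eta_n(r))-K(s-\eta_n(r))|^2\,dr)^{p/2}\le C_p(t-s)^{pH}$ by \eqref{eq:A3}; thus $\EE[|V^n_t-V^n_s|^p]\le C_p(t-s)^{pH}$. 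Since $Y^n_t-Y^n_s=\hat Z^{n,1}_t-\hat Z^{n,1}_s$, \eqref{eq:Zn} gives $\EE[|Y^n_t-Y^n_s|^p]\le C_p(t-s)^{p/2}\le C_p T^{p(1/2-H)}(t-s)^{pH}$, and combining the two components yields \eqref{eq:Xn}.

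The main obstacle is the moment bound \eqref{eq:supEVp}, and more precisely the task of casting the estimates for $V^n_t$ into the precise form \eqref{eq:Gronwall_ineq} required by Lemma \ref{lem:Gronwall_var}. The martingale term falls into place via BDG and Minkowski's integral inequality, but the drift term, because of its convolution structure, does not split as cleanly; controlling it requires exploiting that $K\in L^1([0,T])$ and is non-increasing (so the drift picks up extra powers of $t-s$ and is lower order), combined with Jensen's and Hölder's inequalities. Everything else is routine.
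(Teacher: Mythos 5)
Your proposal is correct and follows essentially the same route as the paper: the kernel-increment decomposition of $V^n_t-V^n_s$, BDG plus H\"older/Minkowski to reach the form of Lemma \ref{lem:Gronwall_var} for the uniform moments, then \eqref{eq:A2}--\eqref{eq:A3} for the H\"older estimate \eqref{eq:Xn} and a standard semimartingale bound for \eqref{eq:Zn}. The only (cosmetic) difference is that you treat the components $Y^n$ and $V^n$ separately, whereas the paper runs the same argument on the two-dimensional process $\hat V^n$ with the matrix kernel $\hat K$ in one stroke.
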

	\begin{proof}
	(i) First, one has by Equation \eqref{eq:schemeRoughHeston_SDE} that, for all $0 \le s \le t \le T$,
	\begin{equation} \label{eq:V_st}
	\begin{split}
		\hat{V}^n_t-\hat{V}^n_s=&\int_0^s \Big(\hat{K}(t-\enr)-\hat{K}(s-\enr)\Big)b^n (r,\hat{V}^n)~dr+\int_s^t \hat{K}(t- \enr) \, b^n(r,\hat{V}^n)~dr\\
		&+\int_0^s \Big(\hat{K}(t-\enr)-\hat{K}(s-\enr)\Big)\sigma^n(r,\hat{V}^n)~dW_r+\int_s^t \hat{K}(t-\enr)\sigma^n(r,\hat{V}^n)~dW_r.
	\end{split}
	\end{equation}
	Then using the Burkholder-Davis-Gundy inequality, there is a constant $C > 0$ depending only on $p \ge 2$ such that
	\begin{equation*}
	\begin{split}
		\EE\big[ \big|\hat{V}^n_t-\hat{V}_s^n \big|^p\big]
		&\leq 
		C\, \EE\Big[ \Big|\int_0^s \Big(\hat{K}(t-\enr)-\hat{K}(s-\enr)\Big) b^n (r,\hat{V}^n)~dr \Big|^p \Big] \\
		&~~~+C\, \EE\Big[ \Big|\int_0^s \Big(\hat{K}(t-\enr)-\hat{K}(s-\enr)\Big)^2\sigma^n(r,\hat{V}^n)^2~dr \Big|^{\frac{p}{2}} \Big]\\
		&~~~+C\, \EE\Big[ \Big|\int_s^t \hat{K}(t-r) \, b^n(r,\hat{V}^n)~dr \Big|^p \Big]
		~+C\, \EE\Big[ \Big|\int_s^t \hat{K}(t-\enr)^2\, \sigma^n(r,\hat{V}^n)^2~dr \Big|^\frac{p}{2} \Big].
	\end{split}
	\end{equation*}
	Applying H\"older's inequality and using the linear growth of $b^n$ and $\sigma^n$, 
	there is a positive constant $C > 0$ (independent of $n \ge 1$), such that
	\begin{align*}
		\EE\big[ \big|\hat{V}_t^n - \hat{V}_s^n \big|^p \big]
		~\leq~&
		C \EE \Big[   \Big|\int_0^s \Big(\hat{K}(t-\enr)-\hat{K}(s-\enr)\Big)^2  \big( 1 + (\hat{V}^n_{\enr} )^2 \big)~dr \Big|^{\frac{p}{2}} \Big] \\
		&+
		C \EE\Big[ \Big|\int_s^t \hat{K}(t-\enr)^2\,  \big( 1 + (\hat{V}^n_{\enr} )^2 \big) ~dr \Big|^\frac{p}{2} \Big].
	\end{align*}
	Applying now Minkowski's integral inequality yields
	\begin{equation}\label{eq:incrementsV}
		\begin{split}
			\EE\big[ \big|\hat{V}_t^n - \hat{V}_s^n \big|^p \big]
			~\leq~&
			C  \Big( \int_0^s \Big(\hat{K}(t-\enr)-\hat{K}(s-\enr)\Big)^2  \big( 1 + \EE\big[|\hat{V}^n_{\enr} |^p\big]^{\frac{2}{p}} \big)~dr \Big)^{\frac{p}{2}} \\
			&+
			C \Big(\int_s^t \hat{K}(t-\enr)^2\,  \big( 1 + \EE\big[|\hat{V}^n_{\enr} |^p\big]^{\frac{2}{p}} \big) ~dr \Big)^\frac{p}{2} .
		\end{split}
	\end{equation}
	It follows that there is a positive constant $C_0 > 0$ (independent of $n \ge 1$) such that
	\begin{align*}
		\EE\big[ \big|\hat{V}_t^n \big|^p \big]
		~\leq~&
		C \EE\big[ \big|\hat{V}^n_s \big|^p \big]
		+
		C \EE \big[ \big| \hat{V}^n_t - \hat{V}^n_s \big|^p \big] \\
		~\leq~&
		C_{0} \EE\big[ \big|\hat{V}^n_s \big|^p \big] + C_{0}  \Big( \int_0^s \Big(\hat{K}(t-\enr)-\hat{K}(s-\enr)\Big)^2  \big( 1 + \EE\big[|\hat{V}^n_{\enr} |^p\big]^{\frac{2}{p}} \big)~dr \Big)^{\frac{p}{2}} \\
			&+
			C_{0} \Big(\int_s^t \hat{K}(t-\enr)^2\,  \big( 1 + \EE\big[|\hat{V}^n_{\enr} |^p\big]^{\frac{2}{p}} \big) ~dr \Big)^\frac{p}{2} .
		\end{align*}
	
	Let $f_n(t) :=  \EE \big[ \big|\hat{V}^n_t \big|^p \big]$, one can then apply Lemma \ref{lem:Gronwall_var} to obtain that
	$$
		\sup_{n \ge 1} \sup_{0 \le t \le T} \EE \big[ \big| \hat{V}^n_t \big|^p \big] 
		~=~
		\sup_{n \ge 1} \sup_{0 \le t \le T} f_n(t)
		 ~<~ \infty.
	$$

	\noindent (ii) Plugging the estimate \eqref{eq:supEVp} in \eqref{eq:incrementsV}, one deduces from \eqref{eq:A2}-\eqref{eq:A3}  that, for some constant $C_p > 0$ independent of $n$,
	\begin{equation*}
		\EE\big[ \big|\hat{V}^n_t-\hat{V}_s^n \big|^p\big]
		\leq C_p (t-s)^{pH}.
	\end{equation*}

	\noindent (iii) Notice that $Z^n$ is a semimartingale, with the uniform estimations in \eqref{eq:supEVp}, the estimation in \eqref{eq:Zn} is standard.
\end{proof}

	As a consequence of Proposition \ref{prop:schemeRoughHest}, we immediately obtain the following tightness result.
	\begin{corollary}\label{tightXnZn}
		The sequence $\{(\hat{V}^n,\hat{Z}^n)\}_{n \ge 1}$ is tight in $C([0,T],\R^2) \times C([0,T],\R^2)$.
	\end{corollary}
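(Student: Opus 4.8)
The plan is to obtain tightness directly from the moment estimates of Proposition~\ref{prop:schemeRoughHest} through the Kolmogorov--Chentsov tightness criterion (see e.g.\ \cite{karatzas1998brownian, jacod2013limit}). Recall that a sequence $\{\xi^n\}_{n\ge1}$ of random variables with values in $C([0,T],\R^d)$ is tight provided the laws of the initial values $\{\xi^n_0\}_{n\ge1}$ are tight and there exist constants $p>0$, $\varepsilon>0$ and $C>0$, all independent of $n$, such that $\EE\big[|\xi^n_t-\xi^n_s|^p\big]\le C(t-s)^{1+\varepsilon}$ for all $0\le s\le t\le T$. The substance of the argument is entirely contained in the increment bounds \eqref{eq:Xn}--\eqref{eq:Zn}, so the corollary is essentially immediate; there is no real obstacle.

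First I would fix $p>2$ large enough that $pH>1$, which is possible precisely because $H>0$ under Assumption~\ref{assum:main}. For the component $\{\hat V^n\}_{n\ge1}$: its initial value is the deterministic vector $\hat V_0$, so initial tightness is trivial, and the increment estimate \eqref{eq:Xn} (whose proof already invokes the uniform moment bound \eqref{eq:supEVp}) reads $\EE\big[|\hat V^n_t-\hat V^n_s|^p\big]\le C_p(t-s)^{pH}$, i.e.\ the criterion holds with exponent $1+\varepsilon=pH$ and $\varepsilon:=pH-1>0$. Hence $\{\hat V^n\}_{n\ge1}$ is tight in $C([0,T],\R^2)$. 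For $\{\hat Z^n\}_{n\ge1}$: the initial value $\hat Z^n_0=0$ is deterministic, and \eqref{eq:Zn} gives $\EE\big[|\hat Z^n_t-\hat Z^n_s|^p\big]\le C_p(t-s)^{p/2}$ with $p/2>1$ since $p>2$, so $\{\hat Z^n\}_{n\ge1}$ is also tight in $C([0,T],\R^2)$.

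It then remains only to pass from componentwise tightness to tightness of the pair, which is routine: given $\varepsilon>0$, choose compact sets $\mathcal K_1,\mathcal K_2\subset C([0,T],\R^2)$ with $\PP(\hat V^n\notin\mathcal K_1)\le\varepsilon/2$ and $\PP(\hat Z^n\notin\mathcal K_2)\le\varepsilon/2$ for all $n\ge1$; then $\mathcal K_1\times\mathcal K_2$ is compact in the product space and $\PP\big((\hat V^n,\hat Z^n)\notin\mathcal K_1\times\mathcal K_2\big)\le\varepsilon$ for all $n\ge1$, which yields the claim. The only point worth flagging is that $p$ must be chosen strictly larger than $1/H$ in order to beat the critical Kolmogorov exponent, which is exactly where the standing assumption $H>0$ is used.
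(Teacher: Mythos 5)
Your proof is correct and follows essentially the same route as the paper: apply the moment and increment estimates of Proposition~\ref{prop:schemeRoughHest} with $p$ chosen large enough that $pH>1$ and $p/2>1$, and invoke the Kolmogorov-type tightness criterion (Theorem~2.4.11 of \cite{karatzas1998brownian}) for each component. Your explicit treatment of passing from componentwise to joint tightness is a small additional detail the paper leaves implicit, but it is not a different argument.
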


	\begin{proof}
	By Proposition \eqref{prop:schemeRoughHest},
	we have by choosing $p$ large enough  that
	$$
		\EE[|\hat{V}_t^n-\hat{V}_s^n|^p]\leq C(t-s)^{1+\alpha}
		~~\mbox{and}~
		\EE[|\hat{Z}_t^n-\hat{Z}_s^n|^p]\leq C(t-s)^{1+\alpha'},
	$$
	for some positive constants $C,\, \alpha,\, \alpha'$. 
	Then by Theorem 2.4.11 of \cite{karatzas1998brownian}, one has that $\{\hat{V}^n\}_{n \ge 1}$ and $\{\hat{Z}^n\}_{n \ge 1}$ are tight in $C([0,T],\R^2)$.
	\end{proof}

	Then we identify the equation satisfied by the limit of $\{(\hat{V}^n,\hat{Z}^n)\}_{n \ge 1}$, by considering an appropriate martingale problem.

	\begin{lemma}\label{lem:convbnan}
		Let $A$ be a compact subset of $C([0,T],\R^2)$ under the uniform norm, then
		$$
			\sup_{ \xbf \in A}  \max_{0 \le t \le T} 
				\big( |a^n(t, \xbf) - a( \xbf_t) | +|b^n(t, \xbf) - b( \xbf_t) | \big)
			\longrightarrow 0,
			~\mbox{as}~
			n \longrightarrow \infty.
		$$ 
	\end{lemma}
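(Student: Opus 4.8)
The plan is to reduce the statement to a Lipschitz estimate combined with the uniform equicontinuity of the compact set $A$. The key preliminary observation is that, although $\sigma$ is only $\tfrac12$-H\"older because of the square root $\sqrt{v_+}$, the two coefficients actually appearing in the statement are \emph{globally Lipschitz} on $\R^2$. Indeed $b(y,v) = (-\tfrac12 v,\ \theta - \lambda v_+)$ is Lipschitz since $v \mapsto v_+$ is $1$-Lipschitz, and a direct computation gives $a(y,v) = \sigma\sigma^\top(y,v) = v_+ \, M$ with $M$ the constant matrix $\left(\begin{smallmatrix} 1 & \rho\nu \\ \rho\nu & \nu^2\end{smallmatrix}\right)$, so that $a$ depends on $(y,v)$ only through the $1$-Lipschitz map $v\mapsto v_+$ and is linear in it. Let $L>0$ be a common Lipschitz constant for $a$ and $b$.

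Next, recalling from \eqref{def:operators} that $b^n(t,\xbf) = b(\xbf(\ent))$ and $a^n(t,\xbf) = a(\xbf(\ent))$, for every $\xbf \in C([0,T],\R^2)$ and $t \in [0,T]$ one bounds
$$
|a^n(t,\xbf) - a(\xbf_t)| + |b^n(t,\xbf) - b(\xbf_t)| \;\le\; 2L\,|\xbf(\ent) - \xbf_t| \;\le\; 2L\,\omega_\xbf(\dn),
$$
where $\omega_\xbf(\delta) := \sup\{|\xbf(u)-\xbf(r)| : |u-r|\le\delta\}$ is the modulus of continuity of $\xbf$ and we used $|\ent - t|\le\dn$.

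Finally, I would invoke the Arzel\`a--Ascoli theorem: a compact subset $A$ of $C([0,T],\R^2)$ (for the uniform norm) is uniformly equicontinuous, hence $\sup_{\xbf\in A}\omega_\xbf(\delta) \to 0$ as $\delta\downarrow0$. Taking $\sup_{\xbf\in A}\max_{0\le t\le T}$ in the previous display yields
$$
\sup_{\xbf\in A}\ \max_{0\le t\le T}\big( |a^n(t,\xbf)-a(\xbf_t)| + |b^n(t,\xbf)-b(\xbf_t)| \big) \;\le\; 2L\,\sup_{\xbf\in A}\omega_\xbf(\dn),
$$
and since $\dn = |\pi_n| \to 0$ as $n\to\infty$, the right-hand side vanishes in the limit, which is the claim.

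There is no serious obstacle here; the argument is essentially routine. The only point that deserves attention is the preliminary reduction, i.e. verifying that the square-root non-Lipschitzianity of $\sigma$ does not propagate to $a$ (after forming $\sigma\sigma^\top$ only a linear dependence on $v_+$ survives) nor to $b$, so that a genuine Lipschitz bound --- rather than merely a H\"older one --- is available; a H\"older bound would in fact also close the argument. One should likewise keep in mind the standing assumption $|\pi_n|\to0$, which is used implicitly throughout the paper.
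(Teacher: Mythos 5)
Your proof is correct and takes essentially the same route as the paper: both arguments reduce the claim to the bound $|\xbf(\ent)-\xbf_t|\le \sup_{|u-r|\le \dn}|\xbf_u-\xbf_r|$ and then combine the Arzel\`a--Ascoli equicontinuity of the compact set $A$ with $\dn\to 0$. The only (harmless) difference is that you exploit the global Lipschitz continuity of $a$ and $b$ (indeed $a(y,v)=v_+\,M$ for a constant matrix $M$), whereas the paper only uses their uniform continuity on the ball $\{|y|\le M\}$ supplied by the uniform boundedness of $A$.
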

	\begin{proof}
	Let $A$ be a compact subset of $C([0,T],\R^2)$. 
	Recall from the Arzel\`a-Ascoli Theorem that the following properties hold:
	$$ 
		M := \underset{\underset{0\leq t\leq T}{\xbf \in A}}{\sup} |\xbf_t| <\infty,
		~~~
		\lim_{\delta\rightarrow 0} \sup_{\xbf \in A} m(\xbf,\delta)=0,
		~\mbox{with}~
		m(\xbf,\delta) :=\max\limits_{|s-t|\leq \delta,\, 0\leq s\leq t\leq T}|\xbf_s- \xbf_t|.
	$$
	Since $a$ and $b$ are uniformly continuous on $\{y\in \mathbb{R}^2;~|y|\leq M\}$, one can find, for any $\eps > 0$, an integer $N \ge 1$ such that
	$$
		\sup_{ \xbf \in A}  \max_{0 \le t \le T} 
				\big( |a^n(t, \xbf) - a( \xbf_t) | +|b^n(t, \xbf) - b( \xbf_t) | \big)
		\leq \eps ,~ \mbox{for all}~ n \geq N.
	$$
	This concludes the proof.
	\end{proof}

	\vspace{0.5em}

	The next lemma is a convergence result for the operator $\Lc^n$ defined in \eqref{def:AnA}, which is in line with Lemmas 3.5 and 3.6 of \cite{abi2019weak}.

	\begin{lemma}\label{conv}
		Let $(\xbf^n, \zbf^n)\longrightarrow  (\xbf, \zbf)$ in $C([0,T],\R^2) \times C([0,T],\R^2)$, then for any $f\in C^2_b(\R^2)$, the following uniform convergence holds:
		$$
			\lim_{n\to \infty} \sup_{t\in[0,T]} \Big| \int_0^t \Lc^n_s f(\xbf^n, \zbf^n)~ds - \int_0^t \Lc_sf(\xbf, \zbf)~ds \Big| = 0 .
		$$
	\end{lemma}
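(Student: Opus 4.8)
The plan is to split the integrand into a ``discretization'' part and a ``continuity'' part. For $s \in [0,T]$ write
$$\Lc^n_s f(\xbf^n, \zbf^n) - \Lc_s f(\xbf, \zbf) ~=~ \mathrm{I}^n_s + \mathrm{II}^n_s,
~~\text{with}~~
\mathrm{I}^n_s := \Lc^n_s f(\xbf^n, \zbf^n) - \Lc_s f(\xbf^n, \zbf^n),
~~\mathrm{II}^n_s := \Lc_s f(\xbf^n, \zbf^n) - \Lc_s f(\xbf, \zbf).$$
Since $\int_0^t |g(s)|\,ds \le T \sup_{0 \le s \le T} |g(s)|$ for every $t \in [0,T]$, it is enough to prove that $\sup_{0 \le s \le T} |\mathrm{I}^n_s| \to 0$ and $\sup_{0 \le s \le T} |\mathrm{II}^n_s| \to 0$ as $n \to \infty$.

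For the first term, observe that $A := \{\xbf^n : n \ge 1\} \cup \{\xbf\}$ is a compact subset of $C([0,T],\R^2)$, being a convergent sequence together with its limit. Recalling from \eqref{def:AnA} that $\mathrm{I}^n_s = \big( b^n(s, \xbf^n) - b(\xbf^n_s) \big) \cdot \nabla f(\zbf^n_s) + \tfrac12 \text{Tr}\big( ( a^n(s, \xbf^n) - a(\xbf^n_s) ) \nabla^2 f(\zbf^n_s) \big)$, and using that $\nabla f$ and $\nabla^2 f$ are bounded because $f \in C^2_b(\R^2)$, there is a constant $C_f>0$ with
$$\sup_{0 \le s \le T} |\mathrm{I}^n_s| ~\le~ C_f \sup_{\xbf \in A} \max_{0 \le t \le T} \big( |b^n(t, \xbf) - b(\xbf_t)| + |a^n(t, \xbf) - a(\xbf_t)| \big),$$
and the right-hand side tends to $0$ as $n \to \infty$ by Lemma \ref{lem:convbnan} applied to the compact set $A$.

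For the second term, set $M := \sup_{n \ge 1} \sup_{0 \le s \le T} \big( |\xbf^n_s| + |\zbf^n_s| \big) + \sup_{0 \le s \le T} \big( |\xbf_s| + |\zbf_s| \big)$, which is finite since convergent sequences in $C([0,T],\R^2)$ are bounded. On the closed ball of radius $M$ in $\R^2$ the continuous maps $b$, $a$ are bounded and, together with $\nabla f$ and $\nabla^2 f$, uniformly continuous; denoting by $\omega(\cdot)$ a common non-decreasing modulus of continuity, a routine add-and-subtract argument on the four products defining $\Lc_s f$ yields a constant $C = C(M, f)$ with
$$\sup_{0 \le s \le T} |\mathrm{II}^n_s| ~\le~ C\, \omega\Big( \sup_{0 \le s \le T} |\xbf^n_s - \xbf_s| + \sup_{0 \le s \le T} |\zbf^n_s - \zbf_s| \Big),$$
which tends to $0$ since $\xbf^n \to \xbf$ and $\zbf^n \to \zbf$ uniformly on $[0,T]$ and $\omega(0^+) = 0$. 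Combining the two estimates proves the claim.

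The argument is a soft continuity--compactness statement and I do not expect a genuine obstacle; the only point requiring care is that all the estimates be uniform in $s \in [0,T]$, which is exactly what the boundedness of the convergent sequences together with Lemma \ref{lem:convbnan} (itself an Arzel\`a--Ascoli argument) provide.
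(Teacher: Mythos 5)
Your proof is correct and follows essentially the same route as the paper: both bound $\sup_t\int_0^t|\cdot|\,ds$ by an integral/sup over $[0,T]$, handle the discretization error via Lemma \ref{lem:convbnan} applied to the compact set formed by the convergent sequence and its limit, and handle the remaining error by boundedness and (uniform) continuity of $b$, $a$, $\nabla f$, $\nabla^2 f$ on a ball containing all the paths. The only differences are cosmetic: the paper splits first into drift and trace terms and treats the latter with Lipschitz continuity of $a$ plus dominated convergence, whereas you split into discretization versus continuity parts and use a common modulus of continuity, which is equally valid.
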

	\begin{proof}
	Let us consider the following decomposition:
	\begin{align*}
		&\int_0^t \big|\Lc^n_s f(\xbf^n, \zbf^n)- \Lc_s f(\xbf, \zbf)\big|~ds\\ 
		&\leq 
		\int_0^T \Big|b^n(s, \xbf^n) \cdot \nabla f(\zbf^n_s)-b(\xbf_s) \cdot \nabla f(\zbf_s)\Big|~ds
		+
		\int_0^T \Big| \mathrm{Tr} \big( a^n(s, \xbf^n)\nabla^2 f(\zbf^n_s) -a(\xbf_s)\nabla ^2 f(\zbf_s) \big) \Big|~ds\\
		&=: I_1+I_2.
	\end{align*}
	We will detail the convergence of $I_2$, as the convergence of $I_1$ follows by a simpler argument. We get
	\begin{align*}
		I_2 &\leq \int_0^T \big| \mathrm{Tr} \big( \big(a^n(s, \xbf^n)-a( \xbf^n_s)\big)\nabla^2 f(\zbf^n_s) \big) \big|~ds\\
		&~+\int_0^T \big| \mathrm{Tr} \big( \big(a(\xbf^n_s)-a(\xbf_s)\big)\nabla^2 f(\zbf^n_s) \big) \big|~ds\\
		&~+\int_0^T \big| \mathrm{Tr} \big( a(\xbf_s)~(\nabla^2 f(\zbf^n_s)-\nabla^2 f(\zbf_s)) \big) \big|~ds\\
		&=: I_{21} +I_{22}+ I_{23}.
	\end{align*}
	Since $(\xbf^n)$ converges uniformly, it lies in a compact of $C([0,T],\R^2)$. 
	Hence by Lemma \ref{lem:convbnan}, $a^n(t, \xbf^n)$ converges to $a(\xbf_t)$ uniformly in $t \in [0,T]$. 
	Then with the boundedness of $\nabla^2 f$, this yields the convergence $I_{21} \longrightarrow 0$.
	The convergence of $I_{22}$ to $0$ is a direct consequence of the Lipschitz continuity of $a$, 
	the boundedness of $\nabla f$ and the uniform convergence of $\xbf^n$ towards $\xbf$. 
	The boundedness of $a$, continuity of $\nabla^2 f$ and uniform convergence of $\zbf^n$ then give, 
	by an application of the dominated convergence theorem, the convergence of $I_{23}$ to $0$.
	\end{proof}

	Now we define a martingale problem, which adapts the one from \cite[Def. 3.1]{abi2019weak} to our framework without jumps.
	It extends the usual definition of a martingale problem to take into account the non-Markovian property because of the kernel $K$. 
	As in the classical setting, there is equivalence between being a weak solution of an SDE and being a solution to the associated martingale problem.
	More discussions are provided in Remark \ref{rem:MartingalePb}. 
	We denote by $C_c^2(\mathbb{R}^2)$ the space of twice continuously differentiable functions which are compactly supported.

	\begin{definition} \label{def:MartPb}
		A solution of the (local) martingale problem for $(V_0,K, \Lc)$, where $\Lc$ is the operator given by Equation \eqref{def:AnA}, 
		is a pair $(V,Z)$ of $C([0,T],\R^2)$-valued processes defined on a filtered probability space, such that $V$ is predictable, 
		$Z$ is a continuous semimartingale, the process
		$$
			M_t^f=f(Z_t)-\int_0^t \Lc_s f(V,Z)~ds,~t\in [0,T],
		$$
		is a local martingale for every $f\in C_c^2(\mathbb{R}^2)$, and one has the equality
		\begin{equation} \label{eq:martPb_VZ}
			V_t=V_0 + \int_0^t K(t-s) \, dZ_s, \quad t\in[0,T], ~\mbox{a.s.}
		\end{equation}
	\end{definition}

	\begin{proposition}\label{prop:MartingaleProb}
		Any weak limit of $(\hat{V}^n,\hat{Z}^n)$ is a solution of the local martingale problem for $(\hat{V}_0, \hat{K}, \Lc)$.
	\end{proposition}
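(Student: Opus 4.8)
The plan is to pass to the limit in the martingale characterization \eqref{eq:schemeRoughHeston_SDE} along a weakly convergent subsequence, using the Skorokhod representation theorem to turn weak convergence into almost sure convergence. By Corollary \ref{tightXnZn} the sequence $\{(\hat V^n, \hat Z^n)\}_{n\ge 1}$ is tight, so any weak limit point exists; fix such a limit $(\hat V, \hat Z)$ and work on a common probability space where (a relabeled subsequence of) $(\hat V^n, \hat Z^n)$ converges almost surely to $(\hat V, \hat Z)$ in $C([0,T],\R^2)\times C([0,T],\R^2)$. We must verify the three requirements in Definition \ref{def:MartPb}: that $M^f_t := f(\hat Z_t) - \int_0^t \Lc_s f(\hat V, \hat Z)\,ds$ is a local martingale for all $f \in C^2_c(\R^2)$, that $\hat V$ is predictable (it is continuous and adapted, being an a.s. limit of adapted continuous processes), and that the Volterra identity \eqref{eq:martPb_VZ} holds with kernel $\hat K$.

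First I would establish the martingale property. From \eqref{eq:schemeRoughHeston_SDE}, $\hat Z^n_t = \int_0^t b^n(s,\hat V^n)\,ds + \int_0^t \sigma^n(s,\hat V^n)\,d\hat W_s$, and by the standard It\^o/generator computation (using $f\in C^2_c$ so that all terms are bounded), the process
\[
	M^{n,f}_t \;:=\; f(\hat Z^n_t) - \int_0^t \Lc^n_s f(\hat V^n, \hat Z^n)\,ds
\]
is a martingale for each $n$, with respect to the filtration generated by $\hat W$. To upgrade this to a statement about the limit, I would show that for any $0 \le s_1 < \cdots < s_m \le s < t$ and any bounded continuous $g:(\R^2)^m \to \R$,
\[
	\EE\Big[\big(M^{f}_t - M^{f}_s\big)\, g\big(\hat Z_{s_1},\dots,\hat Z_{s_m}\big)\Big] = 0,
\]
by passing to the limit in the corresponding identity for $M^{n,f}$. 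The convergence $f(\hat Z^n_\cdot) \to f(\hat Z_\cdot)$ is immediate from continuity and a.s. uniform convergence; the convergence $\int_0^\cdot \Lc^n_s f(\hat V^n,\hat Z^n)\,ds \to \int_0^\cdot \Lc_s f(\hat V,\hat Z)\,ds$ uniformly in $t$ is exactly the content of Lemma \ref{conv}. Uniform integrability needed to exchange limit and expectation comes from the moment bound \eqref{eq:supEVp} together with the linear-growth estimate \eqref{NormEstimate} on $\Lc^n_s f$ (so the time integral is bounded in $L^p$ uniformly in $n$), and $f$ being bounded. This yields that $M^f$ is a martingale in the limit filtration, in particular a local martingale.

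Second I would verify the Volterra identity. We have $\hat V^n_t = \hat V_0 + \int_0^t \hat K^n(t,s)\,d\hat Z^n_s = \hat V_0 + \int_0^t \hat K(t-\ens)\,d\hat Z^n_s$, and we want to pass to the limit to get $\hat V_t = \hat V_0 + \int_0^t \hat K(t-s)\,d\hat Z_s$. Here the obstacle is that the stochastic-integral-against-$\hat Z^n$ operation is not continuous under mere uniform convergence of the integrators, and the kernel $\hat K(t-\ens)$ is being evaluated near the singularity $s\approx t$. I would handle this by integrating by parts in the Volterra integral (using that $K$ is continuous, non-increasing and of bounded variation away from $0$, or by an approximation argument replacing $K$ by smooth kernels), rewriting $\int_0^t K(t-\ens)\,dZ^n_s$ in terms of $\int_0^t Z^n_r\, dK(\cdot)$-type expressions plus boundary terms, where only uniform convergence of $Z^n$ and the moment/H\"older estimates \eqref{eq:Xn}--\eqref{eq:Zn} are needed; alternatively, one can pass to the limit directly using that $\hat Z^n\to\hat Z$ together with the uniform bound $\EE[|\hat Z^n_t - \hat Z^n_s|^p]\le C_p(t-s)^{p/2}$ of \eqref{eq:Zn} to control the tail near the singularity, mimicking the argument in \cite{abi2019weak}. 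This step — the continuity of the Volterra map under the relevant topology — is the main difficulty; the martingale-problem part is essentially routine given Lemmas \ref{lem:convbnan} and \ref{conv}. Combining the three verified properties gives that $(\hat V,\hat Z)$ solves the local martingale problem for $(\hat V_0,\hat K,\Lc)$, as claimed.
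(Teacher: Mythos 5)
Your overall architecture matches the paper's (tightness, Skorokhod representation, Lemma \ref{conv} to pass to the limit in the martingale relation, then the Volterra identity), but you diverge in two places, both defensibly. For the martingale part, the paper localizes: it stops at $\tau_m(\hat V^n)=\inf\{t:|\hat V^n_t|\ge m\}\wedge T$ so that $M^n$ is a uniformly bounded martingale, passes to the limit by dominated convergence, and only gets a local martingale after letting $m\to\infty$; you instead keep the unstopped $M^{n,f}$ and justify the exchange of limit and expectation by uniform integrability coming from \eqref{eq:supEVp} and \eqref{NormEstimate}. Your route is legitimate and arguably cleaner, since it sidesteps the (slightly delicate) convergence of the stopped times/processes $\tau_m(\hat V^n)\to\tau_m(\hat V)$ that the paper's argument implicitly uses. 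One small repair: you test only against $g(\hat Z_{s_1},\dots,\hat Z_{s_m})$, which yields the martingale property in the filtration of $\hat Z$ alone; the paper tests against $g_i(\hat V^n_{t_i},\hat Z^n_{t_i})$ to get it in $\sigma(\hat V_s,\hat Z_s:s\le t)$, which is the filtration relevant to Definition \ref{def:MartPb}. This is recoverable in your setting (once \eqref{eq:martPb_VZ} is proved, $\hat V_t$ is a measurable functional of $\hat Z|_{[0,t]}$, so the augmented filtrations agree), but it is cheaper to simply include the $\hat V$-marginals in the test functions. For the Volterra identity, the paper disposes of the difficulty you correctly single out by citing the Kurtz--Protter theorem on weak convergence of stochastic integrals (\cite[Theorem 2.2]{kurtz1991weak}), applied for each fixed $t$; you propose instead a hands-on argument by splitting at $t-\varepsilon$, integrating by parts on $[0,t-\varepsilon]$ against the (deterministic, finite-variation there) measure $d_s[K(t-s)]$ so that only uniform convergence of $\hat Z^n$ is needed, and killing the tail $\int_{t-\varepsilon}^t$ via \eqref{eq:A2} together with the uniform moment bounds (for the limit tail one also uses the semimartingale decomposition of $\hat Z$, which your martingale-problem step already provides, so the order of the two steps matters). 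That sketch is workable and more self-contained, but it is only a sketch: the boundary terms at $t-\varepsilon$, the interchange of the limits $n\to\infty$ and $\varepsilon\to 0$, and the tail estimate for $\hat Z$ would need to be written out, whereas the paper's citation settles this in one line. In short: same skeleton, a simpler uniform-integrability treatment of the martingale part, and a more elementary but more laborious replacement for the Kurtz--Protter step.
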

	\begin{proof}
	Let $(\Omega^n,\mathcal{F}^n, \F^n = (\mathcal{F}_t^n)_{t \in [0,T]},\mathbb{P}^n)$ be a filtered probability space where $(\hat{V}^n,\hat{Z}^n)$ is defined and $(\hat{V},\hat{Z})$ be a weak limit of $(\hat{V}^n, \hat{Z}^n)$. Fix $f\in C^2_c(\R^2)$ and $m\in \N$. For any $\xbf\in C([0,T],\R^2)$, define
	$$\tau_m(\xbf)=\inf\{ t \geq 0: |\xbf(t)| \geq m \} \wedge T. $$
	Then $\tau_m (\hat{V}^n)$ is an $\F^n$-stopping time, and we have
	$$
		\int_0^{t\wedge\tau_m (\hat{V}^n)} \big| \Lc_s^n f(\hat{V}^n,\hat{Z}^n) \big|~ds 
		~\leq~
		C_f\int_0^{t\wedge\tau_m (\hat{V}^n)} \big(1+|\hat{V}_{\ens}^n| \big)~ds
		~\leq~
		(1+m)\, C_f,~t\in [0,T] . 
	$$
	The Itô formula implies that the following process
	$$
		M_t^n
		=
		f \big( \hat{Z}^n_{t\wedge\tau_m (\hat{V}^n)} \big)
		-
		\int_0^{t\wedge\tau_m(\hat{V}^n)} \Lc_s^n f(\hat{V}^n,\hat{Z}^n)~ds
	$$
	is a uniformly bounded martingale.
	Thus, for any $0\leq t_1 < t_2 ...<t_k \leq s <t \leq T$ and $g_i \in C_b(\mathbb{R}^2 \times\mathbb{R}^2),~i=1,2,...,k$, we have
	\begin{align}\label{expct}
		\EE \Big[ \big( M^n_t-M^n_s \big)\prod_{i=1}^k g_i\big(\hat{V}^n_{t_i},\hat{Z}^n_{t_i} \big) \Big]
		~=~
		0.
	\end{align}
	Then, by Skorokhod's representation Theorem, we may assume that all $(\hat{V}^n,\hat{Z}^n) $ and $(\hat{V},\hat{Z})$ are defined on a common probability space $(\Omega,\mathcal{F},\mathbb{P})$, that $(\hat{V}^n,\hat{Z}^n)\longrightarrow (\hat{V},\hat{Z})$ in $C([0,T],\R^2) \times C([0,T],\R^2)$ almost surely, and that each pair $(\hat{V}^n,\hat{Z}^n)$ has the same law under $\mathbb{P}$ as it did under $\mathbb{P}^n$.

	\vspace{0.5em}

	By Lemma \ref{conv}, one has
	$$
		\int_0^{t\wedge\tau_m(\hat{V}^n)} \Lc^n_s f(\hat{V}^n,\hat{Z}^n)~ds  
		~\longrightarrow~
		\int_0^{t\wedge \tau_m(\hat{V})} \Lc_s f(\hat{V},\hat{Z})~ds,
	$$
	almost surely for any $t \in [0,T]$. 
	Define
	$$
		M_t 
		~:=~
		f(Z_{t\wedge \tau_m(\hat{V})})-\int_0^{t\wedge \tau_m(\hat{V})} \Lc_s f(\hat{V},\hat{Z})~ds,
		~~t\in [0,T].
	$$
	We conclude that $M_t^n \longrightarrow M_t$ almost surely for $t\in [0,T]$. 
	Since $M_t^n $ is bounded uniformly in $n$, 
	we may use the dominated convergence theorem to pass to the limit $n \longrightarrow \infty$ in \eqref{expct} to get
	\begin{align}
		\EE \Big[(M_t-M_s)\prod_{i=1}^k g_i\big(\hat{V}_{t_i}, \hat{Z}_{t_i} \big)\Big]=0.
	\end{align}
	Thus $M$ is a martingale with respect to the filtration given by
	$$
		\mathcal{F}_t=\sigma (\hat{V}_s,\hat{Z}_s: s\leq t),~t\in[0,T].
	$$
	Since $\tau_m (\hat{V})$ is a stopping time for this filtration and the constant $m$ in the definition of $\tau_m (\hat{V})$ was arbitrary, the process $M^f$
	$$M^f_t=f(\hat{Z}_t)-\int_0^t \Lc_s f(\hat{V},\hat{Z})~ds,~ t\in [0,T]$$
 is a local martingale. 
 
 	\vspace{0.5em}
	
	Next, recall that by definition of $\hat{V}^n_t$, we have
	\begin{equation*}
	\begin{split}
		\hat{V}_t^n = \hat{V}_0& + \int_0^t \hat{K}(t-\ens)~d\hat{Z}^n_s.
	\end{split}
	\end{equation*}
	Then it is enough to apply \cite[Theorem 2.2]{kurtz1991weak} (see also \cite{jakubowski1989convergence})
	to conclude that, for each $t \in [0,T]$,
	$$
			\hat{V}_t= \hat{V}_0 + \int_0^t \hat{K}(t-s) \, d \hat{Z}_s, ~\mbox{a.s.}
	$$
	Notice that the process $V$ is continuous in $t$, one then proves \eqref{eq:martPb_VZ}.
	\end{proof}

	\begin{remark} \label{rem:MartingalePb}	
	
	$\mathrm{(i)}$ 
	Our definition of martingale problem in Definition \ref{def:MartPb} differs slightly from \cite[Def. 3.1]{abi2019weak}.
	More precisely, our condition \eqref{eq:martPb_VZ} is on the process $V$, but Condition (3.3) of \cite{abi2019weak} is on the process $\int_0^t V_s \, ds$.
	Nevertheless, as soon as $V$ has continuous paths, the two conditions are in fact equivalent, so that the two definitions are equivalent.
	The formulation based on $\int_0^t V_s \, ds$ has the advantage to obtain the tightness more easily than that of $V$,
	but still we are able to obtain the tightness of $V$ in our context.

	\vspace{0.5em}	
	
	As in the classical setting, it is easy to check that $(\hat{V},\hat{Z})$ is a solution of the local martingale problem (Definition \ref{def:MartPb}) 
	for $(\hat{V}_0,\hat{K}, \Lc)$ if and only if $\hat{V} = (Y, V)$ satisfies
	\begin{equation} \label{eq:RoughHeston_variation}
	\begin{split}
		Y_t &= Y_0+\int_0^t -\frac{1}{2} (V_s)_+~ds+\int_0^t \sqrt{(V_s)_+}~d(\rho W_s+\sqrt{1-\rho^2}W_s^{\bot}),\\
		V_t&=V_0+\int_0^t K(t-s)(\theta-\lambda (V_s)_+)~ds+\int_0^t K(t-s)\nu \sqrt{(V_s)_+}~dW_s.
	\end{split}
	\end{equation}
	The proof is almost the same as Lemma 3.3 of \cite{abi2019weak}, with some simplifications due to the absence of jumps here. It is therefore omitted.
	
	\vspace{0.5em}	
		
	Consequently, to prove the convergence result in Theorem \ref{thm:CvgRoughHeston}, it is enough to prove that Equation \eqref{eq:RoughHeston_variation} has the same unique solution as that of \eqref{eq:RoughHeston}.
	This will be proven in  Proposition \ref{Prop:TwoModifiedModels}.(i).
	\end{remark}

\subsection{Tightness of solutions to Scheme \eqref{eq:schemeIntegRough}}

	Let us first rewrite the discrete scheme \eqref{eq:schemeIntegRough} as an integrated-rough Volterra equation in a continuous-time form:
	$S^n = \exp(Y^n)$, and
	\begin{equation}\label{eq:schemeIRV}
	\begin{cases}
		Y_t^n &\!\!=~ Y_0 +\rho M^n_{t}+\sqrt{1-\rho^2}M^{n,\bot}_t-\frac{1}{2}\overline{X}^n_{t}, \\
		X_t^n &\!\!=~V_0t+\int_0^t K(t-\ens)\Big(\theta \ens-\lambda \overline{X}^n_{\ens}+\nu M^n_{\ens}\Big)~ds, \\
		\overline{X}^n_{t_k}&\!\!=~\max\limits_{0\leq j\leq k}X^n_{t_j}, \\
		\overline{X}^n_t&\!\!=~\overline{X}^n_{t_{k-1}}+(\overline{X}^n_{t_{k}}-\overline{X}^n_{t_{k-1}})\frac{t-t_{k-1}}{t_{k}-t_{k-1}}~,~\mbox{for}~t\in [t_{k-1},t_{k}),\\
		M^n_{t}&\!\!=~ M^n_{t_{k-1}}+\sqrt{\overline{X}^n_{t_{k}}-\overline{X}^n_{t_{k-1}}}\frac{B_t-B_{t_{k-1}}}{\sqrt{t_{k}-t_{k-1}}}~,~\mbox{for}~t\in [t_{k-1},t_{k}), \\
		M^{n,\bot}_{t}&\!\!=~ M^{n,\bot}_{t_{k-1}}+\sqrt{\overline{X}^n_{t_{k}}-\overline{X}^n_{t_{k-1}}}\frac{B^\bot_t-B^\bot_{t_{k-1}}}{\sqrt{t_{k}-t_{k-1}}} ~,~\mbox{for}~t\in [t_{k-1},t_{k}),
 	\end{cases}
	\end{equation}
	where $M^n_{t_0} = M^{n,\bot}_{t_0}=0$, $B$ and $B^\bot$ are independent standard Brownian motions. 
	In view of the above definition, $M^n$ and $M^{n,\bot}$ are continuous martingales with quadratic variation 
	\begin{equation*}
		\langle M^n \rangle_t=\langle M^{n,\bot}\rangle_t=\overline{X}^n_{t_{k-1}}+(\overline{X}^n_{t_{k}}-\overline{X}^n_{t_{k-1}})\frac{t-t_{k-1}}{t_{k}-t_{k-1}} ~,~\mbox{for}~ t\in [t_{k-1},t_{k}),
	\end{equation*}
	which means that for any $t\in [0,T]$,
	\begin{equation}\label{eq:quadVarMn}
		\langle M^n \rangle_t=\langle M^{n,\bot}\rangle_t=\overline{X}^n_{t}.
	\end{equation}

	Recall the following BDG inequality: for some constant $C> 0$, one has 
	\begin{equation} \label{eq:TimeChangedBM}
		\EE \big[ \big| M_t^n-M_s^n \big|^p \big]
		+
		\EE \big[ \big|M^{n,\bot}_t-M^{n,\bot}_s \big|^p \big]
		\leq 
		C\, \EE \big[ \big|\overline{X}^n_t-\overline{X}^n_s \big|^{\frac{p}{2}} \big],
		~~\mbox{for all}~0\leq s\leq t\leq T.
	\end{equation}

	\begin{lemma}\label{convOFz}
		Assume $(\zbf^n)_{n \ge 1} \subseteq C([0,T],\R^2)$ and $\zbf^n\longrightarrow z$ in $C([0,T],\R^2)$. Then the following uniform convergence holds:
		$$
			\lim_{n \to \infty} \sup_{t\in[0,T]} \Big| \int_0^t \hat{K}(t-\ens) \zbf^n_s~ds- \int_0^t \hat{K}(t-s) \zbf_s~ds \Big| = 0 .
		$$
	\end{lemma}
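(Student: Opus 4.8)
The plan is to split the difference into two pieces by adding and subtracting the intermediate term $\int_0^t \hat K(t-\eta_n(s))\,\zbf_s\,ds$, so that
\[
	\Big| \int_0^t \hat K(t-\eta_n(s))\zbf^n_s\,ds - \int_0^t \hat K(t-s)\zbf_s\,ds \Big|
	~\le~
	\int_0^t \hat K(t-\eta_n(s))\,|\zbf^n_s - \zbf_s|\,ds
	~+~
	\Big| \int_0^t \big( \hat K(t-\eta_n(s)) - \hat K(t-s) \big)\zbf_s\,ds \Big|.
\]
For the first term, I would use that $\zbf^n \to \zbf$ uniformly, so $\sup_{s\in[0,T]}|\zbf^n_s - \zbf_s| \to 0$, and bound $\int_0^t \hat K(t-\eta_n(s))\,ds$ uniformly in $t$ and $n$: since $K$ is nonnegative and non-increasing, $K(t-\eta_n(s)) \le K(t-s)$ whenever $\eta_n(s)\le s$ (which holds by definition), hence $\int_0^t K(t-\eta_n(s))\,ds \le \int_0^t K(t-s)\,ds = \int_0^t K(u)\,du \le \|K\|_{L^1([0,T])}<\infty$ because $K\in L^2([0,T])\subseteq L^1([0,T])$. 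The identity block of $\hat K$ contributes $\int_0^t 1\,ds \le T$, so this term is bounded by $\big(T + \|K\|_{L^1}\big)\sup_s|\zbf^n_s-\zbf_s| \to 0$, uniformly in $t$.

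For the second term, the function $\zbf$ is fixed and continuous, hence bounded by some $M$ on $[0,T]$, so it suffices to show $\sup_{t\in[0,T]} \int_0^t |K(t-\eta_n(s)) - K(t-s)|\,ds \to 0$. I would control this using continuity (hence uniform continuity) of $K$ on $[0,T]$: given $\varepsilon>0$, there is $\delta>0$ with $|K(a)-K(b)|\le\varepsilon$ whenever $|a-b|\le\delta$ and $a,b \ge \delta$, and for $n$ large enough $\delta_n := |\pi_n| \le \delta$, so that on the region where $t-s \ge \delta$ the integrand is at most $\varepsilon$. Near the singularity, i.e. on $\{s: t-s < \delta\}$, which has length at most $\delta$, I bound $\int |K(t-\eta_n(s))-K(t-s)|\,ds \le \int_0^{\delta + \delta_n} K(u)\,du + \int_0^\delta K(u)\,du$, and since $K\in L^1$ this tail is small uniformly in $t$ once $\delta$ is small. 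Combining, $\sup_t \int_0^t |K(t-\eta_n(s))-K(t-s)|\,ds \le \varepsilon T + 2\int_0^{2\delta}K(u)\,du$, which can be made arbitrarily small; the identity block again contributes nothing since $1 - 1 = 0$.

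The main obstacle is the behaviour of $K$ near its singularity at $0$: $K$ is only assumed continuous and non-increasing on $[0,T]$, so although it is uniformly continuous on all of $[0,T]$ (being continuous on a compact interval), one must still be careful that the $L^1$-integrability near $0$ is what tames the contribution of the short interval $\{t - s < \delta\}$; the non-increasing property is what lets me dominate $K(t-\eta_n(s))$ by $K(t-s)$ and keep all bounds uniform in $t$ and $n$. Everything else is routine, and since all estimates are uniform in $t$, the convergence is uniform in $t\in[0,T]$ as claimed.
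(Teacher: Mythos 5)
Your decomposition (inserting the intermediate term $\int_0^t \hat K(t-\eta_n(s))\zbf_s\,ds$) is essentially the paper's proof, which inserts $\int_0^t \hat K(t-s)\zbf^n_s\,ds$ instead; the treatment of the first term (uniform convergence of $\zbf^n$ plus a uniform $L^1$ bound on the kernel, using that $K$ is non-increasing and $K\in L^2\subset L^1$) is correct. However, your key estimate for the second term rests on a claim that is false for exactly the kernels the paper targets. The continuity in Assumption \ref{assum:main} must be read on $(0,T]$: the model kernel $K(t)=Ct^{H-\frac12}$, $H\in(0,\frac12)$, satisfies the assumption and blows up at $0$, so $K$ is \emph{not} uniformly continuous on $[0,T]$, contrary to your parenthetical remark. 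More to the point, the one-parameter statement ``there is $\delta>0$ such that $|K(a)-K(b)|\le\varepsilon$ whenever $|a-b|\le\delta$ and $a,b\ge\delta$'' fails: taking $a=\delta$, $b=2\delta$ gives $|K(a)-K(b)|=C\delta^{H-\frac12}(1-2^{H-\frac12})\to\infty$ as $\delta\to0$. Hence the step ``on $\{t-s\ge\delta\}$ the integrand is at most $\varepsilon$ once $\delta_n\le\delta$'' does not follow as written, because the modulus of continuity degenerates as the cutoff approaches the singularity.

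The gap is local and easily repaired, since your near/far splitting is the right idea: first fix a cutoff $c>0$ with $\int_0^{2c}K(u)\,du\le\varepsilon$ (possible since $K\in L^1$), then use uniform continuity of $K$ on the compact interval $[c,T]$ to obtain a modulus $\delta'>0$, and finally take $n$ large enough that $\delta_n\le\delta'$; on $\{t-s\ge c\}$ both arguments $t-s$ and $t-\eta_n(s)$ lie in $[c,T]$ and differ by at most $\delta_n$. Even simpler, continuity can be bypassed altogether using monotonicity, which is closer to what the paper leaves implicit: since $0\le s-\eta_n(s)\le\delta_n$ and $K$ is non-increasing, $0\le K(t-s)-K(t-\eta_n(s))$ and a change of variables gives $\int_0^t\big(K(t-s)-K(t-\eta_n(s))\big)\,ds\le\int_0^{\delta_n}K(u)\,du$, which tends to $0$ uniformly in $t$ by integrability of $K$. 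With either repair your argument is complete and coincides in substance with the paper's (whose own proof, incidentally, asserts the convergence of the kernel-difference term without detailing it, so spelling this step out correctly is worthwhile).
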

	\begin{proof}
	One has 
	\begin{equation*}
	\begin{split}
		&\Big|\int_0^t \hat{K}(t-s) \zbf_s~ds-\int_0^t \hat{K}(t-\ens) \zbf^n_s~ds\Big| \\
		\leq&
		\Big|\int_0^t \hat{K}(t-s)( \zbf_s- \zbf^n_s)~ds\Big|+\Big|\int_0^t \Big(\hat{K}(t-s)-\hat{K}(t-\ens)\Big) \zbf^n_s ~ds\Big|. \\
	\end{split}
	\end{equation*}
	The term $|\int_0^t \hat{K}(t-s)( \zbf_s- \zbf^n_s)~ds|$ converges to 0 uniformly since $\zbf^n \longrightarrow \zbf$ uniformly 
	and ${|\int_0^t \hat{K}(t-s)~ds|< \infty}$ by the fact that $K \in L^2([0,T])$.
 
	As for the second term, it follows from the boundedness of $(\zbf^n)$ that
	\begin{equation*}
		\Big|\int_0^t \Big(\hat{K}(t-s)-\hat{K}(t-\ens)\Big) \zbf^n_s ~ds\Big|
		 \leq 
		 \sup_{n\in\N} \|\zbf^n\|_\infty \, \int_0^t \Big|\hat{K}(t-s)-\hat{K}(t-\ens)\Big|~ds
		~\longrightarrow~ 0.
	\end{equation*}
	\end{proof}

	Since the processes $\overline{X}^n$ and $\{ \sup_{s\in [0,t]} X^n_{s},\, t\in [0,T]\}$ may differ, we now introduce the auxiliary process
	\begin{equation*}
		\widetilde{X}^n_{t} = \sup_{s\in [0,t]} X^n_{s} .
	\end{equation*}
	We will eventually show that $\overline{X}^n$ and $\widetilde{X}^n$ converge to the same limit, which is $X$.

	\begin{proposition}\label{Prop:Xn}
		For all $p \ge 2$, there exists a constant $C_p > 0$ such that for all $0\leq s\leq t \leq T$ and $n\geq 1$,
		\begin{equation*}
		\begin{split}
			(i)~&\EE \big[\big |X^n_T \big|^p \big] + \EE \big[ \big|\overline{X}_T^n \big|^p \big] 
			~\leq~
			 C_p,\\
			(ii)~& \EE \big[ \big|X^n_t-X^n_s \big|^p \big] + \EE\big[\big|\overline{X}^n_{t}-\overline{X}^n_{s}\big|^p\big]+ \EE \big[ \big|\widetilde{X}^{n}_t -\widetilde{X}^{n}_s\big|^p\big]
            		~\leq~
			C_p\, (t-s)^{pH},\\
			(iii)~& \EE \big[ \big|M^n_{t}-M^n_{s} \big|^{2p} \big]  + \EE \big[ \big|M^{n,\bot}_{t}-M^{n,\bot}_{s} \big|^{2p} \big] 
			~\leq~
			C_p\, (t-s)^{pH}.\\
		\end{split}
		\end{equation*}
	\end{proposition}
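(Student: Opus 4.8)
The plan is to establish the three estimates in a chain, following the same pattern as in the proof of Proposition \ref{prop:schemeRoughHest}, but now the added difficulty is that $\overline{X}^n$ is defined through a running maximum and $X^n$ is built from it, so the a priori bound has to be closed on $\overline{X}^n$ (or on $\widetilde X^n$) rather than on $X^n$ directly. First I would write, for $0 \le s \le t \le T$, the increment
\begin{align*}
	X^n_t - X^n_s
	&= V_0 (t-s) + \int_s^t K(t-\ens)\big(\theta \ens - \lambda \overline X^n_{\ens} + \nu M^n_{\ens}\big)\, ds \\
	&\quad + \int_0^s \big( K(t-\ens) - K(s-\ens)\big)\big(\theta \ens - \lambda \overline X^n_{\ens} + \nu M^n_{\ens}\big)\, ds,
\end{align*}
and then bound the $L^p$-norm of each term using Minkowski's integral inequality, together with the crude bounds $|\theta \ens| \le \theta T$, $|\lambda \overline X^n_{\ens}| \le \lambda \overline X^n_{\ens}$, and — crucially — the BDG inequality \eqref{eq:TimeChangedBM}, which gives $\EE[|M^n_{\ens}|^p]^{1/p} \le C\, \EE[|\overline X^n_{\ens}|^{p/2}]^{1/p} \le C\big(1 + \EE[|\overline X^n_{\ens}|^p]^{1/p}\big)$. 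This turns the right-hand side into a Gr\"onwall-type expression in $g_n(t) := \EE[|\overline X^n_t|^p]$ (after taking a supremum over $s \le t$ on the left, which passes through because the maximum of the increments controls $\overline X^n_t - \overline X^n_s$). The precise form matches the hypothesis \eqref{eq:Gronwall_ineq} of Lemma \ref{lem:Gronwall_var} with exponent $q = p$, once one uses \eqref{eq:A2}-\eqref{eq:A3} to absorb the kernel integrals; applying that lemma yields part $(i)$, namely $\sup_{n}\sup_{t\le T}\EE[|\overline X^n_t|^p] < \infty$, and since $|X^n_t| \le \overline X^n_t$ trivially (as $X^n_0 = V_0\cdot 0 = 0$... actually $X^n$ need not be nonnegative, so more carefully $|X^n_T| \le \widetilde X^n_T + |\inf_s X^n_s|$, both of which are controlled by the same increment estimate), one also gets the bound on $\EE[|X^n_T|^p]$.

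For part $(ii)$, I would feed the uniform bound from $(i)$ back into the increment estimate just derived: once $\EE[|\overline X^n_{\ens}|^p]^{1/p}$ and $\EE[|M^n_{\ens}|^p]^{1/p}$ are bounded by a constant independent of $n$ and of the time point, the kernel integrals $\int_s^t K(t-\ens)^2 ds$ and $\int_0^s |K(t-\ens)-K(s-\ens)|^2 ds$ are $\le C(t-s)^{2H}$ by \eqref{eq:A2}-\eqref{eq:A3}, and the drift term contributes $C(t-s)^p \le C(t-s)^{pH}$ (using $pH \le p$ and $t-s \le T$, adjusting the constant). This gives $\EE[|X^n_t - X^n_s|^p] \le C_p(t-s)^{pH}$. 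The estimate for $\overline X^n$ and for $\widetilde X^n$ follows because both are "running sup" type functionals: for $s \le t$, $\widetilde X^n_t - \widetilde X^n_s \le \sup_{s \le u \le t}(X^n_u - X^n_s)_+$, and a chaining/Garsia-type argument (or simply: the running maximum increment is bounded by the modulus of continuity of $X^n$ on $[s,t]$, whose $L^p$-norm is controlled via the Kolmogorov-type increment bound just proved, e.g.\ through Theorem 2.4.11 of \cite{karatzas1998brownian} applied pathwise) upgrades the increment bound on $X^n$ to one on $\widetilde X^n$; the linear-interpolation definition of $\overline X^n$ on the grid $\pi_n$ shows $\overline X^n$ differs from $\widetilde X^n$ only in how intermediate points are filled in, so it inherits the same bound. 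One must be a little careful here that the constant does not blow up as $\delta_n \to 0$, but since the increment bound on $X^n$ is uniform in $n$, the maximal inequality constant is too.

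Finally, part $(iii)$ is immediate from part $(ii)$ and the BDG inequality \eqref{eq:TimeChangedBM}: $\EE[|M^n_t - M^n_s|^{2p}] \le C\,\EE[|\overline X^n_t - \overline X^n_s|^p] \le C_p (t-s)^{pH}$, and identically for $M^{n,\bot}$.

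I expect the main obstacle to be part $(i)$: setting up the self-referential inequality so that it genuinely fits the hypothesis of Lemma \ref{lem:Gronwall_var}, which requires closing the estimate on $\overline X^n$ (a running maximum that is not itself directly expressed by a Volterra equation) rather than on $X^n$. The key point making this work is that $\overline X^n_t - \overline X^n_s \le \sup_{s \le u \le t}|X^n_u - X^n_s|$ together with the structural bound $\lambda \overline X^n_{\ens} \ge 0$ in the drift, so that taking running suprema on both sides of the $X^n$-increment identity and using $\EE[\sup_{s\le u\le t}(\cdots)] $ — handled via BDG applied to the martingale part and Minkowski applied to the drift part, exactly as in Lemma \ref{lem:Gronwall_var}'s proof — keeps everything in terms of $\bar f_n(t) = \sup_{s\le t} g_n(s)$. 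Once that bookkeeping is correctly arranged, parts $(ii)$ and $(iii)$ are routine given the earlier machinery.
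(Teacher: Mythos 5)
Your overall skeleton (increment decomposition of $X^n$, kernel conditions \eqref{eq:A2}--\eqref{eq:A3}, the BDG bound \eqref{eq:TimeChangedBM} to trade moments of $M^n_{\enu}$ for moments of $\overline{X}^n_{\enu}$, a Gr\"onwall closure for (i), and (iii) as an immediate consequence of (ii)) is the same as the paper's. The genuine gap is in how you pass from the two-point bound $\EE[|X^n_t-X^n_s|^p]\le C(t-s)^{pH}$ to the increments of the running suprema $\overline{X}^n,\widetilde{X}^n$. A chaining/Garsia or Kolmogorov-continuity upgrade from two-point moments to $\EE\big[\sup_{r\in[s,t]}|X^n_r-X^n_s|^p\big]$ necessarily loses H\"older exponent: it yields $(t-s)^{\gamma p}$ only for $\gamma<H-\tfrac1p$, not the claimed $(t-s)^{pH}$, so as written your argument does not prove (ii) (and in part (i), where you need the same supremum \emph{before} any moment bound is available, invoking such a criterion would be circular). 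The point you are missing is that no maximal inequality is needed at all: in \eqref{eq:schemeIRV} the equation for $X^n$ contains no stochastic integral (the martingale $M^n$ enters only through a Lebesgue integral), so after the pathwise Cauchy--Schwarz step using \eqref{eq:A2}--\eqref{eq:A3} the bound on $|X^n_r-X^n_s|^p$ is monotone in $r$, and the supremum over $r\in[s,t]$ passes through for free; combined with $\overline{X}^n_t-\overline{X}^n_s\le \sup_{r\in[s,t]}(X^n_r-X^n_s)$ and the same inequality for $\widetilde X^n$, this gives exactly $C_p(t-s)^{pH}$, which is how the paper argues. Relatedly, your phrase ``BDG applied to the martingale part ... exactly as in Lemma \ref{lem:Gronwall_var}'s proof'' is off: there is no martingale part in the $X^n$ dynamics, BDG is used only through \eqref{eq:TimeChangedBM} to bound $\EE[|M^n_{\enu}|^p]$ by $\EE[|\overline{X}^n_{\enu}|^{p/2}]$, and Lemma \ref{lem:Gronwall_var} is a purely deterministic statement whose proof involves neither BDG nor Minkowski.

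On part (i), your route through Lemma \ref{lem:Gronwall_var} applied to $f_n(t)=\EE[|\overline{X}^n_t|^p]$ could be made to work once the supremum step is handled directly as above, but it is heavier than necessary. The paper exploits that $K$ is non-increasing and in $L^2([0,T])$: Cauchy--Schwarz gives $\big|\int_0^t K(t-\ens)(\theta\ens-\lambda\overline{X}^n_{\ens}+\nu M^n_{\ens})\,ds\big|^p\le C\int_0^t(\theta\ens+\lambda|\overline{X}^n_{\ens}|+\nu|M^n_{\ens}|)^p\,ds$ with no kernel left in the integrand, and then \eqref{eq:TimeChangedBM} together with $\sqrt{x}\le 1+x$ closes a \emph{classical} Gr\"onwall inequality for $\EE[\sup_{s\le t}|X^n_s|^p]$ (using $\overline{X}^n_{\ens}\le\sup_{u\le s}|X^n_u|$ and $\overline{X}^n_T=\sup_{t\le T}X^n_t$), with no partition of $[0,T]$ and no Volterra-type lemma. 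The kernel conditions \eqref{eq:A2}--\eqref{eq:A3} are then only needed for the increment estimate in (ii); part (iii) follows from (ii) and \eqref{eq:TimeChangedBM} exactly as you say.
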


	\begin{proof}

	$(i)$ Recall that $K$ is non-increasing, so that 
	$$
		\int_{0}^t |K(t-\ens)|^2 \, ds  ~\le~ \int_0^T |K(s)|^2 ds ~<~ \infty,
		~~\mbox{for all}~n \ge 1
		~\mbox{and}~t \in [0,T].
	$$
	Then by the equation on $X^n$ in \eqref{eq:schemeIRV}, together with Cauchy-Schwarz and H\"older's inequality, 
	one has for some positive constants $C_0$ and $C_1$ (independent of $n \ge 1$)  that
	\begin{align*}
		\big| {X}^n_t \big|^p&\leq C_0 \, |V_0|^p +C_0\, \Big|\int_0^{t} K(t-\ens)\Big(\theta \ens-\lambda \overline{X}^n_{\ens}+\nu M^n_{\ens} \Big)~ds\Big|^p\\
		&\leq C_0\, |V_0|^p+C_1\int_0^t \big( \theta \ens +\lambda |\overline{X}^n_{\ens}|+\nu |M^n_{\ens}| \big)^p~ds.
	\end{align*}
	Using the inequality $\sqrt{x} \leq 1+x$, one obtains that for some constant $C_2 > 0$ (independent of $n$),
	\begin{equation*}
		\EE \Big[\sup_{s \in [0,t]} \big|X_s^n \big|^p \Big]
                ~\leq~
                C_2  + C_2 \int_0^t  \EE \big[ \big|\overline{X}^n_{\ens} \big|^p \big] ~ds.
	\end{equation*}
	Since $\sup_{t\in [0,T]} X^n_{t} = \overline{X}^n_{T}$ and $\overline{X}^n_{\ens} \leq \overline{X}^n_{s}$, 
	it follows by Gr\"onwall's lemma that, for some constant $C_p$ independent of $n \ge 1$,
	$$
		\sup_{0 \le t \le T} \EE \big[ \big| X^n_t \big|^p \big] 
		~\le~
		\EE \big[ \big|\overline{X}_T^n \big|^p \big] 
		~\leq~
		C_p.
	$$


 	$(ii)$ First, we observe that $\overline{X}^n_{t}-\overline{X}^n_{s} \leq \sup\limits_{r\in [s,t]} (X_{r}^n-X^n_{s})$ 
	and $\widetilde{X}^n_{t}-\widetilde{X}^n_{s} \leq \sup\limits_{r\in [s,t]} (X_{r}^n-X^n_{s})$. 
	Hence it suffices to prove that
	\begin{equation}\label{eq:Esup}
		\EE [   \sup_{r\in [s,t]} (X_{r}^n-X^n_{s})^p] \leq C\, (1+|V_0|^p)\, (t-s)^{pH}.
	\end{equation}
	From the Cauchy-Schwarz inequality and \eqref{eq:A2}-\eqref{eq:A3} we get that 
	\begin{align*}
		|X_r^n-X_s^n|^p&= \Big| V_0\, (r-s) + \int_s^r K(r-\enu) \big(\theta \enu -\lambda \overline{X}^n_{\enu}+\nu M^n_{\enu}\big)~du \\
		&\quad\quad +\int_0^s \big(K(r-\enu)-K(s-\enu)\big) \big(\theta \enu -\lambda \overline{X}^n_{\enu}+\nu M^n_{\enu}\big)~du \Big|^p\\
		&\leq C|V_0|^p(r-s)^p+C(r-s)^{pH}\Big|\int_s^r \Big(\theta \enu -\lambda \overline{X}^n_{\enu}+\nu M^n_{\enu}\Big)^2~du\Big|^{\frac{p}{2}}\\
		&~~~+C(r-s)^{pH}\Big|\int_0^s\Big(\theta \enu -\lambda \overline{X}^n_{\enu}+\nu M^n_{\enu}\Big)^2~du\Big|^{\frac{p}{2}}.
	\end{align*}
	Thus taking the supremum on $[s,t]$ and the expectation, and applying again the H\"older inequality ($\frac{p}{2}>1$), one gets
	\begin{align*}
		\EE\Big[ \sup_{r\in [s,t]} |X_r^n-X_s^n|^p \Big] 
		&\leq 
		C |V_0|^p \, (t-s)^p
		+
		C(t-s)^{pH}\int_s^t \EE \Big[ \big| \theta \enu -\lambda \overline{X}^n_{\enu}+\nu M^n_{\enu}\big|^p \Big]~du\\
		&~~~+
		C(t-s)^{pH}\int_0^s \EE \Big[ \big|\theta \enu -\lambda \overline{X}^n_{\enu}+\nu M^n_{\enu}\big|^p \Big]~du.
	\end{align*}
	Hence using \eqref{eq:TimeChangedBM} and point $(i)$, we deduce from the previous inequality that \eqref{eq:Esup} holds.

	\vspace{0.5em}

 	$(iii)$ From \eqref{eq:TimeChangedBM}, we have 
	$$
		\EE \big[ \big|M^n_{t}-M^n_{s} \big|^{2p} \big] \leq C~ \EE\big[ \big|\overline{X}^n_{t}-\overline{X}^n_{s} \big|^p\big].
	$$
	Then by $(ii)$  one obtains the result. 
	Similarly, the same holds for $M^{n,\bot}$.
	\end{proof}

	\begin{corollary}\label{WeakConvergX}
		By passing to a subsequence, one has $(X^n,\overline{X}^n,M^n,M^{n,\bot})\Longrightarrow (X,\overline{X},M,M^{\bot})$ for some stochastic processes $X,~M,~M^{\bot}$ in $C([0,T],\R)$, where $\overline{X}_t=\sup_{r\in [0,t]}X_r$ and $M,~M^\bot$ are two martingales in $C([0,T],\R)$ with quadratic variation $\langle M\rangle=\langle M^{\bot}\rangle=\overline{X}$ and covariation $\langle M,M^{\bot}\rangle=0$.
	\end{corollary}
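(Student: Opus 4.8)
The plan is to turn the uniform moment estimates of Proposition~\ref{Prop:Xn} into weak compactness, extract a subsequential limit, and then identify its four components one at a time.

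\emph{Tightness and extraction.} Fix $p\ge 2$ with $pH>1$. By Proposition~\ref{Prop:Xn}(i)--(iii) there is a constant $C$, uniform in $n\ge1$, such that $\EE[|X^n_t-X^n_s|^p]$, $\EE[|\overline{X}^n_t-\overline{X}^n_s|^p]$, $\EE[|M^n_t-M^n_s|^{2p}]$ and $\EE[|M^{n,\bot}_t-M^{n,\bot}_s|^{2p}]$ are all bounded by $C(t-s)^{pH}$, the exponent $pH$ exceeding $1$, and the initial values are deterministic. By the Kolmogorov--Chentsov criterion (Theorem~2.4.11 of~\cite{karatzas1998brownian}, exactly as in the proof of Corollary~\ref{tightXnZn}) each of $\{X^n\}_n$, $\{\overline{X}^n\}_n$, $\{M^n\}_n$, $\{M^{n,\bot}\}_n$ is tight in $C([0,T],\R)$, hence the quadruple $(X^n,\overline{X}^n,M^n,M^{n,\bot})$ is tight in $C([0,T],\R)^4$. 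By Prokhorov's theorem we pass to a subsequence (not relabelled) along which $(X^n,\overline{X}^n,M^n,M^{n,\bot})\Longrightarrow(X,\overline{X},M,M^{\bot})$ with continuous limits, and by Skorokhod's representation theorem we may assume the convergence is almost sure on a common probability space, each prelimit keeping its law.

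\emph{Identifying $\overline{X}$ as the running supremum of $X$.} Set $\widetilde{X}^n_t:=\sup_{r\in[0,t]}X^n_r$. Since $X^n$ is the piecewise-linear interpolant of its grid values, on each cell $[t^n_{k-1},t^n_k)$ both $\overline{X}^n_t$ and $\widetilde{X}^n_t$ lie in $[\overline{X}^n_{t^n_{k-1}},\overline{X}^n_{t^n_k}]$, so that
\begin{equation*}
	\|\widetilde{X}^n-\overline{X}^n\|_{\infty}~\le~\max_{1\le k\le n}\big(\overline{X}^n_{t^n_k}-\overline{X}^n_{t^n_{k-1}}\big).
\end{equation*}
By Proposition~\ref{Prop:Xn}(ii) the $L^p$-norm of the right-hand side is at most $C_p\big(\sum_k(\Delta t^n_k)^{pH}\big)^{1/p}\le C_p\,T^{1/p}\,\delta_n^{(pH-1)/p}\to 0$ as $\delta_n\to0$. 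The running-supremum map being continuous on $C([0,T],\R)$ and $X^n\to X$ uniformly a.s., one has $\widetilde{X}^n\to\sup_{[0,\cdot]}X$ uniformly a.s.; combining this with the previous display (along a further subsequence if needed) gives $\overline{X}=\sup_{[0,\cdot]}X$ a.s. In particular $\overline{X}$ is continuous, non-decreasing and $\overline{X}_0=0$.

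\emph{Identifying $M$ and $M^{\bot}$.} Let $\mathbb{G}^n$ be the augmented natural filtration of $(B,B^{\bot})$. Since $\overline{X}^n_{t^n_k}$ is $\mathcal{G}^n_{t^n_{k-1}}$-measurable, $M^n$ and $M^{n,\bot}$ are genuine stochastic integrals against $B$ and $B^{\bot}$, hence $\mathbb{G}^n$-martingales with brackets $\langle M^n\rangle=\langle M^{n,\bot}\rangle=\overline{X}^n$ (by \eqref{eq:quadVarMn}) and $\langle M^n,M^{n,\bot}\rangle=0$ (by independence of $B,B^{\bot}$); consequently $(M^n)^2-\overline{X}^n$, $(M^{n,\bot})^2-\overline{X}^n$ and $M^nM^{n,\bot}$ are $\mathbb{G}^n$-martingales and $X^n,\overline{X}^n$ are $\mathbb{G}^n$-adapted. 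Testing each of these against $\prod_{i=1}^j g_i(X^n_{t_i},\overline{X}^n_{t_i},M^n_{t_i},M^{n,\bot}_{t_i})$ for $0\le t_1<\cdots<t_j\le s<t\le T$ and bounded continuous $g_i$, the moment bounds of Proposition~\ref{Prop:Xn}(i),(iii) (with $p\ge2$) supply the uniform integrability needed to pass to the limit along our subsequence; a monotone-class argument then shows that $M$, $M^{\bot}$, $M^2-\overline{X}$, $(M^{\bot})^2-\overline{X}$ and $MM^{\bot}$ are all martingales for $\mathcal{F}_t:=\sigma(X_r,\overline{X}_r,M_r,M^{\bot}_r:\ r\le t)$. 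Since $\overline{X}$ is continuous, non-decreasing and null at $0$ and $M_0=M^{\bot}_0=0$, uniqueness of the Doob--Meyer decomposition gives $\langle M\rangle=\langle M^{\bot}\rangle=\overline{X}$ and $\langle M,M^{\bot}\rangle=0$, which is the claim.

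\emph{Main difficulty.} The only non-mechanical step is the identification of $\overline{X}$: it is obtained from $X^n$ through an $n$-dependent, grid-based functional rather than a fixed continuous one, so the continuous mapping theorem does not apply directly, and one has to control $\|\overline{X}^n-\widetilde{X}^n\|_\infty$ via the H\"older-type increment estimate of Proposition~\ref{Prop:Xn}(ii) together with $\delta_n\to0$. The passage to the limit in the martingale relations is the standard stability-of-martingale-problems argument (as in Proposition~\ref{prop:MartingaleProb}), made possible by the uniform moment estimates already established.
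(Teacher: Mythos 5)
Your argument is correct and follows essentially the same route as the paper: tightness from the moment bounds of Proposition \ref{Prop:Xn} via the Kolmogorov criterion, identification of the limit of $\overline{X}^n$ with the running supremum of $X$ by comparing it with $\widetilde{X}^n_t=\sup_{r\le t}X^n_r$, and identification of the brackets by passing to the limit in the martingale relations for $M^n$, $(M^n)^2-\overline{X}^n$, $(M^{n,\bot})^2-\overline{X}^n$, $M^nM^{n,\bot}$. The only differences are cosmetic: you control $\|\widetilde{X}^n-\overline{X}^n\|_\infty$ directly by the maximal grid-cell increment (where the paper bounds it by $\sup_{s\le t}(X^n_s-X^n_{\eta_n(s)})$ and uses finite-dimensional distributions plus Markov's inequality), and you rerun the martingale-problem/Doob--Meyer argument by hand where the paper invokes Proposition IX.1.17 of \cite{jacod2013limit}.
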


	\begin{proof}
	In view of Proposition \ref{Prop:Xn} and Theorem 2.4.11 of \cite{karatzas1998brownian}, the sequences of processes $\{X^n\}_{n\in \N}$, $\{\widetilde{X}^{n}\}_{n\in \N}$, $\{\overline{X}^{n}\}_{n\in \N}$, $\{M^n\}_{n\in \N}$ and $\{M^{n,\bot}\}_{n\in \N}$ are tight in $C([0,T],\R)$. Up to passing to a subsequence (without loss of generality, we do not rename the processes), one has $(X^n,\widetilde{X}^{n},M^n,M^{n,\bot}) \Longrightarrow (X,Y, M,M^{\bot}) $ for some processes $X,Y,M,M^{\bot}$  in $C([0,T],\R)$.
    
	\vspace{0.5em}
	
	\noindent For any $s_1,~s_2,...,s_k \in [0,T]$, the mapping        
	$$
		C([0,T],\R)  \ni \xbf 
		~\longmapsto~
		\Big(\sup\limits_{r\in [0,s_1]} \xbf_r, \sup\limits_{r\in [0,s_2]} \xbf_r,\dots,\sup\limits_{r\in [0,s_k]} \xbf_r \Big) \in \R^k
	$$
	is continuous. So by the continuous mapping theorem and the convergence in law of $X^n$ to $X$, there is    
	$$
		\big( \widetilde{X}^{n}_{s_1},\widetilde{X}^{n}_{s_2}, \dots, \widetilde{X}^{n}_{s_k} \big)
		\Longrightarrow 
		\big( \overline{X}_{s_1},\overline{X}_{s_2},\dots ,\overline{X}_{s_k} \big).
	$$
	Moreover the convergence in law of $\widetilde{X}^{n}$ to $Y$ implies that
	$$
		\big( \widetilde{X}^{n}_{s_1},\widetilde{X}^{n}_{s_2},\dots,\widetilde{X}^{n}_{s_k} \big)
		\Longrightarrow 
		\big( Y_{s_1},Y_{s_2},...,Y_{s_k} \big).
	$$
	Thus the finite dimensional distributions of $\overline{X}$ and $Y$ are identical, and since the class of finite dimensional distributional sets is a separating class in $C([0,T],\R)$, we obtain $Y=\overline{X}$. 
    
	\vspace{0.5em}

	We now aim at proving that $\overline{X}^n \Longrightarrow \overline{X}$. 
	In view of the convergence of $\widetilde{X}^n$ to $\overline{X}$, it suffices to prove that $\sup_{t\in [0,T]}|\widetilde{X}^{n}_{t}-\overline{X}^n_{t}|\Longrightarrow 0$.
 
 	For $t\in [0,T]$, one has 
	$$\widetilde{X}_t^{n}-\overline{X}^n_t  \leq \widetilde{X}^{n}_t-\overline{X}^n_{\ent} \leq \sup\limits_{s\in [0,t]}(X^n_s -X^n_{\ens}).$$
	From Proposition \ref{Prop:Xn} and the Markov inequality, one gets that
	$$
		\sup\limits_{t\in [0,T]}|\widetilde{X}^{n}_t-\overline{X}^n_t| \Longrightarrow 0.
	$$
	Hence $\overline{X}^n \Longrightarrow \overline{X}$.

	\vspace{0.5em}
    
	\noindent From $(M^n,M^{n,\bot},\overline{X}^n)\Longrightarrow (M,M^{\bot},\overline{X})$, we have 
	$$(M^n)^2-\overline{X}^n\Longrightarrow M^2-\overline{X},$$
	$$(M^{n,\bot})^2-\overline{X}^n\Longrightarrow (M^{\bot})^2-\overline{X},$$
	$$M^n M^{n,\bot} \Longrightarrow MM^{\bot}.$$
	By Proposition IX.1.17 in \cite{jacod2013limit}, the processes $M,~M^{\bot},~M^2-\overline{X},~(M^{\bot})^2-\overline{X},~MM^{\bot}$ are all continuous local martingales w.r.t. the filtration generated by $(M, M^{\bot}, \overline{X}, X)$.
	Therefore, $\langle M\rangle= \langle M^{\bot}\rangle=\overline{X},~\langle M,M^{\bot}\rangle=0$. 
	Moreover, by the uniform moment estimates on $(M^n, M^{n,\bot})$ (see Proposition \ref{Prop:Xn}), it follows that they are continuous martingales.
\end{proof}

	As an immediate consequence of Corollary \ref{WeakConvergX}, we get that up to taking a subsequence, the following convergence holds:
	$$
		(S^n, Y^n) \Longrightarrow (S, Y),
		~\mbox{with}~
		S := \exp(Y)
		~\mbox{and}~
		Y := \rho M+\sqrt{1-\rho^2}M^{\bot}-\frac{1}{2}\overline{X}.
	$$
   
   	We next prove that the limit processes $(X, \overline X, S, Y, M, M^{\bot})$ satisfy
	\begin{equation} \label{eq:IntegVarHeston_variation}
	\begin{cases}
		X_t=V_0 t+\int_0^t K(t,s)(\theta s-\lambda \overline{X}_s+\nu M_s)~ds, \\
		\overline X_t = \max_{0 \le s \le t} X_s, \\
		S_t=S_0 \exp(\rho M_t+\sqrt{1-\rho^2}M^{\bot}_t-\frac{1}{2}\overline{X}_t),\\
		M,\, M^\bot \mbox{ are continuous martingales with } \langle M\rangle = \langle M^\bot\rangle = \overline{X}, ~\langle M, M^\bot\rangle=0 ~\mbox{and}~  M_0=M^\bot_0 =0.
	\end{cases}
	\end{equation}

	\begin{proposition}\label{prop:ConvergXn}
		Any weak limit $(X, \overline X, S, M, M^{\bot})$ of $(X^n, \overline X^n, S^n, M^n, M^{n,\bot})_{n \ge 1}$ in Corollary \ref{WeakConvergX} satisfies Equation \eqref{eq:IntegVarHeston_variation}.
	\end{proposition}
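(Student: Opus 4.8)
The plan is to pass to the limit in the discrete-time equation \eqref{eq:schemeIRV}, most of the structural content of \eqref{eq:IntegVarHeston_variation} having already been established in Corollary \ref{WeakConvergX}. First I would invoke Skorokhod's representation theorem so that, along the subsequence of Corollary \ref{WeakConvergX}, the convergence $(X^n, \overline X^n, S^n, M^n, M^{n,\bot}) \longrightarrow (X, \overline X, S, M, M^{\bot})$ holds almost surely, uniformly on $[0,T]$. At this point three of the four lines of \eqref{eq:IntegVarHeston_variation} are already available: the identity $\overline X_t = \max_{0 \le s \le t} X_s$, together with the fact that $M, M^\bot$ are continuous martingales with $\langle M \rangle = \langle M^\bot \rangle = \overline X$, $\langle M, M^\bot \rangle = 0$ and $M_0 = M^\bot_0 = 0$, are exactly the content of Corollary \ref{WeakConvergX}; and the identity $S_t = S_0 \exp(\rho M_t + \sqrt{1-\rho^2} M^\bot_t - \tfrac12 \overline X_t)$ follows from the convergence $(S^n, Y^n) \Longrightarrow (S, Y)$ recorded just before the statement, together with $Y_0 = \log S_0$. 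So the only remaining task is to identify the Volterra equation satisfied by $X$.

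For that, I would rewrite the equation for $X^n$ in \eqref{eq:schemeIRV} as $X^n_t = V_0 t + \int_0^t K(t - \eta_n(s))\, \zbf^n_s\, ds$, with $\zbf^n_s := \theta\, \eta_n(s) - \lambda\, \overline X^n_{\eta_n(s)} + \nu\, M^n_{\eta_n(s)}$, and pass to the limit by means of Lemma \ref{convOFz} (or the obvious one-dimensional analogue with the scalar kernel $K$ in place of $\hat K$). The key step is to show that $\zbf^n \longrightarrow \zbf := \theta s - \lambda\, \overline X_s + \nu\, M_s$ in $C([0,T],\R)$ almost surely, and this is where the discretization has to be handled carefully, since $\eta_n(s)$ enters both in the kernel argument and inside the running maximum and the martingale. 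Using that $\overline X$ and $M$ have continuous, hence uniformly continuous, paths, I would bound, for every $s$, $|\overline X^n_{\eta_n(s)} - \overline X_s| \le 2\,\|\overline X^n - \overline X\|_\infty + m(\overline X, \delta_n)$ and similarly for $M^n$, where $m(\cdot, \delta)$ denotes the modulus of continuity and $\delta_n = |\pi_n| \to 0$; together with $\sup_s |\eta_n(s) - s| \le \delta_n$ this gives $\|\zbf^n - \zbf\|_\infty \to 0$ a.s.

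Then Lemma \ref{convOFz} yields $\sup_{t \in [0,T]} |\int_0^t K(t - \eta_n(s))\, \zbf^n_s\, ds - \int_0^t K(t-s)\, \zbf_s\, ds| \to 0$ a.s., and combining this with the almost sure uniform convergence $X^n \to X$ I would conclude that $X_t = V_0 t + \int_0^t K(t-s)\,(\theta s - \lambda \overline X_s + \nu M_s)\, ds$ for each fixed $t$, a.s.; since both sides have continuous paths, the identity then holds for all $t \in [0,T]$ simultaneously, a.s., which completes the verification of \eqref{eq:IntegVarHeston_variation}. I expect the main obstacle to be precisely the uniform convergence $\zbf^n \to \zbf$ just described: it is the only place where the discretization of the scheme interacts non-trivially with the passage to the limit, and it relies on first transferring the weak convergence to an almost sure one via Skorokhod and then inserting the uniformly continuous limit to absorb the grid error; everything else is a direct consequence of Corollary \ref{WeakConvergX} and of Lemma \ref{convOFz}.
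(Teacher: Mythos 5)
Your proposal is correct and follows essentially the same route as the paper: Skorokhod representation to upgrade to almost sure uniform convergence, the structural properties of $\overline X$, $M$, $M^{\bot}$ and $S=\exp(Y)$ taken directly from Corollary \ref{WeakConvergX}, and identification of the Volterra equation for $X$ via Lemma \ref{convOFz}. Your explicit modulus-of-continuity argument for the uniform convergence of $s \mapsto \theta\,\eta_n(s)-\lambda \overline X^n_{\eta_n(s)}+\nu M^n_{\eta_n(s)}$ is a detail the paper leaves implicit when invoking that lemma, and it is handled correctly.
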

	\begin{proof}
	Let $(\Omega^n,\mathcal{F}^n, \F^n = (\mathcal{F}^n_t)_{t \in [0,T]}),\mathbb{P}^n)$ be a filtered probability space in which $(X^n,\overline{X}^n,M^n,S^n)$ is defined. 
	Corollary \ref{WeakConvergX} gives the existence of a subsequence $(n_{k})$ such that  $(X^{n_{k}},\overline{X}^{n_{k}},M^{n_{k}}, S^{n_{k}})$ converges, and without loss of generality we simply consider that 
	$(X^n,\overline{X}^n,M^n, S^n) \Longrightarrow (X,\overline{X},M, S)$, for some $X$ in $C([0,T],\R)$ and $M$ a martingale with quadratic variation $\overline{X}$ in a filtered probability space $(\Om, \Fc, \F= (\Fc_t)_{t \in [0,T]}, \P)$. 
	By the Skorokhod representation theorem, one can assume that $(\Om^n, \Fc^n, \P^n) = (\Om, \Fc, \P)$ (but the filtrations $\F^n$ and $\F$ are different a priori)
	such that $({X}^n,{\overline{X}^n}, {M}^n,{S}^n) \longrightarrow ({X},{\overline{X}},{M},{S})$ almost surely in $C([0,T],\R)^4$. 
	From Lemma \ref{convOFz}, one has 
	$$
            	\int_0^t K(t-\ens)(\theta \ens-\lambda {\overline{X}^n}_{\ens}+\nu {M}^n_{\ens})~ds 
		\longrightarrow 
		\int_0^t K(t-s)(\theta s-\lambda {\overline{X}}_s+\nu {M}_s)~ds            
	$$ 
	uniformly in $t\in [0,T]$, almost surely.
	Therefore, $(X,S,M, M^{\bot})$ solves \eqref{eq:IntegVarHeston_variation}.
	\end{proof}

\subsection{Equivalence and uniqueness of weak solutions}
\label{subsec:equivalence}

	We will adapt the ideas in the proof of \cite[Theorem A.1]{abi2019markovian}, 
	in order to prove that the process $V$ in a (weak) solution of \eqref{eq:RoughHeston_variation} is non-negative,
	and that the process $X$ in a (weak) solution of \eqref{eq:IntegVarHeston_variation} is  non-decreasing.
	Consequently, Equation \eqref{eq:RoughHeston_variation} shares the same unique weak solution as \eqref{eq:RoughHeston},
	and Equation \eqref{eq:IntegVarHeston_variation} shares the same unique weak solution as \eqref{eq:IntegVarHeston}.
	
	\vspace{0.5em}
	
	Recall that the resolvent $L$ of the first kind of the kernel function $K$ is defined in \eqref{eq:def_resolvent},
	together with the definition of the convolution:
	$$
		(L^**K)(t)
		=
		(K*  L^*) (t) 
		:= \int_{[0,t]} K(t-s)  L^*(ds),
		~~\mbox{for a finite measure}~ L^* ~\mbox{on}~[0,T].
	$$
	Let us fix $K(t) := K(T)$ for $t \ge T$,
	and denote $\Delta_h K( \cdot)=K(h+ \cdot)$.

	\begin{proposition}\label{Prop:TwoModifiedModels}
		\noindent(i) Let $V$ be a (weak) solution to the following equation
		\begin{equation}\label{eq:ModifiedRoughHeston}
		\begin{split}
			V_t
			~=~
			V_0
			+
			\int_0^t K(t-s) \big( \theta -\lambda (V_s)_+ \big)~ds
			+
			\int_0^t K(t-s)\nu \sqrt{(V_s)_+}~dW_s,
		\end{split}
		\end{equation}
		where $x_+=max(x,0)$.
		Then $V$ is non-negative on $[0,T]$.

		\vspace{0.5em}

		\noindent(ii) Let $X$ be a (weak) solution to the following equation
		\begin{equation}\label{eq:modifiedHyperModel}
		\begin{split}
			X_t
			~=~
			V_0 t
			+
			\int_0^t K(t-s) \big( \theta s-\lambda \overline{X}_s \big)~ds
			+
			\int_0^t K(t-s)\nu M_s~ds,
		\end{split}
		\end{equation}
		where $\overline{X_t}=\sup\limits_{s\in [0,t]}X_s$, $M_s$ is a continuous martingale with quadratic variation $\langle M\rangle _s=\overline{X}_s$ and $M_0=0$.
		Then $X$ is non-decreasing on $[0,T]$.
	\end{proposition}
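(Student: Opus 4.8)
The plan is to follow the strategy of \cite[Theorem A.1]{abi2019markovian}, exploiting the resolvent of the first kind $L$ to ``invert'' the kernel and turn the Volterra equation into a form where the sign (resp. monotonicity) can be read off directly. I will treat (i) and (ii) in parallel, since the arguments are structurally the same; (ii) is the one requiring the most care because of the running maximum $\overline X$ and the martingale term $\int_0^t K(t-s)\nu M_s\,ds$.

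For part (i): set $F_t := \int_0^t K(t-s)\big(\theta - \lambda (V_s)_+\big)\,ds + \int_0^t K(t-s)\nu\sqrt{(V_s)_+}\,dW_s$, so that $V_t = V_0 + F_t$, and note $F$ has a version with continuous paths and $F_0 = 0$. The key identity is that convolving against $L$ recovers the ``driving'' increments: writing things formally, one wants to show $V$ can be written as $V_t = V_0\,(1 - L([0,t])) + \big(L * (\text{something nonnegative})\big)_t$ up to the stochastic term, using the associativity $(K * L) = 1$ and the Fubini-type exchange $\int_{[0,t]} (K * g)(t-u)\,L(du) = \int_0^t g(s)\,ds$. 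Concretely I would convolve the equation for $V$ with $L$, apply the stochastic Fubini theorem to the $dW$ term (legitimate since $K \in L^2$ and $(V)_+$ has moments), and use that $s \mapsto L([s,s+t])$ is non-increasing together with $\Delta_h K \ge 0$ to control the boundary terms that arise from $K$ not vanishing at $0$. After this manipulation one obtains, for any fixed $t$, a representation of $V_t$ (or of $\int_0^{t} V_s\,ds$, or of $V$ smoothed against $L$) as a sum of terms each of which is manifestly $\ge 0$ on the event $\{V_s \ge 0 \text{ for } s \le t\}$ — the point being that the coefficient $\theta - \lambda(V_s)_+$ and the diffusion $\sqrt{(V_s)_+}$ only ever see the positive part. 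A stopping-time / first-passage argument then closes it: let $\tau := \inf\{t : V_t < 0\}$; on $[0,\tau]$ all relevant quantities involve only $(V_s)_+ = V_s$, the resolvent representation forces $V_\tau \ge 0$ (in fact a strict-positivity-type estimate near $\tau$), contradicting the definition of $\tau$ unless $\tau = T$ (or $\infty$), i.e. $V \ge 0$ throughout. One must handle the measure-theoretic subtlety that $L$ may have an atom at $0$; the monotonicity assumption on $s \mapsto L([s,s+t])$ is exactly what is needed there, as in \cite{abi2019markovian}.

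For part (ii): here I would differentiate/telescope in $t$ rather than argue pointwise. The goal is to show $X_{t+h} - X_t \ge 0$ for all $t$ and $h > 0$. Write $X_{t+h} - X_t$ using \eqref{eq:modifiedHyperModel}: it splits into $V_0 h$, plus $\int_0^{t+h} K(t+h-s)(\cdots)\,ds - \int_0^t K(t-s)(\cdots)\,ds$, which reorganizes (adding and subtracting) into a ``new-mass'' integral $\int_t^{t+h}K(t+h-s)\,(\theta s - \lambda \overline X_s + \nu M_s)\,ds$ plus a ``kernel-increment'' integral $\int_0^t \big(K(t+h-s) - K(t-s)\big)(\theta s - \lambda \overline X_s + \nu M_s)\,ds$; since $K$ is non-increasing, $K(t+h-s) - K(t-s) \le 0$, which controls the sign contribution of that second piece. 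Then I would again convolve with the resolvent $L$ to eliminate $K$: the crucial computation is to show that after this operation the increment of $X$ equals $V_0\cdot(\text{nonnegative}) + \big(\text{integral against }L\text{ of }(\theta s - \lambda \overline X_s + \nu M_s)\big)$ over an appropriate window, and then to observe that on the set where $X$ has not yet failed to be monotone, $\overline X_s = \sup_{r \le s} X_r$ can be bounded in terms of $X$ in a way that makes the drift $\theta s - \lambda \overline X_s$ behave; the martingale term $\nu M_s$ is handled by a localization/optional-stopping argument since $\langle M \rangle = \overline X$ is itself controlled by $X$. As in (i), introduce the first time $X$ decreases, $\sigma := \inf\{t : \exists\, h>0,\ X_{t+h} < X_t\}$, and derive a contradiction from the resolvent representation evaluated at $\sigma$.

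The main obstacle will be part (ii): unlike the classical situation where the approximating processes are monotone by construction, here $\overline X$ appears both in the drift (through $-\lambda \overline X_s$) and as the quadratic variation of $M$, so the resolvent inversion does not immediately produce a sign — one has to carefully track how $\overline X_s$ compares to $X_s$ before monotonicity is established, which is a bootstrap. I expect the clean way around this is exactly the stopping-time device: up to the stopping time $\sigma$ one has $\overline X_s = X_s$ (monotone so far means the running max equals the current value — actually one needs $\overline X_{s} = X_{s}$ on $[0,\sigma)$ only if $X$ is non-decreasing there, which is the induction hypothesis), so on $[0,\sigma)$ the equation \eqref{eq:modifiedHyperModel} collapses to an equation involving only $X$ itself and $M$ with $\langle M\rangle = X$, at which point it is precisely (a reparametrization of) a classical CIR/rough-Heston Volterra equation whose solution is known to be non-decreasing by the resolvent argument of \cite{abi2019markovian}; pushing this past $\sigma$ then gives $\sigma = T$. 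Verifying that this collapse is rigorous, including the stochastic Fubini and atom-at-zero issues, is where the real work lies; the rest is bookkeeping with \eqref{eq:def_resolvent} and Assumption \ref{assum:main}.
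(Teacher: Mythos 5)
Your overall strategy for (i) (invert the kernel via the resolvent of the first kind, use the lemma of \cite{abi2019markovian}, and argue by a first-passage contradiction) is indeed the paper's strategy, but as written the argument does not close, for two concrete reasons. First, with $\tau:=\inf\{t: V_t<0\}$ continuity gives $V_\tau=0$, so ``the resolvent representation forces $V_\tau\ge 0$'' contradicts nothing; the contradiction must be produced at a later time $\tau+h$ where $V_{\tau+h}<0$, and at level $0$ you cannot guarantee that $V\le 0$ on the whole interval $[\tau,\tau+h]$ (the path may oscillate around $0$ immediately after $\tau$, and the start of the first genuinely negative excursion is a last-exit time, not a stopping time). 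This is exactly why the paper works with $\tau_n:=\inf\{t: V_t<-\tfrac1n\}$: there $V_{\tau_n}=-\tfrac1n<0$, so a whole right-neighbourhood stays $\le 0$ while the path still dips below $-\tfrac1n$, and the contradiction is $V_{\tau_n+h}\ge-\tfrac1n$. Second, and more importantly, the decisive ingredient never appears in your sketch: on an interval where $V\le 0$ one has $(V_s)_+=0$, hence the diffusion coefficient $\nu\sqrt{(V_s)_+}$ vanishes identically and the drift is $\theta\ge 0$, so the ``future'' term $\int_{\tau_n}^{\tau_n+h}K(\tau_n+h-s)\,dZ_s$ is pathwise non-negative. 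Without this observation the stochastic integral near the crossing has no sign, and no resolvent manipulation (your claim that the terms are nonnegative ``on the event $\{V_s\ge 0 \text{ for } s\le t\}$'' concerns the past, which is the harmless part) can give the lower bound; the paper's proof combines this with Lemma B.2 of \cite{abi2019markovian} applied to $\Delta_hK$ and the bounds $0\le(\Delta_hK*L)\le 1$, $(\Delta_hK*L)$ non-decreasing.

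For (ii) the gap is larger, since your route differs from the paper's and its key steps fail. The sign argument ``$K(t+h-s)-K(t-s)\le 0$ controls that piece'' gives nothing because the integrand $\theta s-\lambda\overline X_s+\nu M_s$ has no definite sign; your $\sigma:=\inf\{t:\exists h>0,\ X_{t+h}<X_t\}$ looks into the future and is not a stopping time; and ``pushing past $\sigma$'' after the claimed collapse to a classical equation on $[0,\sigma)$ is precisely the part that is not proved, while ``localization/optional stopping'' cannot sign a Lebesgue integral of $M$ against $K$. The paper's proof of (ii) instead reduces it to (i): by the stochastic Fubini theorem, \eqref{eq:modifiedHyperModel} rewrites as $X_t=\int_0^t\big(V_0+\int_0^sK(s-r)\,dZ_r\big)ds$, so $X$ is absolutely continuous; setting $V_t:=\frac{dX_t}{dt}$ and representing $M$ as $\int_0^\cdot\sqrt{\tfrac{d\overline X_r}{dr}}\,d\hat W_r$ for a Brownian motion $\hat W$, one finds that $V$ solves a Volterra equation whose coefficients depend on $\tfrac{d\overline X_t}{dt}$, and the elementary but crucial remark that $V_t<0$ implies $\tfrac{d\overline X_t}{dt}=0$ puts this equation in the form treated in (i) (diffusion coefficient vanishing wherever $V<0$). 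The argument of (i) then yields $V\ge 0$, hence $X=\int_0^\cdot V_s\,ds$ is non-decreasing. This reduction via absolute continuity and the time-changed Brownian representation is the missing idea in your proposal.
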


	\begin{proof}
	(i)
	Define the stopping time $\tau_n :=\inf\{t:V_t < -\frac{1}{n}\}$. 
	Assume that the set $\{ \tau_n < T \}$ is nonempty,
	then for every fixed $\omega \in \{\tau_n < T\}$, there exists $h > 0$ such that
	\begin{equation} \label{eq:V_tauh_le_1n}
		V_{\tau_n +h} < -\frac1n,
		~~\mbox{and}~
		V_s \leq 0
		~\mbox{for all}~
		s \in [\tau_n, \tau_n+h].
	\end{equation}
	Denote 
	$$
		Z_s := \int_0^s b((V_s)_+)~ds+\int_0^t \sigma((V_s)_+)~dW_s,
		~\mbox{with}~
		b(v) := \theta - \lambda v_+,
		~~
		\sigma(x) = \nu \sqrt{x_+}.
	$$
	It follows that
	\begin{equation*} 
		V_{t+h}
		~=~
		V_0+(K*dZ)_{t+h}
		~=~
		V_0+(\Delta_h K*dZ)_t+Y_t,
	\end{equation*}    
	where 
	$$
		Y_t=\int_t^{t+h}K(t+h-s)~dZ_s,~t\geq 0.
	$$
	By (3.7) of \cite{abi2019affine}, one has for any stopping time $\tau <\infty$, 
	$$
		Y_{\tau }=\int_{\tau }^{\tau+h} K(\tau +h-s)~dZ_s,~\mbox{a.s.}
	$$
	Then one has
	$$
		V_{\tau_n +h} ~=~ V_0+(\Delta_h K*dZ)_{\tau_n}+\int_{\tau_n}^{\tau_n+h}K(\tau_n+h-s)~dZ_s.
	$$
	Recall also from \cite[Lemma B.2]{abi2019markovian} that for any $F\in L^1([0,T])$ such that $F*L$ is right-continuous and of bounded variation, one has 
	$$
		F(t) = (F*L)(0) K(t)
		+
		\big( K* (d(F*L)) \big)(t),
		~\mbox{for a.e.}~t \in [0,T].
	$$
	Then for $F = \Delta_h K$ and $L$ the resolvent of the first kind of $K$, $\Delta_h K *L$ is right-continuous and of bounded variation (see \cite[Remark B.3.]{abi2019markovian}). Thus one has
	\begin{equation*}
	\begin{split}
		V_{\tau_n +h}  
		~=~&
		V_0+(\Delta_h K*L)(0) \big( V_{\tau_n}-V_0 \big) + \big(d(\Delta_h K*L)*(V-V_0)\big)_{\tau_n}+\int_{\tau_n}^{\tau_n+h}K(\tau_n+h-s)~dZ_{s}\\
		~=~&
		V_0 ~+~ V_{\tau_n} (\Delta_h K*L)(0) 
		~+ 
		\int_0^{\tau_n} V_{\tau_n -s} d \big(\Delta_h K *L\big) (s) 
		~-~
		V_0 ~\big( \Delta_h K *L \big)(\tau_n) \\
		&~+ \int_{\tau_n}^{\tau_n+h}K(\tau_n+h-s)~dZ_{s}.
	\end{split}
	\end{equation*}
	Notice that $V_{\tau_n} = -\frac1n$, 
	$V_{\tau_n-s} \ge -\frac1n$ for $s \in [0, \tau_n]$,
	and $(\Delta_hK*L)$ is non-negative and non-decreasing 
	(see \cite[Remark B.3.]{abi2019markovian}). It follows that
	\begin{align*}
		V_{\tau_n +h} 
		~\ge~&
		V_0 ~-~ \frac1n (\Delta_hK*L)(0)  ~-~ \frac1n (\Delta_hK*L)(\tau_n) ~+~ \frac1n (\Delta_hK*L)(0) \\
		&~-~ V_0 (\Delta_h K*L)(\tau_n) 
		~+~
		\int_{\tau_n}^{\tau_n+h} \!\!\! K(\tau_n+h-s)~dZ_{s}.
	\end{align*}
	Further, notice that for all $t \in [0,T]$,
	\begin{align*}
		(\Delta_h K * L)(t) 
		~=~
		\int_0^t K(t+h-s)L(ds)
		~\le~
		\int_0^{t+h}K(t+h-s)L(ds)
		~=~
		(\Delta_h K*L)(t+h)
		~=~
		1,
	\end{align*}
	since $K$ and $L$ are non-negative.
	Then
	$$
		V_{\tau_n +h} 
		~\ge~
		- \frac1n (\Delta_hK*L)(\tau_n)
		+ 
		\int_{\tau_n}^{\tau_n+h}K(\tau_n+h-s)~dZ_{s}
		~\ge~
		-\frac1n
		+\int_{\tau_n}^{\tau_n+h}K(\tau_n+h-s)~dZ_{s}.
	$$
	Finally, since $b((V_s)_+)\geq 0$, $\sigma((V_s)_+)= 0$ for all $s\in [\tau_n,\tau_n+h]$,
	it follows that 
	$$ 
		\int_{\tau_n}^{\tau_n+h}K(\tau_n+h-s)~dZ_s
		~=~
		\int_{\tau_n}^{\tau_n+h} K(\tau_n+h-s)
			\Big( b((V_s)_+)~ds
			+
			\sigma( (V_s)_+)~dW_s \Big)
		~\geq~
		0,
	$$
	and hence $V_{\tau_n +h} \ge - \frac{1}{n}$, which is a contradiction to the fact that $V_{\tau_n + h} < -\frac1n$ in \eqref{eq:V_tauh_le_1n}.
	Therefore, $\P[ \tau_n  < T ] = 0$.
	Since $n\ge 1$ is arbitrary, it follows that $V_t \geq 0$ for any $t\in[0,T]$, a.s.
	        
        \vspace{0.5em}
        
        \noindent (ii) For equation \eqref{eq:modifiedHyperModel}, denote $Z_s := \theta s-\lambda \overline{X}_s+\nu M_s$, 
	where $M$ is a continuous martingale with quadratic variation $\langle M \rangle_s=\overline{X}_s,~M_0=0$. 
	We follow the proof of \cite[Lemma 2.1]{abi2021weak} and apply the stochastic Fubini theorem (see e.g. \cite[p.175]{RevuzYor}) to obtain that
	\begin{equation*}
	\begin{split}
		X_t 
		&~=~
		V_0 t+\int_0^t  K(t-s)Z_s~ds
		~=~
		V_0 t+\int_0^t K(s)\Big(\int_0^{t-s}~dZ_r\Big)~ds\\
		&~=~
		V_0 t+\int_0^t\Big(\int_0^{t-r}K(s)~ds\Big) ~dZ_r
		~=~
		V_0 t+ \int_0^t\Big(\int_0^t K(s-r)\textbf{1}_{r\leq s}~ds\Big) ~dZ_r\\
		&~=~
		V_0 t+\int_0^t\Big(\int_0^sK(s-r)~dZ_r\Big)~ds.
            \end{split}
	\end{equation*}
	Then $X$ is absolutely continuous in $t$ so that both $\frac{d {X}_t}{dt}$ and $\frac{d\overline{X}_t}{dt}$ are well defined, and
	$$
		X_t
		~=~
		\int_0^t \Big(V_0+ \int_0^s K(s-r)b\big(\frac{d\overline{X}_r}{dr}\big)~dr+\int_0^s K(s-r)\nu \sqrt{\frac{d\overline{X}_r}{dr}}~d \hat W_r\Big)~ds,
	$$
	with $b(v) := \theta - \lambda v_+$ and for some Brownian motion $\hat W$.
	Let us define $V_t :=   \frac{d {X}_t}{dt}$, then $V$ satisfies
	$$
		V_t
		~=~
		V_0+ \int_0^t K(t-s)b\big(\frac{d\overline{X}_s}{ds}\big)~ds+\int_0^t K(t-s) \nu \sqrt{\frac{d\overline{X}_s}{ds}}~d \hat W_s.
	$$ 
	Notice that $V_t < 0$ implies that $\frac{d\overline{X}_t}{dt} = 0$,
	then we can apply the same arguments as in (i) to deduce that $ V_t\geq 0$ for any $t\in [0,T]$.
        Since $X_t=\int_0^t  V_s~ds$, this proves that all solutions to equation \eqref{eq:modifiedHyperModel} are non-decreasing over [0,T].
	\end{proof}

\subsection{Proofs of Theorems \ref{thm:CvgRoughHeston} and \ref{thm:CvgIntegVariance}}
	
	In view of Proposition \ref{Prop:TwoModifiedModels}, any weak solution $V$ to \eqref{eq:ModifiedRoughHeston} is non-negative.
	Therefore, a weak solution to \eqref{eq:RoughHeston_variation} is also a weak solution to \eqref{eq:RoughHeston}, which is unique (see Remark \ref{rem:uniqueness_RoughHeston}).
	One then proves Theorem \ref{thm:CvgRoughHeston} by Proposition \ref{prop:MartingaleProb} and Remark \ref{rem:MartingalePb}.
	
	\vspace{0.5em}

	Again by Proposition \ref{Prop:TwoModifiedModels}, any weak solution $X$ to \eqref{eq:modifiedHyperModel} is non-decreasing.
	Therefore, a weak solution to \eqref{eq:IntegVarHeston_variation} is also a weak solution to \eqref{eq:IntegVarHeston}, which is unique (see Remark \ref{eq:uniqueness_VarHeston}).
	One can then prove Theorem \ref{thm:CvgIntegVariance} by Proposition \ref{prop:ConvergXn}.
	\qed

		

\appendix

\section{Tables of simulation results}

\begin{table}[!htbp]
    \centering
    \begin{tabular}{|c||c|c|c||c|c|c|}
    \hline
         & Mean Value &Stat. Error & Comp. Time&Mean Value&Stat. Error&Comp. Time 
         \\ \hline

        Ref.  & 0.056832&-&-&-&-&-
        \\ \hline

         n=4 &0.059642 & 0.000245 & 7.856192&0.065483 & 0.000279 & 7.050599
        \\ \hline

          n=10 &0.058905 & 0.000238 & 17.419916&0.059996 & 0.000244 & 16.307883
        \\ \hline

      n=20 &0.058630 & 0.000234 & 35.061742&0.058635 & 0.000234 & 31.698260
        \\ \hline

      n=40 &0.058344 & 0.000232 & 70.921679&0.057363 & 0.000228 & 60.035371
        \\ \hline

     n=80 &0.058280 & 0.000232 & 129.678460&0.056967 & 0.000225 & 123.884151
        \\ \hline

     n=160&0.057965 & 0.000230 & 266.368216&0.056905 & 0.000225 & 247.920679
        \\ \hline
    n=320&0.057858 & 0.000229 & 582.703205&0.056897 & 0.000225 & 482.391579
        \\ \hline
        
	\end{tabular}
	\caption{European call $(S_T - K)_+$ option price estimation with Scheme \eqref{eq:SchemeRoughHeston}  (left) and Scheme \eqref{eq:schemeIntegRough} (right). The computation time is in second.} 
	\label{tab:EuropCall}
\end{table}

\begin{table}[!htbp]
    \centering
    \begin{tabular}{|c||c|c|c||c|c|c|}
    \hline
         & Mean Value &Stat. Error & Comp. Time & Mean Value & Stat.Error & Comp. Time
         \\ \hline

        Ref.  & -&-&-&-&-&-
        \\ \hline

       n=10 &0.036363 & 0.000145 & 18.126627&0.038373 & 0.000155 & 16.334577
        \\ \hline

        n=20 &0.034653 & 0.000136 & 34.257258&0.034992 & 0.000138 & 31.231823
        \\ \hline

         n=40 &0.033647 & 0.000132 & 68.872156&0.033387 & 0.000130 & 61.623905
        \\ \hline

      n=80 &0.033266 & 0.000130 & 144.862723&0.032474 & 0.000126 & 121.671897
        \\ \hline

         n=160&0.032915 & 0.000128 & 292.438511&0.032230 & 0.000125 & 239.012198

        \\ \hline
       n=320 &0.032479 & 0.000127 & 563.534848& 0.031907 & 0.000124 & 450.094275
        \\ \hline
        
	\end{tabular}
	\caption{Asian option $(A_T - K)_+$ price estimation with Scheme \eqref{eq:SchemeRoughHeston}  (left) and Scheme \eqref{eq:schemeIntegRough} (right). The computation time is in second.}
	\label{tab:Asian}
\end{table}

\begin{table}[!htbp]
    \centering
    \begin{tabular}{|c||c|c|c||c|c|c|}
    \hline
         & Mean Value &Stat. Error & Comp. Time & Mean Value &Stat. Error &Comp. Time
         \\ \hline

        Ref.  & -&-&-&-&-&-
        \\ \hline

       n=10 &0.081591 & 0.000234 & 18.394217&0.076769 & 0.000238 & 17.260538
        \\ \hline

       n=20 &0.085409 & 0.000228 & 33.485885&0.079742 & 0.000226 & 33.581574
        \\ \hline

   n=40 &0.088502 & 0.000226 & 66.453965& 0.083297 & 0.000222 & 64.918632
        \\ \hline

      n=80&0.090927 & 0.000225 & 130.933455&0.086043 & 0.000219 & 124.431435
        \\ \hline

         n=160&0.092118 & 0.000223 & 290.638306&0.088685 & 0.000219 & 246.881069

        \\ \hline
        n=320&0.092904 & 0.000222 & 558.946791&0.090338 & 0.000219 & 492.821703
        \\ \hline
        
	\end{tabular}
	\caption{Lookback option $(M_T - K)_+$ price estimation with Scheme \eqref{eq:SchemeRoughHeston}  (left) and Scheme \eqref{eq:schemeIntegRough} (right). The computation time is in second.}
	\label{tab:Lookback}
\end{table}

\begin{table}[!htbp]
    \centering
    \begin{tabular}{|c||c|c|c||c|c|c|}
    \hline
         & Mean Value &Stat. Error & Comp. Time & Mean Value &Stat. Error &Comp. Time
         \\ \hline

        Ref.  & 0.028295&-&-&-&-&-
        \\ \hline

        n=4&0.033967 & 0.000080 & 8.389909&0.033565 & 0.000090 & 8.291969
\\ \hline
       n=10 &0.030781 & 0.000085 & 17.706663&0.030522 & 0.000098 & 17.645600
        \\ \hline

       n=20 &0.029736 & 0.000088 & 33.061506&0.029387 & 0.000098 & 31.253372
        \\ \hline

   n=40 &0.029218 & 0.000091 & 69.696746& 0.028756 & 0.000098 & 59.500006
        \\ \hline

      n=80&0.029165 & 0.000093 & 132.443006&0.028527 & 0.000097 & 123.055661
        \\ \hline

         n=160&0.028875 & 0.000095 & 267.132785&0.028477 & 0.000098 & 248.295026

        \\ \hline
        n=320&0.028685 & 0.000094 & 575.724998&0.028328 & 0.000098 & 486.069715
        \\ \hline
        
	\end{tabular}
	\caption{Estimation of $\EE[X_T]$  with Scheme \eqref{eq:SchemeRoughHeston}  (left) and Scheme \eqref{eq:schemeIntegRough} (right). The computation time is in second.}
	\label{tab:VarianceSwap}
\end{table}

\begin{table}[!htbp]
    \centering
    \begin{tabular}{|c||c|c|c||c|c|c|}
    \hline
         & Mean Value &Stat. Error & Comp. Time & Mean Value &Stat. Error &Comp. Time
         \\ \hline

        Ref.  & 0.013517&-&-&-&-&-
        \\ \hline

        n=4&0.016940 & 0.000074 & 8.425556&0.017096 & 0.000081 & 7.537352
\\ \hline
       n=10 &0.014542 & 0.000076 & 18.414490&0.015131 & 0.000088 & 15.891882
        \\ \hline

       n=20 &0.014043 & 0.000079 & 36.577253&0.014254 & 0.000088 & 30.900766
        \\ \hline

   n=40 &0.013752 & 0.000082 & 70.948186& 0.013905 & 0.000088 & 59.677628
        \\ \hline

      n=80&0.013841 & 0.000084 & 144.526883&0.013770 & 0.000088 & 118.789620
        \\ \hline

         n=160&0.013705 & 0.000085 & 273.132594&0.013775 & 0.000088 & 230.445799

        \\ \hline
        n=320&0.013641 & 0.000085 & 585.570704&0.013600 & 0.000088 & 471.164119
        \\ \hline
        
	\end{tabular}
	\caption{Estimation of $\EE[((X_T-V_0)_+]$  with Scheme \eqref{eq:SchemeRoughHeston}  (left) and Scheme \eqref{eq:schemeIntegRough} (right). The computation time is in second.}
	\label{tab:VarianceCall}
\end{table}

\newpage

\bibliographystyle{abbrvnat}{}
\bibliography{RoughHeston}

\end{document}